\newcommand*{\fullref}[1]{\hyperref[{#1}]{\ref*{#1} \nameref*{#1}}} %
\newcommand{\N}{\mathbb{N}}
\newcommand{\R}{\mathbb{R}}
\newcommand{\beq}{\begin{equation*}}		%
\newcommand{\eeq}{\end{equation*}}			%
\newcommand{\beqn}{\begin{equation}}		%
\newcommand{\eeqn}{\end{equation}}			%
\newcommand{\vn}[1]{\| #1 \|}
\newcommand{\st}{\quad{\text{s.t.}}\quad}
\DeclareMathOperator*{\supp}{supp}
\theoremstyle{definition}
\newtheorem{definition}{Definition}[section]
\newtheorem{example}[definition]{Example}
\theoremstyle{remark}
\newtheorem{remark}[definition]{Remark}
\theoremstyle{plain}
\newtheorem{proposition}[definition]{Proposition}
\newtheorem{lemma}[definition]{Lemma}
\newtheorem{theorem}[definition]{Theorem}
\newtheorem*{theorem*}{Theorem}
\newtheorem{corollary}[definition]{Corollary}
\newtheorem*{claim*}{Claim}
\definecolor{tud1a}{RGB}{93,133,195}
\definecolor{tud1b}{RGB}{0,90,169}
\definecolor{tud2a}{RGB}{0,156,218}
\definecolor{tud2b}{RGB}{0,131,204}
\definecolor{tud2c}{RGB}{0,104,157}
\definecolor{tud2d}{RGB}{0,78,115}
\definecolor{tud3a}{RGB}{80,182,149}
\definecolor{tud3b}{RGB}{0,157,129}
\definecolor{tud3c}{RGB}{0,136,119}
\definecolor{tud3d}{RGB}{0,113,94}
\definecolor{tud4a}{RGB}{175,204,80}
\definecolor{tud4b}{RGB}{153,192,0}
\definecolor{tud4c}{RGB}{127,171,22}
\definecolor{tud4d}{RGB}{106,139,55}
\definecolor{tud5b}{RGB}{201,212,0}
\definecolor{tud5d}{RGB}{153,166,4}
\definecolor{tud7b}{RGB}{210,135,0}
\definecolor{tud8a}{RGB}{238,122,52}
\definecolor{tud8b}{RGB}{236,101,0}
\definecolor{tud8c}{RGB}{204,76,3}
\definecolor{tud9a}{RGB}{233,80,62}
\definecolor{tud10a}{RGB}{201,48,142}
\definecolor{tud10b}{RGB}{166,0,132}
\definecolor{tud10c}{RGB}{149,17,105}
\definecolor{tud10d}{RGB}{115,32,84}
\definecolor{tud11a}{RGB}{128,69,151}
\definecolor{tud11b}{RGB}{114,16,133}
\definecolor{tud11c}{RGB}{97,28,115}
\definecolor{tud11d}{RGB}{76,34,106}
\newcommand{\cclc}[2]{\mathcal{L}^{CC}_#1(#2)}
\newcommand{\cccc}[2]{\mathcal{C}^{CC}_#1(#2)}
\numberwithin{equation}{section} 
\begin{document}

\thispagestyle{empty}

\ \bigskip

\begin{center}
   {\Large\bf Second Order Optimality Conditions and Improved Convergence Results for a Scholtes-type Regularization for a Continuous Reformulation of Cardinality Constrained Optimization Problems}
\end{center}

\bigskip

\begin{center}
{\large Max Bucher$^1$ and Alexandra Schwartz$^1$}
\end{center}

\medskip

{\large
\noindent \hskip4.2cm $^1$Technische Universit\"at Darmstadt 

\noindent \hskip4.2cm Graduate School Computational Engineering

\noindent \hskip4.2cm Dolivostra{\ss}e 15

\noindent \hskip4.2cm 64293 Darmstadt, Germany

\noindent \hskip4.2cm e-mail: \{bucher, schwartz\}@gsc.tu-darmstadt.de%
}
\bigskip

\begin{center}
	5th September 2017
\end{center}

\vfill

\begin{abstract}
	We consider nonlinear optimization problems with cardinality constraints.
	Based on a continuous reformulation we introduce second order necessary and sufficient optimality conditions.
	Under such a second order condition, we can guarantee local uniqueness of M-stationary points.
	Finally, we use this observation to provide extended local convergence theory for a Scholtes-type relaxation method for cardinality constrained optimization problems, which guarantees the existence and convergence of the iterates under suitable assumptions.
\end{abstract}

\bigskip %

\noindent {\bf Key Words:}
Cardinality constraints, strong stationarity, Mordukhovich stationarity, second order optimality conditions, regularization method, Scholtes regularization

\newpage

\setcounter{tocdepth}{1}
\section{Introduction}

In this article we consider cardinality constrained optimization problems of the form
\begin{equation}\label{eq:ccproblem}
	\min\limits_{x\in\R^n}\ f(x) \st x\in X,\ \vn{x}_0\leq \kappa,
\end{equation}
i.e. optimization problems that have, in addition to standard constraints $x \in X$, a bound $\kappa$ on the maximum number of nonzero components of $x$.

Problem \eqref{eq:ccproblem} can be used to model questions from a wide range of areas in science and industry.
Among its applications are the compressed sensing technique \cite{CW2008},
the subset selection problem in regression \cite{Miller2002},
support vector machines \cite{EWS2003},
cash management in automatic teller machines \cite{Galati2009},
lot sizing \cite{GK2013}
and portfolio optimization with constraints on the maximum number of assets \cite{B96}.

The cardinality constraint makes \eqref{eq:ccproblem} hard to solve: Despite its notation, $\vn{\cdot}_0$ is neither a norm nor a continuous mapping. 
Testing feasibility of \eqref{eq:ccproblem} is known to be NP-complete \cite{B96}.

For this reason in \cite{B96} problem \eqref{eq:ccproblem} is reformulated using binary auxiliary variables for the case where $X$ is polyhedral and $f$ a quadratic function,
which leads to the application of methods from discrete optimization \cite{B96,BS2009,LLRSS2012,MuSh2011}.
In \cite{LW2014} support recovery via nonconvex regularization is discussed.
For the special case $X=\R^n$ optimality conditions for \eqref{eq:ccproblem} from continuous optimization and algorithms are investigated in \cite{BE2013}.
In \cite{PXF2017} first and second order optimality conditions for \eqref{eq:ccproblem} are given.
These are formulated using the original cardinality constraint and suitable normal cones of the corresponding feasible set.

A recent approach is the reformulation of \eqref{eq:ccproblem} into a continuous optimization problem using orthogonality-type constraints.
This connection has been established in \cite{BKS2015} and \cite{FMPSW2013} and further studied in \cite{CKS2016,BBCS2017,GY2016}.
A similar reformulation for chance constrained optimization problems is discussed in \cite{Adam2016}, while in \cite{CWZ2016} penalization techniques for cardinality constraint optimization problems arising in the context of chance constraints are investigated.

While the reformulated optimization problem is continuous, the orthogonality-type constraints still pose difficulties which prevent a  direct application of methods from nonlinear optimization.
Most conditions that ensure that a local solution  satisfies first order optimality conditions, such as the well know linear independence constraint qualification, do not hold. 
The continuous reformulation bears a strong similarity to a mathematical program with complementarity constraints (MPCC).
This class of mathematical programs also violates most standard constraint qualifications.
For this reason a broad theory on MPCCs was developed, including custom constraint qualifications, stationary conditions and numerical methods.
For an overview of the subject of MPCCs (closely related to mathematical programs with equilibrium constraints (MPEC)) see \cite{LPR1996,OKZ1998}  and the references therein.
However, the continuous reformulation can not be embedded in the MPCC setting directly since it lacks the constraint $x\geq0$.
Additionally, it violates most of the MPCC-constraint qualifications, as argued in \cite{CKS2016}.
For this reason custom constraint qualifications and stationary conditions for the continuous reformulation were introduced \cite{BKS2015,CKS2016}.

In this article we present second order optimality conditions for the continuous reformulation, which make use of the aforementioned constraint qualifications and stationary conditions.
We prove both a necessary and a sufficient second order optimality condition for S-stationary points, which complement the first order optimality conditions in \cite{BKS2015,CKS2016}.
For M-stationary points, we prove a result for their uniqueness regarding the variable $x$ of the original problem \eqref{eq:ccproblem} also using a second order condition.
Compared to the second order optimality conditions from \cite{PXF2017}, the benefit of an analysis of the continuous reformulation are optimality conditions which are numerically exploitable with nonlinear programming methods as done for example in \cite{BBCS2017,BKS2015,FMPSW2013}.
Similar results for MPCCs and mathematical programs with vanishing constraints (MPVCs) can be found in \cite{GLY2013,HK2007,LPR1996,SS2000}.
For the classic results on nonlinear programs see for example \cite{NW2006}.

Moreover, we expand the convergence theory of a Scholtes-type regularization for the continuous reformulation.
It was shown in \cite{BBCS2017} that, under a Mangasarian-Fromowitz-type constraint qualification, KKT conditions of suitable regularized programs hold at a local minimum, and the limit of KKT points is an S-stationary point.
However, the question whether these regularized programs posses a solution is open. 
Using our results on second order optimality conditions, we illuminate the convergence properties of this regularization:
In a vicinity of a strict solution of \eqref{eq:ccproblem} the regularized programs have a solution. 
Moreover, the regularized programs have a solution close to a solution of the  continuous reformulation, if the cardinality constraint is active.
This leads to our main result regarding the Scholtes-regularization, which states that the limit point of a sequence of KKT points of the regularized programs is locally unique, provided a second order condition holds.

The remainder of the paper is structured as follows:
In the next section we introduce the continuous reformulation of the cardinality constrained problem \eqref{eq:ccproblem} and review some of the existing first order optimality conditions and custom constraint qualifications.
In Section \ref{sec:2nd-order} we derive necessary and sufficient second order optimality conditions for S-stationary points as well as a uniqueness result for M-stationary points using a second order condition.
And in Section \ref{sec:scholtes} we use these results to expand the convergence theory of a Scholtes-type regularization for the continuous reformulation.

To close this section let us introduce some notation used throughout this paper.
We use $\R_+ = [0,\infty)$ for the non negative real numbers.
For a given $x \in \R^n$ and $r > 0$ we denote the open ball with radius $r$ with respect to an arbitrary norm around $x$ by $B_r(x)$, its closure by $\overline{B_r(x)}$ and its boundary by $\partial B_r(x)$.
The unit vectors are denoted by $e_i \in \R^n$ and $e \in \R^n$ is the vector consisting of all ones.
For two vectors $x,y \in \R^n$ we denote the Hadamard product, i.e. the component-wise product, by $x \circ y \in \R^n$ and the connecting line between the vectors $x$ and $y$ by $[x,y]$.
A set of vectors $a_1,\ldots,a_m$ and $b_1,\ldots,b_p$ is called positively linearly dependent, if there exist multipliers $\lambda \in \R^m_+$ and $\mu \in \R^p$ such that $(\lambda, \mu) \neq 0$ and
\[
	\sum_{i=1}^m \lambda_i a_i + \sum_{i=1}^p \mu_i b_i = 0.
\]
For a given vector $x \in \R^n$ the support is given by $\supp(x) := \{i=1,\ldots,n \mid x_i \neq 0\}$ and the cardinality by $\|x\|_0 := |\supp(x)|$.
Throughout this paper, we assume all functions to be at least once continuously differentiable.
Whenever we need them to be twice continuously differentiable, this is stated explicitly.
For a function $f:\R^n \to \R$ the Jacobian $D f(x)$ is supposed to be a row vector whereas the gradient $\nabla f(x) = D f(x)^T$ is a column vector.
The Hessian matrix is denoted by $\nabla^2 f$.
Partial derivatives are indicated by subscripts i.e. $D_x f(x,y)$.

\section{Constraint Qualifications and First Order Optimality Conditions}\label{sec:cq-stat}

In this section, we introduce the continuous reformulation of the original cardinality constrained problem \eqref{eq:ccproblem} used in this paper and recall some first order optimality conditions and custom constraint qualifications previously introduced in \cite{BKS2015,CKS2016}.

Consider the cardinality constrained problem \eqref{eq:ccproblem}, where $f:\R^n \to \R$ is at least once continuously differentiable and the feasible set is given by
\[
	X \coloneqq \{x\in\R^n \mid g(x)\leq0,\ h(x)=0\} \subseteq \R^n
\]
with continuously differentiable functions $g:\R^n \to \R^m$ and $h: \R^n \to \R^p$.
To make the cardinality constraint meaningful, we assume $\kappa < n$.
Furthermore, we assume that
\[
	\mathcal{X} := \{ x \in X \mid \|x\|_0 \leq \kappa\} \neq \emptyset.
\]

In \cite{FMPSW2013,BKS2015} the following continuous reformulation of \eqref{eq:ccproblem} was introduced
\begin{equation}\label{eq:reformulation}
	\begin{aligned}
		\min\nolimits_{(x,y)\in\R^n\times\R^n}\ f(x) \st & g(x)\leq0, \quad h(x)=0,\\
		& 0\leq y \leq e, \quad e^Ty \geq n- \kappa, \\
		& x\circ y=0.
	\end{aligned}
\end{equation}
We denote the feasible set of \eqref{eq:reformulation} by $Z$.
Due to the orthogonality-type constraint $x \circ y = 0$ the auxiliary variable $y \in \R^n$ can be seen as a counter of the zero elements of $x$, of which there should be at least $n-\kappa$.

In \cite{BKS2015} and \cite{FMPSW2013} it was shown that $x^*$ is a global solution of \eqref{eq:ccproblem} if and only if there exists $y^*$ such that $(x^*,y^*)$ is a global solution of \eqref{eq:reformulation}.
Moreover, for each local solution $x^*$ of \eqref{eq:ccproblem} there exists a vector $y^*$ such that $(x^*,y^*)$ is a local solution of \eqref{eq:reformulation}.
However, not every local solution $(x^*,y^*)$ of the reformulation \eqref{eq:reformulation} necessarily corresponds to a local solution $x^*$ of the original problem \eqref{eq:ccproblem}.

To simplify the notation, we define the following index sets for a feasible point $(x^*,y^*)$ of the reformulation \eqref{eq:reformulation}:
\begin{align*}
	I_g(x^*) &:=\ \{i=1,\dots,m \mid g_i(x^*)=0\},\\
	I_0(x^*) &:=\ \{i=1,\dots,n \mid x^*_i=0\}.
\end{align*}
The set $\{1, \ldots,n\}$ is partitioned into
\begin{align*}
	I_{\pm0}(x^*,y^*) &:=\ \{i=1,\dots,n \mid x_i^*\not=0,\,y^*_i=0\},\\
	I_{00}(x^*,y^*)	&:=\ \{i=1,\dots,n \mid x_i^*=0,\,y^*_i=0\},\\
	I_{0+}(x^*,y^*)	&:=\ \{i=1,\dots,n \mid x_i^*=0,\,y^*_i\in(0,1)\},\\
	I_{01}(x^*,y^*)	&:=\ \{i=1,\dots,n \mid x_i^*=0,\,y^*_i=1\}.
\end{align*}
When the point of reference is obvious, we sometimes omit $(x^*,y^*)$ to keep the notation more compact.

Provided a constraint qualification holds, the KKT-conditions are a necessary first order optimality condition for a local minimum of a nonlinear optimization problem. 
However, for \eqref{eq:reformulation} standard constraint qualifications, like the linear independence constraint qualification (LICQ) or the Mangasarian-Fromowitz constraint qualification (MFCQ) (or even weaker ones), can not be expected to hold, see \cite{BKS2015} for details.
In \cite{BKS2015,CKS2016} alternative stationarity concepts for \eqref{eq:reformulation} have been introduced, which are first order optimality conditions under custom constraint qualifications.
We recall the definition of S- and M-stationarity next.
A comparison of further stationary concepts for the case $x\geq0$ can be found in \cite{CKS2016}. 

\begin{definition}\label{def:ccstationary}
	A feasible point $(x^*,y^*)\in Z$ of \eqref{eq:reformulation} is called
	\begin{enumerate}[label=(\alph*)]
		\item \emph{M-stationary} (M = Mordukhovich) if there exist multipliers $(\lambda^*,\mu^*,\gamma^*)\in \R^m\times\R^p\times\R^n$ such that
		\begin{align*}
			\nabla f(x^*)+\sum_{i=1}^m \lambda^*_i \nabla g_i(x^*) + \sum_{i=1}^p \mu^*_i \nabla h_i(x^*) + \sum_{i=1}^n \gamma^*_i e_i  & = 0,\\
			\lambda^*_i \geq 0,\quad \lambda^*_i \cdot g_i(x^*)& =0\quad \forall i=1,\dots,m,\\
			\gamma^*_i&=0\quad \forall i \in I_{\pm0}(x^*,y^*).
		\end{align*}

		\item \emph{S-stationary} (S = Strong) if is is M-stationary and $\gamma^*_i = 0 \quad \forall i \in I_{00}(x^*,y^*)$.

	\end{enumerate}
\end{definition}

Obviously M-stationarity is independent from $y^*$.
Thus we sometimes also say that a point $x^*$ is M-stationary and mean that $x^*$ is feasible for \eqref{eq:ccproblem}, i.e. there exists $y$ such that $(x^*,y)$ is feasible for \eqref{eq:reformulation}, and $x^*$ satisfies the definition of M-stationarity.

For the above stationary conditions to be necessary first order optimality conditions for a local minimum of \eqref{eq:reformulation}, we need suitable constraint qualifications to hold such as one of the following from \cite{CKS2016}.

\begin{definition}
	Let $(x^*,y^*)$ be feasible for \eqref{eq:reformulation}.
	We say that $(x^*,y^*)$ or $x^*$ satisfies
	\begin{enumerate}[label=(\alph*)]
		\item \emph{CC-LICQ (Cardinality Constrained - Linear Independence Constraint Qualification)} if and only if the gradients
		\begin{equation*}
			\nabla g_i(x^*)\ (i\in I_g(x^*)),\ \nabla h_i(x^*)\ (i=1,\dots,p),\ e_i\ (i\in I_0(x^*))
		\end{equation*}
		are linearly independent.

		\item \emph{CC-MFCQ (Cardinality Constrained - Mangasarian-Fromowitz Constraint Qualification)} if and only if the gradients
		\begin{equation*}
			\nabla g_i(x^*)\ (i\in I_g(x^*)),\quad\text{and}\quad \nabla h_i(x^*)\ (i=1,\dots,p),\ e_i\ (i\in I_0(x^*))
		\end{equation*}
		are positively linearly independent.

		\item \emph{CC-CPLD (Cardinality Constrained - Constant Positive Linear Dependence Constraint Qualification)} if for any subset $I_1\subseteq I_g(x^*)$, $I_2\subseteq \{1,\dots,p\}$ and $I_3\subseteq I_0(x^*)$ such that the gradients
		\begin{equation*}
			\nabla g_i(x)\ (i\in I_1),\quad\text{and}\quad \nabla h_i(x)\ (i\in I_2),\ e_i\ (i\in I_3)
		\end{equation*}
		are positively linearly dependent in $x=x^*$, they remain linearly dependent in a neighbourhood of $x^*$.
	\end{enumerate}
\end{definition}

The implications CC-LICQ $\Rightarrow$ CC-MFCQ $\Rightarrow$ CC-CPLD hold (see \cite{CKS2016}), which corresponds to the relations between the counterparts of the above constraint qualifications from the standard theory on nonlinear optimization.
Already under CC-CPLD, S-stationarity is a necessary first order optimality condition (cf. \cite[Theorem 4.2]{CKS2016}).
Here, the behaviour of the continuous reformulation \eqref{eq:reformulation} differs from the related class of MPCCs, where MPCC-LICQ is needed to guarantee S-stationarity of a local minimum.

\begin{remark}\label{rem:cc-mfcq-neighborhood}
	Consider a point $(x^*,y^*) \in Z$ satisfying CC-MFCQ.
	Due to the continuity of $g$ we know $I_g(x) \subseteq I_g(x^*)$ and $I_0(x) \subseteq I_0(x^*)$ for all $x$ sufficiently close to $x^*$.
	Thus, the continuity of $\nabla g$ and $\nabla h$ implies that there exists an $r > 0$ such that CC-MFCQ holds in all $(x,y) \in Z$ with $x \in B_r(x^*)$.
\end{remark}

As mentioned before, CC-LICQ guarantees that a local minimum $(x^*,y^*)$ of \eqref{eq:reformulation} is S-stationary and it is not hard to see that the corresponding multipliers are unique.
In case $x^*$ is even a local minimum of the original problem \eqref{eq:ccproblem}, a similar result can be obtained for all points $(x^*,y)$ feasible for \eqref{eq:reformulation}.

\begin{proposition}\label{prop:cclicq-s-multipl-unique}
	Let $x^*$ be a local minimum of \eqref{eq:ccproblem} satisfying CC-LICQ.
	Then every point $(x^*,y) \in Z$ is S-stationary.
	The corresponding multiplier $(\lambda^*,\mu^*,\gamma^*)\in\R^m\times\R^p\times\R^n$ is unique and independent from $y$.
	In case $\vn{x^*}_0<\kappa$ we additionally have $\gamma^*=0$.
\end{proposition}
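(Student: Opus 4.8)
The plan is to reduce the statement to the known result for a local minimum of the reformulation together with a careful bookkeeping of the index sets. First I would invoke the correspondence between \eqref{eq:ccproblem} and \eqref{eq:reformulation}: since $x^*$ is a local minimum of \eqref{eq:ccproblem}, there exists $\bar y$ such that $(x^*,\bar y)$ is a local minimum of \eqref{eq:reformulation}. Moreover, I would argue that $\|x^*\|_0 \le \kappa$ forces $e^T\bar y \ge n-\kappa$ to be non-binding or, more usefully, that \emph{any} $y$ with $(x^*,y)$ feasible for \eqref{eq:reformulation} is automatically a local minimum of \eqref{eq:reformulation}: feasibility of $(x^*,y)$ near $(x^*,y)$ only constrains $x$ through the unchanged constraints $g(x)\le 0$, $h(x)=0$ and $x\circ y = 0$, and since the objective depends on $x$ alone and $x^*$ already minimizes $f$ over the smaller-looking feasible set, one checks that no nearby feasible $(x,y)$ can do better. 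So it suffices to prove S-stationarity and multiplier uniqueness at an arbitrary local minimum $(x^*,y)$ of \eqref{eq:reformulation} under CC-LICQ, and then show independence from $y$.

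Next I would establish S-stationarity. Under CC-LICQ the gradients $\nabla g_i(x^*)$ ($i\in I_g$), $\nabla h_i(x^*)$ ($i=1,\dots,p$), $e_i$ ($i\in I_0(x^*)$) are linearly independent; by the cited result (CC-CPLD suffices, and CC-LICQ $\Rightarrow$ CC-CPLD) the local minimum $(x^*,y)$ is S-stationary, so there are multipliers $(\lambda^*,\mu^*,\gamma^*)$ with the KKT-type identity
\[
	\nabla f(x^*)+\sum_{i=1}^m \lambda^*_i \nabla g_i(x^*) + \sum_{i=1}^p \mu^*_i \nabla h_i(x^*) + \sum_{i=1}^n \gamma^*_i e_i = 0,
\]
with $\lambda^*_i\ge 0$, $\lambda^*_i g_i(x^*)=0$, and $\gamma^*_i=0$ for $i\in I_{\pm0}\cup I_{00}$. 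In particular $\gamma^*_i$ can only be nonzero for $i$ with $x^*_i=0$ and $y_i>0$, hence for $i\in I_0(x^*)$; and $\lambda^*_i=0$ for $i\notin I_g(x^*)$. Thus the stationarity identity is actually a linear relation involving only the vectors appearing in CC-LICQ (together with $\nabla f(x^*)$ as the inhomogeneous term). Their linear independence then yields that $(\lambda^*_i)_{i\in I_g}$, $\mu^*$, $(\gamma^*_i)_{i\in I_0(x^*)}$ are uniquely determined, and the remaining components vanish by construction, so $(\lambda^*,\mu^*,\gamma^*)$ is unique.

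Finally, independence from $y$ follows because the entire argument above only ever used that $y_i>0$ on at most the set $I_0(x^*)$ (so that $\gamma^*$ is supported in $I_0(x^*)$), never the specific values of $y$; since $(x^*,y)$ feasible for \eqref{eq:reformulation} forces $y_i=0$ whenever $x^*_i\neq 0$, the support of $\gamma^*$ is contained in $I_0(x^*)$ regardless of $y$, and uniqueness pins down one and the same multiplier for every feasible $y$. For the last claim, if $\|x^*\|_0<\kappa$ then $|I_0(x^*)|>n-\kappa$, so the constraint $e^Ty\ge n-\kappa$ is strictly satisfied at any feasible $y$ (one can even take $y_i\in(0,1)$ for all $i\in I_0(x^*)$), i.e. $I_{\pm0}=\emptyset$ and $I_{00}$ can be taken empty as well, forcing $\gamma^*_i=0$ for \emph{all} $i\in I_0(x^*)$; combined with $\gamma^*_i=0$ for $i\notin I_0(x^*)$ this gives $\gamma^*=0$. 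The main obstacle I anticipate is the first step — rigorously justifying that \emph{every} feasible $(x^*,y)$, not just the one produced by the reformulation correspondence, is a local minimum of \eqref{eq:reformulation}; this needs a short argument that shrinking or enlarging $y$ within $[0,e]$ does not create new nearby feasible points with smaller objective, using that $f$ does not depend on $y$ and that the $x$-constraints active near $x^*$ are unaffected.
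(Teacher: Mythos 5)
Your first three steps are sound and essentially the paper's argument: every $(x^*,y)\in Z$ is a local minimum of \eqref{eq:reformulation} because feasibility of nearby $(x,y')$ forces $x$ to be feasible for \eqref{eq:ccproblem}; S-stationarity then follows from CC-LICQ via \cite[Theorem 4.2]{CKS2016}; and since $\supp(\lambda^*)\subseteq I_g(x^*)$, $\supp(\gamma^*)\subseteq I_0(x^*)$, the CC-LICQ gradients pin down a unique multiplier that cannot depend on $y$.

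The last step, however, contains a genuine logical error. You choose $y$ with $y_i\in(0,1)$ for all $i\in I_0(x^*)$ so that $I_{00}(x^*,y)=\emptyset$ and claim this ``forces'' $\gamma^*_i=0$ for all $i\in I_0(x^*)$. The implication goes the wrong way: S-stationarity requires $\gamma^*_i=0$ only on $I_{\pm0}\cup I_{00}$, and places \emph{no} restriction on $\gamma^*_i$ for $i\in I_{0+}\cup I_{01}$. With your choice of $y$ one has $I_{0+}=I_0(x^*)$, so emptying $I_{00}$ removes constraints on $\gamma^*$ rather than adding them, and nothing about $\gamma^*_{I_0(x^*)}$ follows. (Two side remarks: $e^Ty\geq n-\kappa$ need not be strict at \emph{every} feasible $y$, only at suitably chosen ones; and $I_{\pm0}(x^*,y)=\supp(x^*)$ for every feasible $y$, so it is empty only if $x^*=0$.) The correct argument, which is the paper's and for which you already have all ingredients, is the reverse selection: for each $j\in I_0(x^*)$ pick the feasible point $y$ with $y_i=0$ for $i\in\supp(x^*)\cup\{j\}$ and $y_i=1$ otherwise; feasibility holds because $|I_0(x^*)|-1\geq n-\kappa$ when $\vn{x^*}_0<\kappa$. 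Then $j\in I_{00}(x^*,y)$, so S-stationarity of $(x^*,y)$ forces $\gamma^*_j=0$, and since the multiplier is unique and independent of $y$, this is the \emph{same} $\gamma^*$ for all feasible $y$; letting $j$ run through $I_0(x^*)$ gives $\gamma^*=0$.
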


\begin{proof}
	Since $x^*$ is a local minimum of \eqref{eq:ccproblem}, for all $y$ such that $(x^*,y) \in Z$ the point $(x^*,y)$ is a local minimum of \eqref{eq:reformulation} and thus due to CC-LICQ an S-stationary point, see \cite[Theorem 4.2]{CKS2016}.
 	Hence there exist S-stationary multipliers $(\lambda^*,\mu^*,\gamma^*)$ with $\lambda^*_i = 0$ for all $i \notin I_g(x^*)$ and $\gamma^*_i = 0$ for all $i \notin I_0(x^*) $ and
	\begin{equation*}
		\nabla f(x^*)+\sum\limits_{i \in I_g(x^*)}\lambda_i^*\nabla g_i(x^*)+\sum\limits_{i=1}^p\mu_i^*\nabla  h_i(x^*)+\sum\limits_{i \in I_0(x^*)}\gamma_i^*e_i = 0
	\end{equation*}
	Due to CC-LICQ this equation
	has at most one solution $(\lambda^*_{I_g},\mu^*,\gamma^*_{I_0})$ and thus the multiplier $(\lambda^*,\mu^*,\gamma^*)$ is unique and independent from $y$.
	
	Let $\vn{x^*}_0<\kappa$.
	It remains to show that in this case $\gamma^* = 0$.
	For all $i \notin I_0(x^*)$ this is guaranteed by the definition of S-stationarity.
	For every  $j \in I_0(x^*)$ we can define
	\begin{equation}
		y_i = \begin{cases}
			0 & \text{if } i \in \supp(x^*) \cup \{j\}, \\
			1 & \text{else.}
		\end{cases} 
	\end{equation}
	Because $|I_0(x^*)|>n-\kappa$ the point $(x^*,y)$ is feasible for \eqref{eq:reformulation} and thus a local minimum and S-stationary point of \eqref{eq:reformulation}.
	The S-stationarity conditions then imply $\gamma^*_j = 0$.
	Since the multiplier $(\lambda^*,\mu^*,\gamma^*)$ is unique and independent from $y$ and the same argument holds for all $j \in I_0(x^*)$, we have shown $\gamma^* = 0$. \qed
\end{proof}

In the recent paper \cite{PXF2017} the authors also derive first order optimality conditions for \eqref{eq:ccproblem} based on Fr\'{e}chet, limiting and Clarke normal cones, which are called B-, M-, and C-KKT points.
Instead of the constraint qualifications previously introduced here, they use conditions called R-LICQ and R-MFCQ, which however can be shown to be equivalent to CC-LICQ and CC-MFCQ.
In \cite[Theorem 3.2]{PXF2017} it is shown that a local minimum of \eqref{eq:ccproblem} is a B-KKT point under R-LICQ.
Closer inspection of the involved definitions reveals that this is equivalent to Proposition \ref{prop:cclicq-s-multipl-unique}.
To ensure that M-KKT points, which are related to S-stationary points, or C-KKT points, which are equivalent to M-stationary points, are necessary optimality conditions at local minima of \eqref{eq:ccproblem}, R-MFCQ is required to hold in \cite[Theorem 3.4, Theorem 3.5]{PXF2017}.
Using the continuous reformulation instead of the normal cone approach, one can show that instead of R-MFCQ weaker conditions such as CC-CPLD are enough to guarantee M- and S-stationarity of local minima, see \cite{BKS2015,CKS2016}.

\section{Second Order Optimality Conditions}\label{sec:2nd-order}

In this section we derive second order optimality conditions for \eqref{eq:reformulation}.
We begin with a second order necessary optimality condition for local solutions of \eqref{eq:reformulation} which holds under CC-LICQ. 
For S-stationary points we then derive a second order sufficient optimality condition for a certain type of strict local minima.
Moreover, we show that M-stationary points are locally unique, provided CC-CPLD and a second order condition hold.

To formulate these optimality conditions, we need to define the linearisation cone and the critical cone first.
We use the \emph{CC-linearisation cone}, which was introduced in \cite{CKS2016} and used there to derive the custom constraint qualifications and first order optimality conditions for \eqref{eq:reformulation}, see Section \ref{sec:cq-stat}.

\begin{definition}%
	Let $(x^*,y^*)\in Z$ be a feasible point of \eqref{eq:reformulation}.
	The \emph{CC-linearisation cone} is defined by
	\begin{equation}\label{eq:cclc}
		\begin{array}{rll}
			\cclc{Z}{x^*,y^*} = \{(d_x,d_y)\in\R^n\times\R^n \mid & \nabla g_i(x^*)^Td_x\leq0 & \forall i\in I_g,\\
			& \nabla h_i(x^*)^Td_x = 0 &\forall i=1,\dots,p,\\
			& e^Td_y \geq 0 & \text{if } e^Ty^*=n-\kappa,\\
			& e_i^Td_y=0 & \forall i\in I_{\pm0},\\
			& e_i^Td_y\geq0 & \forall i\in I_{00},\\
			& e_i^Td_y\leq0 & \forall i\in I_{01},\\
			& e_i^Td_x=0 & \forall i\in I_{01} \cup I_{0+},\\
			& \left(e_i^Td_x\right)\left(e_i^Td_y\right) = 0 & \forall i\in I_{00}\}.
		\end{array}
	\end{equation}
\end{definition}

Later, we are mostly interested in the directions $d_x$ only.
It is straightforward to see that
\begin{equation*}
	\begin{array}{rcl}
		\{d_x \in \R^n & \mid & \exists d_y \in \R^n \ : \  (d_x,d_y) \in \cclc{Z}{x^*,y^*}\} \\
		= \{d_x \in \R^n & \mid & \nabla g_i(x^*)^Td_x \leq0 \qquad \forall i\in I_g(x^*),\\
		&& \nabla h_i(x^*)^Td_x = 0 \qquad \forall i=1,\dots,p,\\
		&& e_i^T d_x = 0 \qquad\qquad\quad \forall i\in I_{01}(x^*,y^*) \cup I_{0+}(x^*,y^*) \}.
	\end{array}
\end{equation*}
In case $\|x^*\|_0 < \kappa$, this set still depends on the chosen value of $y^*$.
Thus for a given $x^*$ we also consider the union over all $y^*$ such that $(x^*,y^*) \in Z$:
\begin{equation*}
	\begin{array}{rcl}
		\mathcal{L}_{\mathcal X}(x^*) := \{d_x \in \R^n & \mid & \exists y^* \in \R^n, d_y \in \R^n \ : (x^*,y^*)\in Z, \  (d_x,d_y) \in \cclc{Z}{x^*,y^*} \} \\
		= \{d_x \in \R^n & \mid & \nabla g_i(x^*)^Td_x \leq0 \qquad \forall i\in I_g(x^*),\\
		&& \nabla h_i(x^*)^Td_x = 0 \qquad \forall i=1,\dots,p,\\
		&& |\{i \in I_0(x^*) \mid (d_x)_i = 0\}|  \geq n - \kappa \}.
	\end{array}
\end{equation*}
In a certain sense $\mathcal{L}_{\mathcal X}(x^*)$ can be seen as a linearisation cone for the original feasible set $\mathcal X$.
Note however that $\mathcal{L}_{\mathcal X}(x^*)$ in nonconvex in case $\|x^*\|_0 < \kappa$.

The \emph{CC-critical cone}, see also \cite{HK2007,LPR1996,NW2006} for related constructions, is then the set of all potential feasible descent directions.
\begin{definition}\label{def:cc-critical-cone}
	Let $(x^*,y^*)\in Z$.
	The \emph{CC-critical cone} of $Z$ at $(x^*,y^*)$ is defined by
	\begin{equation*}
		\cccc{Z}{x^*,y^*}:=\cclc{Z}{x^*,y^*}\ \cap\ \left\{(d_x,d_y)\in\R^n\times\R^n \mid\nabla f(x^*)^Td_x\leq0\right\}.
	\end{equation*}
	A vector $d\in\cccc{Z}{x^*,y^*}$ is called \emph{critical direction }(\emph{at $(x^*,y^*)$}).
\end{definition}

Analogously we define
\begin{eqnarray*}
	\mathcal{C}_\mathcal{X}(x^*) & :=& \{d_x \in \mathcal{L}_{\mathcal X}(x^*) \mid \nabla f(x^*)^Td_x \leq 0 \}.
\end{eqnarray*}

If $(x^*,y^*)$ is an S-stationary point of \eqref{eq:reformulation}, we can give a description of $\cccc{Z}{x^*,y^*}$ that does not use the gradient of the objective function but instead the multipliers of $(x^*,y^*)$.
For some multipliers $\lambda^*\in\R^m_+$ corresponding to the inequality constraints $g(x^*)\leq0$ we define the index sets
\begin{eqnarray*}
		I_{g+}(x^*,\lambda^*) & \coloneqq & \{i\in I_g(x^*) \mid \lambda^*_i>0\},\\
		I_{g0}(x^*,\lambda^*) & \coloneqq & \{i\in I_g(x^*) \mid \lambda^*_i=0\}.
\end{eqnarray*}
With these index sets, the following proposition gives a characterization of the CC-critical cone for at S-stationary point.

\begin{proposition}\label{prop:cc-cone-multiplier-description}
	Let $(x^*,y^*)$ be an S-stationary point of \eqref{eq:reformulation} with multipliers $(\lambda^*,\mu^*,\gamma^*)$.
	Then we have
	\begin{equation*} 
		\cccc{Z}{x^*,y^*} =  \{(d_x,d_y)^T\in\cclc{Z}{x^*,y^*} \mid \nabla g_i(x^*)^Td_x=0\   \forall i\in I_{g+}(x^*,\lambda^*)\}.
	\end{equation*}
\end{proposition}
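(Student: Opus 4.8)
The plan is to use the S-stationarity identity for $\nabla f(x^*)$ to show that $\nabla f(x^*)^T d_x \geq 0$ already holds for every $(d_x,d_y)$ in the whole CC-linearisation cone; then the extra inequality $\nabla f(x^*)^T d_x \leq 0$ that cuts out the critical cone forces equality, and equality in turn pins down the sign-definite terms. Concretely, I would first fix an arbitrary $(d_x,d_y)\in\cclc{Z}{x^*,y^*}$ and record the relevant inequalities from the definition of $\cclc{Z}{x^*,y^*}$: $\nabla g_i(x^*)^Td_x\leq 0$ for $i\in I_g(x^*)$, $\nabla h_i(x^*)^Td_x=0$ for $i=1,\dots,p$, and $e_i^Td_x=0$ for $i\in I_{01}(x^*,y^*)\cup I_{0+}(x^*,y^*)$. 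From M-stationarity we have $\lambda^*_i=0$ for $i\notin I_g(x^*)$ and $\gamma^*_i=0$ for $i\in I_{\pm0}(x^*,y^*)$, and S-stationarity adds $\gamma^*_i=0$ for $i\in I_{00}(x^*,y^*)$; since $\{1,\dots,n\}$ is partitioned into $I_{\pm0},I_{00},I_{0+},I_{01}$, the multiplier $\gamma^*$ is thus supported only on $I_{0+}(x^*,y^*)\cup I_{01}(x^*,y^*)$.

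Next I would substitute the stationarity equation
\[
\nabla f(x^*) = -\sum_{i=1}^m \lambda^*_i \nabla g_i(x^*) - \sum_{i=1}^p \mu^*_i \nabla h_i(x^*) - \sum_{i=1}^n \gamma^*_i e_i
\]
into $\nabla f(x^*)^Td_x$. The $h$-terms drop out because $\nabla h_i(x^*)^Td_x=0$; the $\gamma^*$-terms drop out because $\gamma^*$ is supported on $I_{0+}\cup I_{01}$, where $e_i^Td_x=0$; and the $\lambda^*$-sum collapses to a sum over $I_{g+}(x^*,\lambda^*)$ since $\lambda^*_i=0$ both off $I_g(x^*)$ and on $I_{g0}(x^*,\lambda^*)$. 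This yields the key identity
\[
\nabla f(x^*)^Td_x = -\sum_{i\in I_{g+}(x^*,\lambda^*)} \lambda^*_i\, \nabla g_i(x^*)^Td_x \;\geq\; 0,
\]
each summand being a product of $\lambda^*_i>0$ with $\nabla g_i(x^*)^Td_x\leq 0$.

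Finally I would prove the two inclusions. For "$\supseteq$": if $(d_x,d_y)$ lies in the right-hand set, then in addition $\nabla g_i(x^*)^Td_x=0$ for all $i\in I_{g+}(x^*,\lambda^*)$, so the identity gives $\nabla f(x^*)^Td_x=0\leq 0$, hence $(d_x,d_y)\in\cccc{Z}{x^*,y^*}$. For "$\subseteq$": if $(d_x,d_y)\in\cccc{Z}{x^*,y^*}$ then $\nabla f(x^*)^Td_x\leq 0$, which combined with the identity forces $\nabla f(x^*)^Td_x=0$ and hence $\sum_{i\in I_{g+}(x^*,\lambda^*)}\lambda^*_i\nabla g_i(x^*)^Td_x=0$; being a sum of nonpositive terms, every term vanishes, and since $\lambda^*_i>0$ on $I_{g+}(x^*,\lambda^*)$ we conclude $\nabla g_i(x^*)^Td_x=0$ for all $i\in I_{g+}(x^*,\lambda^*)$, i.e. $(d_x,d_y)$ lies in the right-hand set.

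There is no genuine obstacle; the only point requiring care is that S-stationarity (not just M-stationarity) is needed to annihilate the $\gamma^*$-terms over $I_{00}$: for $i\in I_{00}$ the CC-linearisation cone only enforces $(e_i^Td_x)(e_i^Td_y)=0$, not $e_i^Td_x=0$, so without $\gamma^*_i=0$ on $I_{00}$ the $\gamma^*$-contribution to $\nabla f(x^*)^Td_x$ need not vanish and the argument breaks down.
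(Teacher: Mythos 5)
Your proposal is correct and follows essentially the same route as the paper's proof: substitute the S-stationarity identity for $\nabla f(x^*)$, use the linearisation-cone relations to kill the $h$- and $\gamma^*$-terms (the latter relying on $\gamma^*$ being supported on $I_{0+}\cup I_{01}$, exactly as the paper notes), and conclude from the sign structure that $\nabla f(x^*)^Td_x\leq 0$ is equivalent to $\nabla g_i(x^*)^Td_x=0$ on $I_{g+}(x^*,\lambda^*)$. The only cosmetic difference is that you phrase the conclusion as two inclusions while the paper writes it as a chain of equivalences.
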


\begin{proof}
	Let $(d_x,d_y)\in\cclc{Z}{x^*,y^*}$ be arbitrary.
	It suffices to show the equivalence
	\begin{equation*}
		\nabla f(x^*)^Td_x\leq0\quad\Leftrightarrow\quad \nabla g_i(x^*)d_x=0\quad\forall i\in I_{g+}(x^*,\lambda^*).
	\end{equation*}
	Since $(x^*,y^*)$ is S-stationary with  multipliers $(\lambda^*,\mu^*,\gamma^*)$ we know $\lambda^* \geq 0$ and 
	\begin{equation*}
			\nabla f(x^*)=-\sum\limits_{i\in I_g}\lambda^*_i\nabla g_i(x^*)-\sum\limits_{i=1}^p\mu_i^*\nabla h_i(x^*) -\sum\limits_{i\in I_{0+}\cup I_{01}}\gamma_i^*e_i.
	\end{equation*}
	Taking into account $(d_x,d_y)\in\cclc{Z}{x^*,y^*}$, we obtain
	\begin{eqnarray*}
		&& \nabla f(x^*)^T d_x \leq 0\\
		 & \quad \Leftrightarrow \quad & -\sum_{i \in I_g} \lambda^*_i \nabla g_i(x^*)^T d_x- \sum_{i=1}^p \mu_i^* \nabla h_i(x^*)^T d_x - \sum_{i \in I_{0+} \cup I_{01}} \gamma_i^ * e_i^T d_x \leq 0 \\
		& \quad \Leftrightarrow \quad & -\sum_{i \in I_g} \lambda^*_i \nabla g_i(x^*)^T d_x \leq 0\\
		& \quad \Leftrightarrow \quad & \nabla g_i(x^*)^T d_x  =0 \quad \forall i \in I_{g+}(x^*,\lambda^*),
	\end{eqnarray*}
	since $\lambda_i^*\geq0$ for all $i=1,\dots,m$. \qed
\end{proof}

Note that the alternative representation from Proposition \ref{prop:cc-cone-multiplier-description} does not necessarily hold for M-stationary points.
The reason is that for an M-stationary point $(x^*,y^*)$ with multipliers $(\lambda^*,\mu^*,\gamma^*)$ and a vector $(d_x,d_y)\in \cclc{Z}{x^*,y^*}$ the equation $\gamma_i^*e_i^Td_x=0$ does not necessary hold for $i\in I_{00}(x^*,y^*)$.

We now proceed to derive second order necessary and sufficient optimality conditions for S-stationary points and uniqueness of M-stationary points under a second order condition.

\subsection{Second Order Necessary Optimality Condition}

Our next goal is to derive a second order necessary optimality condition for local minima of the continuous reformulation \eqref{eq:reformulation}.
Its proof is similar to the approach known from classical nonlinear optimization, see for example \cite{NW2006}, and from mathematical programs with vanishing constraints (MPVCs), see \cite{HK2007}. 

To be able to prove this result, we need the following auxiliary lemma first.
Note that this result requires linear independence constraint qualification to hold, since it is based on an implicit function theorem.

\newcommand{\nccurve}{\xi}
\begin{lemma}\label{lem:ssonc-curve}
	Let $g,h$ be twice continuously differentiable, $(x^*,y^*)$ be an S-stationary point of \eqref{eq:reformulation} with multipliers $(\lambda^*, \mu^*,\gamma^*)$ satisfying CC-LICQ, and $d\coloneqq(d_x,d_y)\in\cccc{Z}{x^*,y^*}$.
	Then there exists an $\varepsilon>0$ and a twice continuously differentiable curve $\nccurve: (-\varepsilon,\varepsilon) \to \R^n$ with $\nccurve(0)=x^*$, $\nccurve'(0)=d_x$ and
	\begin{align}
		(\nccurve(t),y^*)\in Z\quad& \forall t\in[0,\varepsilon),\label{curve:properties0}\\
		g_i(\nccurve(t))=0\quad&\forall i\in I_{g+}(x^*,\lambda^*),\ \forall t\in[0,\varepsilon),&\label{curve:properties1}\\
		\nccurve_i(t)=0\quad&\forall I_{0+}(x^*,y^*)\cup I_{01}(x^*,y^*),\ \forall t\in[0,\varepsilon).\label{curve:properties2}
	\end{align}	
\end{lemma}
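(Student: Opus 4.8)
The plan is to construct the curve $\nccurve$ via the implicit function theorem, in the spirit of the classical second-order necessary-condition proof in nonlinear programming. First I would fix the index set $J := I_{g+}(x^*,\lambda^*)$ of strongly active inequality constraints and the set $K := I_{0+}(x^*,y^*)\cup I_{01}(x^*,y^*)$ of indices where $x^*_i=0$ is forced to stay zero by the reformulation. The idea is to look for $\nccurve(t)$ in the form $\nccurve(t) = x^* + t d_x + r(t)$ with $r(0)=0$, $r'(0)=0$, chosen so that the active constraints $g_i(\nccurve(t))=0$ for $i\in J$, $h_i(\nccurve(t))=0$ for all $i$, and $\nccurve_i(t)=0$ for $i\in K$ hold identically. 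Write $\Phi(t,r)$ for the vector-valued map collecting $g_J(x^*+td_x+r)$, $h(x^*+td_x+r)$, and $(x^*+td_x+r)_i$ for $i\in K$; then $\Phi(0,0)=0$ because $(x^*,y^*)$ is feasible, and one seeks $r=r(t)$ with $\Phi(t,r(t))=0$.

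The key computation is that the partial Jacobian $D_r\Phi(0,0)$ has full row rank: its rows are exactly $\nabla g_i(x^*)^T$ $(i\in J)$, $\nabla h_i(x^*)^T$ $(i=1,\dots,p)$, and $e_i^T$ $(i\in K)$. Since $J\subseteq I_g(x^*)$ and $K\subseteq I_0(x^*)$, these are a subset of the CC-LICQ gradient system, hence linearly independent, so $D_r\Phi(0,0)$ is surjective. To actually solve $\Phi(t,r(t))=0$ with the needed dimensions I would restrict $r$ to the range of $D_r\Phi(0,0)^T$ (so that the restricted square system is invertible) and apply the implicit function theorem; this yields a twice continuously differentiable $r:(-\varepsilon,\varepsilon)\to\R^n$ with $r(0)=0$. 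Differentiating $\Phi(t,r(t))=0$ at $t=0$ gives $D_r\Phi(0,0)\,r'(0) = -D_t\Phi(0,0) = -D_r\Phi(0,0)\,d_x$, that is $D_r\Phi(0,0)(d_x+r'(0))=0$; but $d\in\cccc{Z}{x^*,y^*}$ means in particular $\nabla g_i(x^*)^Td_x=0$ for $i\in I_{g+}$ (by the S-stationary characterization, Proposition \ref{prop:cc-cone-multiplier-description}), $\nabla h_i(x^*)^Td_x=0$, and $e_i^Td_x=0$ for $i\in I_{01}\cup I_{0+}\supseteq K$, so $D_r\Phi(0,0)d_x=0$ as well, forcing $r'(0)=0$ by the injectivity of the restricted map. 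Hence $\nccurve'(0)=d_x$, and \eqref{curve:properties1} and \eqref{curve:properties2} hold by construction.

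It remains to verify \eqref{curve:properties0}, i.e. $(\nccurve(t),y^*)\in Z$ for small $t\geq0$. This is where the critical-cone information beyond the equalities is used. For $i\in I_g(x^*)\setminus J = I_{g0}(x^*,\lambda^*)$ we have $g_i(\nccurve(0))=0$ and $\tfrac{d}{dt}g_i(\nccurve(t))\big|_{t=0}=\nabla g_i(x^*)^Td_x\leq 0$ (a linearisation-cone inequality), and for $i\notin I_g(x^*)$ continuity gives $g_i(\nccurve(t))<0$ for small $t$ — so $g(\nccurve(t))\le 0$ for $t\ge0$ small, with the caveat (the one genuinely delicate point) that when the first derivative of an inactive-multiplier constraint vanishes one must fall back on a slightly finer argument; in the present setting, however, since \eqref{curve:properties1} already pins the strongly active constraints to zero and the remaining ones have strictly negative value or strictly negative derivative, a standard one-sided Taylor estimate suffices. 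The box-type constraints on $y$ are automatic since $y^*$ is held fixed and feasible. Finally, the orthogonality and support conditions: for $i\in I_{\pm0}\cup I_{0+}\cup I_{01}$, $y^*_i$ is already either $0$ or positive; for $i$ with $x^*_i\neq 0$ we have $\nccurve_i(t)\neq 0$ for small $t$ by continuity and $y^*_i=0$, while for $i\in K$ we have $\nccurve_i(t)=0$; the only remaining indices are $i\in I_{00}$ with $y^*_i=0$, where $x^*_iy^*_i=0$ holds regardless of $\nccurve_i(t)$. Thus $x\circ y=0$ and $e^Ty^*\ge n-\kappa$ persist, giving $(\nccurve(t),y^*)\in Z$.

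I expect the main obstacle to be exactly the bookkeeping in the last paragraph: making sure every defining constraint of $Z$ — the orthogonality constraint $x\circ y=0$ in particular, split over the index sets $I_{\pm0},I_{00},I_{0+},I_{01}$ — is checked along $\nccurve$, and handling the $I_{g0}$ inequality constraints whose linearised derivative may vanish. The implicit-function-theorem step itself is routine once CC-LICQ is invoked; the care lies in confirming that holding $y^*$ fixed is legitimate and that $\nccurve'(0)=d_x$ really follows, which in turn is where the assumption $d\in\cccc{Z}{x^*,y^*}$ (as opposed to merely $d_x$ lying in some cone) is essential.
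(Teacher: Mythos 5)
There is a genuine gap, and it sits exactly at the point you flag as ``the one genuinely delicate point''. You only force $g_i(\xi(t))=0$ for the strongly active indices $J=I_{g+}(x^*,\lambda^*)$, and then try to dispose of the weakly active constraints $i\in I_{g0}(x^*,\lambda^*)$ by a first-order argument. But for a critical direction it can happen that $g_i(x^*)=0$, $\lambda_i^*=0$ \emph{and} $\nabla g_i(x^*)^Td_x=0$; for such an index the value and the first derivative of $t\mapsto g_i(\xi(t))$ both vanish at $t=0$, so your claim that ``the remaining ones have strictly negative value or strictly negative derivative'' is false, and no one-sided Taylor estimate saves you: if $d_x^T\nabla^2 g_i(x^*)d_x>0$ (and the correction term does not cancel it), then $g_i(\xi(t))\approx \tfrac{t^2}{2}\,d_x^T\nabla^2 g_i(x^*)d_x>0$ for all small $t>0$, and your curve leaves $Z$, so \eqref{curve:properties0} fails for the curve you construct. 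Since the lemma demands feasibility of $(\xi(t),y^*)$ for all small $t\ge 0$, this is not a bookkeeping issue but a failure of the construction as stated.

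The repair is the one the paper uses: split $I_{g0}(x^*,\lambda^*)$ according to whether $\nabla g_i(x^*)^Td_x=0$ or $<0$, and put the indices with vanishing directional derivative \emph{into the equality system} handled by the implicit function theorem, i.e. work with $q(x)=\bigl(g_{\mathcal A}(x),\,h(x),\,x_{\mathcal B}\bigr)$ where $\mathcal A=I_{g+}(x^*,\lambda^*)\cup\{i\in I_{g0}(x^*,\lambda^*)\mid \nabla g_i(x^*)^Td_x=0\}$ and $\mathcal B=I_{0+}\cup I_{01}$. All these gradients are still a subfamily of the CC-LICQ system, so your full-row-rank/IFT step is unaffected, and because the added rows also annihilate $d_x$ (by construction of the split, not by S-stationarity), the computation giving $r'(0)=0$ and hence $\xi'(0)=d_x$ goes through unchanged. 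The remaining weakly active indices then really do have strictly negative directional derivative, so your first-order argument applies to them, and the rest of your verification (inactive constraints by continuity, the $y$-box and $e^Ty^*\ge n-\kappa$ constraints trivially, and $\xi(t)\circ y^*=0$ via $y_i^*=0$ on $I_{\pm0}\cup I_{00}$ and $\xi_i(t)=0$ on $I_{0+}\cup I_{01}$) is correct and matches the paper's proof.
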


\begin{proof}
	For the given vector $d = (d_x,d_y) \in \cccc{Z}{x^*,y^*}$ split the index set $I_{g,0}(x^*)$ into
	\begin{align*} 
		I^=_{g,0}(x^*,\lambda^*,d)\coloneqq\left\{i\in I_{g0}(x^*,\lambda^*) \mid \nabla g_i(x^*)^Td_x=0\right\},\\
		I^<_{g,=}(x^*,\lambda^*,d)\coloneqq\left\{i\in I_{g0}(x^*,\lambda^*) \mid \nabla g_i(x^*)^Td_x<0\right\}.
	\end{align*}
	To keep the notation compact, we also use the abbreviations
	\begin{align*}
		\mathcal{A}(x^*)\coloneqq\ & I_{g+}(x^*,\lambda^*)\cup I_{g0}^=(x^*,\lambda^*,d),\\
		\mathcal{B}(x^*,y^*)\coloneqq\ & I_{0+}(x^*,y^*)\cup I_{01}(x^*,y^*),\\
		M\coloneqq\ & |\mathcal{A}(x^*)|+p+|\mathcal{B}(x^*,y^*)|
	\end{align*}
	and define $q:\R^n\rightarrow\R^M$,
	\begin{equation*} 
		x\mapsto q(x)\coloneqq
		\begin{pmatrix*}[c]
			g_{\mathcal{A}(x^*)}(x)\\
			h(x)\\
			x_{\mathcal{B}(x^*,y^*)}
		\end{pmatrix*}.
	\end{equation*}
	Since $g$ and $h$ are twice continuously differentiable, so is $q$.
	Using the function $q$ we now define $F:\R^M\times\R\rightarrow\R^M$, $(v,t)\mapsto F(v,t)$, by
	\begin{equation*} 
		F(v,t)\coloneqq q(x^*+t\cdot d_x+Dq(x^*)^Tv).
	\end{equation*}
	Using the chain rule we can calculate
	\begin{equation*} 
		D_v F(v,t)=Dq(x^*+td_x+Dq(x^*)^Tv)Dq(x^*)^T.
	\end{equation*}
	Consider the point $(v^*,t^*)\coloneqq(0,0)$.
	We have $F(v^*,t^*)=0$ and 
	\begin{equation*} 
		D_v F(v^*,t^*)=Dq(x^*)Dq(x^*)^T.
	\end{equation*}
	Since CC-LICQ holds in $(x^*,y^*)$ the matrix $Dq(x^*)\in\R^{M\times n}$ has full row rank and therefore the matrix $D_vF(v^*,t^*)$ is regular.
	The function $F$ is twice continuously differentiable.
	Thus the implicit function theorem (see \cite[Theorem A.2]{NW2006}) provides the existence of an $\varepsilon>0$ and twice continuously differentiable curve $\nu: (-\varepsilon,\varepsilon) \to \R^M$ with the properties $\nu(0)=0$ and for all $t\in(-\varepsilon,\varepsilon)$
	\begin{equation}\label{hilfskurve}
		\begin{aligned}
			& F(\nu(t),t)=0, \qquad\det D_v(\nu(t),t)\not=0,\\
			& \nu'(t)=-\left(D_vF(\nu(t),t)\right)^{-1}D_tF(\nu(t),t).
		\end{aligned}
	\end{equation}
	We have $D_t F(v,t)=Dq(x^*+td_x+Dq(x^*)v)\cdot d_x$ and, using \eqref{hilfskurve}, therefore
	\begin{align}
		\nu'(0) & = -\left(D_vF(\nu(0),0)\right)^{-1}\cdot Dq(x^*)\cdot d_x\notag\\
		& = -\left(D_vF(\nu(0),0)\right)^{-1}\cdot \underbrace{{
			\begin{pmatrix*}[l]
				\nabla g_{\mathcal{A}(x^*)}(x^*)^T\\
				\nabla h(x^*)^T\\
				e_{\mathcal{B}(x^*,y^*)}^T
			\end{pmatrix*}}
			\cdot d_x}_{=0}\ = 0\label{hilfskurve:ableitung0},
	\end{align}
	where we used the $(d_x,d_y)\in\cccc{Z}{x^*,y^*}$, see also Proposition \ref{prop:cc-cone-multiplier-description}.
	Lastly we define $\nccurve:(-\varepsilon,\varepsilon)\rightarrow\R^n$ by $t\mapsto \nccurve(t)\coloneqq x^*+t\cdot d_x+Dq(x^*)^T\nu(t)$.
	Because the involved functions are twice continuously differentiable, so is the function $\nccurve$  and we have $\nccurve'(t)=d_x+Dq(x^*)^T\nu'(t)$ for all $t\in(-\varepsilon,\varepsilon)$.
	
	From \eqref{hilfskurve} and \eqref{hilfskurve:ableitung0} it follows that $\nccurve(0)=x^*$ and
	\begin{align*}
		\nccurve'(0)=d_x+Dq(x^*)^T\nu'(0)=d_x.
	\end{align*}

	Next, we show the feasibility of the vector $(\nccurve(t),y^*)$. 
	To this end, note that for all $j$ and all $t\in(-\varepsilon,\varepsilon)$ we have $q_j(\nccurve(t)) = F_j(v(t),t) = 0$ and thus
	\begin{align}
		g_i(\nccurve(t))=0 &\quad \forall i\in I_{g+}(x^*,\lambda^*)\cup I_{g0}^=(x^*,\lambda^*,d),\label{nclem:feasibility1}\\
		h_i(\nccurve(t))=0 &\quad \forall i\in\{1,\dots,p\},\label{nclem:feasibility2}\\
		\nccurve_i(t)=0 &\quad \forall i\in I_{0+}(x^*,y^*)\cup I_{01}(x^*,y^*)\label{nclem:feasibility3}.
	\end{align}
	Since $(x^*,y^*)\in Z$ the constraints $0\leq y_i^* \leq 1$ and $\sum_{i=1}^n y_i^*\geq n-\kappa$ hold.
	Because of $y_i^*=0$ for all $i\in I_{00}(x^*,y^*)\cup I_{\pm0}(x^*,y^*)$ and \eqref{nclem:feasibility3}, the complementarity constraint $\nccurve_i(t)\cdot y_i^*=0$ holds for all $i\in\{1,\dots,n\}$ and all $t\in(-\varepsilon,\varepsilon)$.
	
	Let $i\in I_{g0}^<(x^*,\lambda^*,d)$.
	We have
	\begin{equation*}
		\frac{d}{dt} (g_i\circ\nccurve)(0) = \nabla g_i(\nccurve(0))^T\nccurve'(0) = \nabla g_i(x^*)^Td_x<0 
	\end{equation*}
	and therefore $g_i(\nccurve(t))<0$ for all $t>0$ sufficiently small.
	
	Since the sets $I_{g0}^<(x^*,\lambda^*,d)$, $I_{g0}^=(x^*,\lambda^*,d)$ and $I_{g+}(x^*,\lambda^*)$ form a partition of $I_g(x^*)$ we have thus shown that the constraint $g_i(\nccurve(t))\leq 0$ holds for all $i\in I_g(x^*)$ and all $t\in [0,\varepsilon)$ with $\varepsilon > 0$ sufficiently small.
	
	Let $i\in \{1,\dots,m\}\setminus I_g(x^*)$ and thus $g_i(x^*)<0$.
	Due to the continuity of $g_i$ and $\nccurve$ the inequality $g_i(\nccurve(t))<0$ then still holds for all $t > 0$ sufficiently small.
	Consequently we have verified $g_i(\nccurve(t))\leq0$ for all $i\in\{1,\dots,m\}$ and, together with \eqref{nclem:feasibility2}, $\nccurve(t)\in X$ for all $t\in[0,\varepsilon)$, if $\varepsilon > 0$ is chosen sufficiently small.
	
	Altogether we have proven $(\nccurve(t),y^*)\in Z$ for all $t\in [0,\varepsilon)$.
	The properties \eqref{curve:properties1} and \eqref{curve:properties2} follow from \eqref{nclem:feasibility1} and \eqref{nclem:feasibility3}. \qed
\end{proof}

Using this result, we can now proceed with the second order necessary condition for the continuous reformulation \eqref{eq:reformulation}.
For its proof we follow an idea in \cite{HK2007}.

\begin{theorem}[Second Order Necessary Optimality Condition]\label{thm:cc-ssonc}
	Let $f,g,h$ be twice continuously differentiable, $(x^*,y^*)$ be a local minimum of \eqref{eq:reformulation} satisfying CC-LICQ, and $(\lambda^*,\mu^*,\gamma^*)$ be the unique S-stationary multipliers for $(x^*,y^*)$.
	Then
	\begin{equation*}
		d_x^T\left(\nabla^2f(x^*)+\sum\limits_{i=1}^m\lambda^*_i\nabla^2 g_i(x^*)+\sum\limits_{i=1}^p\mu^*_i\nabla^2h_i(x^*)\right)d_x\geq 0
	\end{equation*}
	for all $(d_x,d_y)^T\in \cccc{Z}{x^*,y^*}$.
\end{theorem}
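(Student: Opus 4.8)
The plan is to mimic the classical second order necessary condition proof from nonlinear programming, using the feasible curve constructed in Lemma~\ref{lem:ssonc-curve} to reduce the problem to a one-dimensional Taylor expansion of a suitable Lagrangian. Fix a critical direction $(d_x,d_y)^T\in\cccc{Z}{x^*,y^*}$ and let $\nccurve:(-\varepsilon,\varepsilon)\to\R^n$ be the twice continuously differentiable curve provided by Lemma~\ref{lem:ssonc-curve}, so that $\nccurve(0)=x^*$, $\nccurve'(0)=d_x$, $(\nccurve(t),y^*)\in Z$ for $t\in[0,\varepsilon)$, $g_i(\nccurve(t))=0$ for $i\in I_{g+}(x^*,\lambda^*)$, and $\nccurve_i(t)=0$ for $i\in I_{0+}\cup I_{01}$. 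Because $(x^*,y^*)$ is a local minimum of \eqref{eq:reformulation} and $(\nccurve(t),y^*)$ stays feasible, $t\mapsto f(\nccurve(t))$ has a local minimum at $t=0$ over $[0,\varepsilon)$, so its second derivative at $0$ is nonnegative once we know its first derivative vanishes.

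The key algebraic step is to replace $f$ by the Lagrangian $L(x):=f(x)+\sum_i\lambda^*_i g_i(x)+\sum_i\mu^*_i h_i(x)+\sum_{i\in I_{0+}\cup I_{01}}\gamma^*_i x_i$ along the curve. The point is that $L(\nccurve(t))=f(\nccurve(t))$ for all $t\in[0,\varepsilon)$: the $h$-terms vanish since $h(\nccurve(t))=0$; the terms with $i\in I_{g+}(x^*,\lambda^*)$ vanish by \eqref{curve:properties1}; the terms with $i\in I_{g0}(x^*,\lambda^*)$ have $\lambda^*_i=0$; the terms with $i\notin I_g(x^*)$ have $\lambda^*_i=0$; and the $\gamma^*$-terms vanish by \eqref{curve:properties2}. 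Hence $\frac{d^2}{dt^2}f(\nccurve(t))\big|_{t=0}=\frac{d^2}{dt^2}L(\nccurve(t))\big|_{t=0}$. Now expand using the chain rule: with $c'(0)=d_x$,
\[
	\frac{d^2}{dt^2}L(\nccurve(t))\Big|_{t=0}=d_x^T\nabla^2 L(x^*)d_x+\nabla L(x^*)^T\nccurve''(0).
\]
Here $\nabla^2 L(x^*)=\nabla^2 f(x^*)+\sum_i\lambda^*_i\nabla^2 g_i(x^*)+\sum_i\mu^*_i\nabla^2 h_i(x^*)$, which is exactly the matrix in the statement (the $\gamma^*$-terms are linear and contribute nothing to the Hessian). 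The gradient term drops out because $\nabla L(x^*)=0$: this is precisely the S-stationarity condition, since $\nabla L(x^*)=\nabla f(x^*)+\sum_i\lambda^*_i\nabla g_i(x^*)+\sum_i\mu^*_i\nabla h_i(x^*)+\sum_{i\in I_{0+}\cup I_{01}}\gamma^*_i e_i$ and S-stationarity gives $\gamma^*_i=0$ for $i\in I_{\pm0}\cup I_{00}$, so the full sum $\sum_{i=1}^n\gamma^*_i e_i$ equals $\sum_{i\in I_{0+}\cup I_{01}}\gamma^*_i e_i$.

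It then remains to argue $\frac{d}{dt}f(\nccurve(t))\big|_{t=0}=\nabla f(x^*)^T d_x\ge 0$, so that the local minimality of $f\circ\nccurve$ at $t=0$ over $[0,\varepsilon)$ forces the second derivative to be nonnegative. But $(d_x,d_y)$ is a critical direction, so $\nabla f(x^*)^T d_x\le 0$; combined with $\nabla f(x^*)^T d_x=\nabla L(x^*)^T d_x-\sum_{i\in I_{0+}\cup I_{01}}\gamma^*_i (d_x)_i=0$ (using $\nabla L(x^*)=0$ and $(d_x)_i=0$ on $I_{0+}\cup I_{01}$ from membership in the CC-linearisation cone), we actually get $\nabla f(x^*)^T d_x=0$, which is even cleaner. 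Then $\frac{d^2}{dt^2}f(\nccurve(t))\big|_{t=0}\ge 0$ yields the claim. I expect the main subtlety to be bookkeeping: carefully checking that every Lagrangian term that is not part of the stated Hessian either vanishes identically along $\nccurve$ or is linear, and matching the index sets in Lemma~\ref{lem:ssonc-curve} against the S-stationarity multiplier conditions; the analytic content (Taylor expansion, one-dimensional minimality) is entirely routine.
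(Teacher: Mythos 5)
Your proposal is correct and follows essentially the same route as the paper: the curve from Lemma \ref{lem:ssonc-curve}, the observation that the Lagrangian agrees with $f$ along the curve (via \eqref{curve:properties1}, \eqref{curve:properties2} and the vanishing multipliers), S-stationarity to kill the gradient term, and a one-dimensional Taylor argument at the one-sided minimum $t=0$ (the paper phrases this last step as a contradiction to local minimality, which is equivalent). The only cosmetic point is that the second-derivative conclusion needs $\frac{d}{dt}f(\xi(t))\big|_{t=0}=0$ exactly, not merely $\geq 0$, which you do establish.
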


\begin{proof}
	Since the CC-LICQ holds in the local minimum $(x^*,y^*)$, this point is also an S-stationary point of \eqref{eq:reformulation} with unique multipliers $(\lambda^*,\mu^*,\gamma^*)$.

	Let $d=(d_x,d_y)^T\in \cccc{Z}{x^*,y^*}$ be arbitrary.
	Due to Lemma \ref{lem:ssonc-curve} there exists an $\varepsilon>0$ and twice continuously differentiable curve $\nccurve: (-\varepsilon,\varepsilon) \to \R^n$ with the properties $\nccurve(0)=x^*$, $\nccurve'(0)=d_x$ and
	\begin{align}
		(\nccurve(t),y^*)\in Z\quad& \forall t\in[0,\varepsilon),\label{curve:properties0b}\\
		g_i(\nccurve(t))=0\quad&\forall i\in I_{g+}(x^*,\lambda^*),\ \forall t\in[0,\varepsilon),&\label{curve:properties1b}\\
		\nccurve_i(t)=0\quad&\forall I_{0+}(x^*,y^*) \cup I_{01}(x^*,y^*),\ \forall t\in[0,\varepsilon).\label{curve:properties2b}
	\end{align}	
	We define $\ell:\R^n\times\R^m\times\R^p\times\R^n\rightarrow\R$,
	\begin{equation*} 
		(x,\lambda,\mu,\gamma)\mapsto \ell(x,\lambda,\mu,\gamma)\coloneqq f(x)+\sum\limits_{i=1}^m\lambda_i g_i(x)+\sum\limits_{i=1}^p\mu_i h_i(x)+\sum\limits_{i=1}^n\gamma_i x_i.
	\end{equation*}
	Due to our assumptions on $f$, $g$ and $h$, this function is also twice continuously differentiable and since $(x^*,y^*)$ is S-stationary, we know
	\begin{equation}\label{thm:ssonc-gradientl}
		\nabla_x\ell(x^*,\lambda^*,\mu^*,\gamma^*)=0.
	\end{equation}

	Define the function $\varphi:(-\varepsilon,\varepsilon)\rightarrow\R$ by $t\mapsto \varphi(t)\coloneqq \ell(\nccurve(t),\lambda^*,\mu^*,\gamma^*)$.
	Combining \eqref{curve:properties1b} and \eqref{curve:properties2b} with the fact that $(x^*,y^*)$ with the multipliers $(\lambda^*, \mu^*, \gamma^*)$ is S-stationary, we obtain for all $t\in[0,\varepsilon)$
	\begin{equation}\label{thm:ssonc-phifzusammenhang} 
		\varphi(t)=f(\nccurve(t))+\sum\limits_{i=1}^m\lambda^*_i g_i(\nccurve(t))+\sum\limits_{i=1}^p\mu^*_i h_i(\nccurve(t))+\sum\limits_{i=1}^n\gamma^*_i \nccurve_i(t)=f(\nccurve(t)). 
	\end{equation}
	The function $\varphi$ is twice continuously differentiable with
	\begin{align*}
		\varphi'(t) & = \nabla_x\ell(\nccurve(t),\lambda^*,\mu^*,\gamma^*)^T\cdot\nccurve'(t),\\
		\varphi''(t) & = \nccurve'(t)^T\nabla_{xx}\ell(\nccurve(t),\lambda^*,\mu^*,\gamma^*)\nccurve'(t)+\nabla_x\ell(\nccurve(t),\lambda^*,\mu^*,\gamma^*)^T\cdot\nccurve''(t)
	\end{align*}
	for all $t\in[0,\varepsilon)$.
	Using \eqref{curve:properties0b}, \eqref{thm:ssonc-gradientl} and \eqref{thm:ssonc-phifzusammenhang}, we obtain $\varphi(0) =f(x^*)$, $\varphi'(0) =0$ and
	\begin{eqnarray*} 
		\varphi''(0)  &=& d_x^T \nabla_{xx}\ell(x^*,\lambda^*,\mu^*,\gamma^*) d_x \\
		& =& d_x^T\left(\nabla_x^2f(x^*)+\sum\limits_{i=1}^m\lambda^*_i\nabla_x^2 g_i(x^*)+\sum\limits_{i=1}^p\mu^*_i\nabla_x^2h_i(x^*)\right)d_x.
	\end{eqnarray*}

	To conclude the proof assume that $\varphi''(0) = d_x^T \nabla_{xx}\ell(x^*,\lambda^*,\mu^*,\gamma^*) d_x<0$.
	Because $\varphi$ is twice continuously differentiable, the inequality $\varphi''(0)<0$ implies $\varphi''(t)<0$ for all $|t|$ sufficiently small.
	For $t>0$ sufficiently small Taylor's theorem provides the existence of a $\theta_t\in[0,t)$ such that
	\begin{equation*}
		\varphi(t) = \varphi(0) + t\cdot\underbrace{\varphi'(0)}_{=0}+\frac{t^2}{2}\underbrace{\varphi''(\theta_t)}_{<0}.
	\end{equation*}
	Thus for $t > 0$ sufficiently small  we obtain $\varphi(t)<\varphi(0)$ (note $\theta_t\rightarrow0$ for $t\rightarrow0$).
	Altogether we can argue that
	\begin{equation*}
		f(\nccurve(t))=\varphi(t)<\varphi(0)=f(x^*)
	\end{equation*}
	for $t > 0$ sufficiently small.
	Since $(\nccurve(t),y^*)$ is feasible for \eqref{eq:reformulation} for all $t\in[0,\varepsilon)$ and $(\nccurve(t),y^*) \to (x^*,y^*)$ for $t \downarrow 0$, this is a contradiction to $(x^*,y^*)$ being a local minimum of \eqref{eq:reformulation}. \qed
\end{proof}

If $x^*$ is a local minimum of \eqref{eq:ccproblem} satisfying CC-LICQ, we know that every feasible point $(x^*,y) \in Z$ is a local minimum and thus S-stationary point of \eqref{eq:reformulation}.
By Proposition \ref{prop:cclicq-s-multipl-unique} all S-stationary points $(x^*,y)$ share a unique multiplier $(\lambda^*,\mu^*,\gamma^*)$.
Thus, as a corollary we immediately recover the second order necessary sufficient condition from \cite[Theorem 4.1]{PXF2017}:

\begin{corollary}\label{cor:cc-ssonc}
	Let $f,g,h$ be twice continuously differentiable, $x^*$ be a local minimum of \eqref{eq:ccproblem} satisfying CC-LICQ, and $(\lambda^*,\mu^*,\gamma^*)$ be the unique S-stationary multiplier for all $(x^*,y) \in Z$.
	Then
	\begin{equation*}
		d_x^T\left(\nabla^2f(x^*)+\sum\limits_{i=1}^m\lambda^*_i\nabla^2 g_i(x^*)+\sum\limits_{i=1}^p\mu^*_i\nabla^2h_i(x^*)\right)d_x\geq 0
	\end{equation*}
	for all $d_x \in \mathcal{C}_\mathcal{X}(x^*)$.
\end{corollary}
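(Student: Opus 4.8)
The plan is to obtain the statement as an immediate consequence of Theorem~\ref{thm:cc-ssonc}, applied not at one fixed point but at a point $(x^*,y^*)$ chosen individually for each critical direction.

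First I would fix an arbitrary $d_x \in \mathcal{C}_\mathcal{X}(x^*)$. Unravelling the definitions, $d_x \in \mathcal{L}_{\mathcal X}(x^*)$ means there exist $y^* \in \R^n$ and $d_y \in \R^n$ with $(x^*,y^*) \in Z$ and $(d_x,d_y) \in \cclc{Z}{x^*,y^*}$; combining this with $\nabla f(x^*)^T d_x \leq 0$ gives $(d_x,d_y) \in \cccc{Z}{x^*,y^*}$ by Definition~\ref{def:cc-critical-cone}.

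Next I would verify that Theorem~\ref{thm:cc-ssonc} is applicable at this $(x^*,y^*)$. Since $x^*$ is a local minimum of \eqref{eq:ccproblem}, the point $(x^*,y^*)$ is a local minimum of \eqref{eq:reformulation} (by the correspondence established in \cite{BKS2015,FMPSW2013}), and CC-LICQ holds at $(x^*,y^*)$ because it only involves $x^*$. The key point is Proposition~\ref{prop:cclicq-s-multipl-unique}: the S-stationary multiplier of $(x^*,y^*)$ is exactly $(\lambda^*,\mu^*,\gamma^*)$ and is independent of the particular $y^*$ selected above. This is what makes the single multiplier $(\lambda^*,\mu^*,\gamma^*)$ work uniformly for all critical directions, even though different $d_x$ may require different companion vectors $y^*$ and $d_y$.

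Applying Theorem~\ref{thm:cc-ssonc} at $(x^*,y^*)$ to the critical direction $(d_x,d_y) \in \cccc{Z}{x^*,y^*}$ then yields the asserted inequality for $d_x$; since $d_x \in \mathcal{C}_\mathcal{X}(x^*)$ was arbitrary, the corollary follows. I do not expect any genuine obstacle here: the argument is essentially bookkeeping, and the only subtlety is reconciling the $y^*$-dependence of the cone $\cccc{Z}{x^*,y^*}$ with the $y^*$-independence of the multiplier, which is precisely what Proposition~\ref{prop:cclicq-s-multipl-unique} provides.
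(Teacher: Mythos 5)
Your proposal is correct and follows essentially the same route as the paper: apply Theorem \ref{thm:cc-ssonc} at a point $(x^*,y^*)\in Z$ chosen for the given critical direction (every such point is a local minimum of \eqref{eq:reformulation} since $x^*$ solves \eqref{eq:ccproblem} locally), and use Proposition \ref{prop:cclicq-s-multipl-unique} to see that the S-stationary multiplier is the same for every choice of $y^*$, so the single multiplier $(\lambda^*,\mu^*,\gamma^*)$ serves all $d_x\in\mathcal{C}_\mathcal{X}(x^*)$. No gaps.
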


\subsection{Second Order Sufficient Optimality Condition}

In this section we state a second order sufficient optimality condition for \eqref{eq:reformulation}. 
We begin by introducing a condition for S-stationary points that can be used to identify which S-stationary points are local minima of \eqref{eq:reformulation}.
Later we also use a similar condition for M-stationary points to give a sufficient condition for the local uniqueness of M-stationary points.

\begin{definition}
	Let $f,g,h$ be twice continuously differentiable, and $(x^*,y^*) \in Z$ be an S-stationary point of \eqref{eq:reformulation}.
	If for all directions $(d_x,d_y)\in \cccc{Z}{x^*,y^*}$ with $d_x\neq0$ there exists \underline{at least one} S-stationary multiplier $(\lambda^*,\mu^*,\gamma^*)$ such that
	\begin{equation}\label{eq:ccossc-inequality}
		d_x^T\left(\nabla^2f(x^*)+\sum\limits_{i=1}^m\lambda^*_i\nabla^2g_i(x^*)+\sum\limits_{i=1}^p\mu^*_i\nabla^2h_i(x^*)\right)d_x>0,
	\end{equation}
	then we say that the \emph{Cardinality Constrained Second Order Sufficient Optimality Condition} (\emph{CC-SOSC}) holds in $(x^*,y^*)$.
\end{definition}

Note that in this condition we do not have to check all $(d_x,d_y) \in \cccc{Z}{x^*,y^*}$ with $(d_x,d_y) \neq 0$ but only those with $d_x \neq 0$.
For directions with $d_x = 0$ condition \eqref{eq:ccossc-inequality} obviously cannot be satisfied.
But since the objective function $f$ does only depend on $x$, the directions $d_x \neq 0$ are the important ones.

From standard nonlinear optimization, we know that a second order sufficiency condition combined with a KKT point yields a strict local minimum.
However, since the objective function here does not depend on $y$, we cannot expect to obtain a strict local minimum with respect to both variables unless $y$ is locally fixed.
For this reason, we have to work with the concept of a strict local minimum with respect to $x$.

\begin{definition}
	We say that a feasible point $(x^*,y^*)$ of \eqref{eq:reformulation} is a \emph{strict local minimum with respect to $x$ of \eqref{eq:reformulation}}, if there exists a radius $r>0$ such that
	\begin{equation*}
		f(x^*)<f(x) \quad\forall (x,y)\in B_r(x^*,y^*)\cap\{(x,y)\in Z \mid x\not=x^*\}.
	\end{equation*}
\end{definition}

Note that a strict local minimum $(x^*,y^*)$ with respect to $x$ is always a local minimum with respect to both variables since for all $(x,y) \in B_r(x^*,y^*)$ either $x = x^*$ and thus $f(x) = f(x^*)$ or $x \neq x^*$ and thus $f(x) > f(x^*)$.

The following theorem shows that CC-SOSC is indeed a sufficient condition for an S-stationary point to be a local minimum of the reformulation \eqref{eq:reformulation}.
For the proof we adapt a line of argument from \cite{HK2007}.

\begin{theorem}[Second Order Sufficient Optimality Condition]\label{thm:cc-ssosc}
	Let $f,g,h$ be twice continuously differentiable and $(x^*,y^*)$ be an S-stationary point of \eqref{eq:reformulation} satisfying CC-SOSC.
	Then $(x^*,y^*)$ is a strict local minimum with respect to $x$ of \eqref{eq:reformulation}.
\end{theorem}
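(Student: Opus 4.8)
The plan is to argue by contradiction, following the classical blueprint for second order sufficiency in nonlinear programming (and its MPVC adaptation from \cite{HK2007}). Suppose $(x^*,y^*)$ is not a strict local minimum with respect to $x$. Then there exists a sequence $(x^k,y^k)\in Z$ with $x^k\neq x^*$, $(x^k,y^k)\to(x^*,y^*)$ and $f(x^k)\leq f(x^*)$. Write $x^k = x^* + t_k d^k$ with $t_k := \vn{x^k - x^*}\downarrow 0$ and $d^k := (x^k-x^*)/t_k$, so $\vn{d^k}=1$. Passing to a subsequence, $d^k\to d_x$ with $\vn{d_x}=1$, in particular $d_x\neq0$. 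The first main step is to show that $d_x$, suitably completed by some $d_y$, lies in $\cccc{Z}{x^*,y^*}$: feasibility of the $x^k$ together with a first order Taylor expansion of $g_i$, $h_i$ gives $\nabla g_i(x^*)^Td_x\leq0$ for $i\in I_g$ and $\nabla h_i(x^*)^Td_x=0$; the constraint $x^k\circ y^k = 0$ together with $y^k\to y^*$ forces $(d_x)_i = 0$ for $i\in I_{0+}\cup I_{01}$ (since $y^*_i>0$ there, $y^k_i$ stays bounded away from $0$, so $x^k_i=0$); and $f(x^k)\leq f(x^*)$ gives $\nabla f(x^*)^Td_x\leq0$. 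Choosing an appropriate $d_y$ (e.g. the one matching the active set structure of $y^k-y^*$, rescaled) shows $(d_x,d_y)\in\cccc{Z}{x^*,y^*}$; the details here mirror the description of $\mathcal{L}_{\mathcal X}(x^*)$ and the linearisation cone given earlier.

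Next I would invoke CC-SOSC to obtain an S-stationary multiplier $(\lambda^*,\mu^*,\gamma^*)$, depending on this particular $d_x$, with $d_x^T H(\lambda^*,\mu^*) d_x > 0$, where I abbreviate $H(\lambda^*,\mu^*) := \nabla^2 f(x^*)+\sum_{i=1}^m \lambda^*_i\nabla^2 g_i(x^*)+\sum_{i=1}^p \mu^*_i\nabla^2 h_i(x^*)$. The plan is then to compare $f(x^k)$ with the value of the Lagrangian-type function $\ell(x,\lambda^*,\mu^*,\gamma^*) = f(x)+\sum \lambda^*_i g_i(x)+\sum\mu^*_i h_i(x)+\sum\gamma^*_i x_i$, as in the proof of Theorem \ref{thm:cc-ssonc}. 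Using $\nabla_x\ell(x^*,\lambda^*,\mu^*,\gamma^*)=0$ (S-stationarity) and a second order Taylor expansion,
\[
	\ell(x^k,\lambda^*,\mu^*,\gamma^*) = \ell(x^*,\lambda^*,\mu^*,\gamma^*) + \tfrac{t_k^2}{2}\, (d^k)^T H_k\, d^k
\]
for suitable intermediate Hessians $H_k\to H(\lambda^*,\mu^*)$, so the quadratic term is $\tfrac{t_k^2}{2}(d_x^T H(\lambda^*,\mu^*) d_x + o(1))$, which is strictly positive for large $k$. On the other hand I need $\ell(x^k,\lambda^*,\mu^*,\gamma^*)\leq f(x^k)$ and $\ell(x^*,\lambda^*,\mu^*,\gamma^*)=f(x^*)$: the latter holds since $g_i(x^*)=h_i(x^*)=0$ for active constraints and $\gamma^*_i x^*_i=0$ always; for the former, $\sum\lambda^*_i g_i(x^k)\leq 0$ by $\lambda^*\geq0$ and $g(x^k)\leq0$, $\sum\mu^*_i h_i(x^k)=0$, and $\sum\gamma^*_i x^k_i=0$ because $\gamma^*_i=0$ for $i\in I_{\pm0}\cup I_{00}$ by S-stationarity while $x^k_i=0$ for $i\in I_{0+}\cup I_{01}$ (as above, since $y^k_i$ bounded away from $0$). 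Combining these, $f(x^k)\geq \ell(x^k,\lambda^*,\mu^*,\gamma^*) > f(x^*)$ for large $k$, contradicting $f(x^k)\leq f(x^*)$.

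The main obstacle I anticipate is the interaction between the multiplier and the direction: because CC-SOSC only asserts existence of \emph{some} suitable multiplier for each $d_x$ (unlike the classical uniform SOSC), one cannot fix a single multiplier in advance, so the contradiction has to be derived after extracting the limiting direction $d_x$ and only then selecting the matching $(\lambda^*,\mu^*,\gamma^*)$. One has to make sure the sign arguments $\sum\gamma^*_i x^k_i=0$ and $\sum\lambda^*_i g_i(x^k)\leq0$ remain valid for that chosen multiplier — this is fine, since they only use the general S-stationarity structure ($\gamma^*_i=0$ off $I_{0+}\cup I_{01}$, $\lambda^*\geq0$) which holds for every S-stationary multiplier. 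A secondary technical point is the careful choice of $d_y$ so that $(d_x,d_y)$ genuinely lands in the CC-linearisation cone, in particular verifying the orthogonality condition $(e_i^Td_x)(e_i^Td_y)=0$ for $i\in I_{00}$; but since $d_x$ is the limit of $(x^k-x^*)/t_k$ and on $I_{00}$ one may simply take $(d_y)_i$ to vanish wherever $(d_x)_i\neq0$, this causes no difficulty. With these points handled, the argument closes exactly as in the necessary-condition proof above.
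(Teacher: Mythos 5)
Your proposal is correct and follows essentially the same route as the paper's proof: argue by contradiction, normalize the differences $x^k-x^*$, show the limiting direction (completed by a suitable $d_y$, where indeed any admissible choice such as the limit of the rescaled $y^k-y^*$ works) is a critical direction, and then use the Lagrangian-type function $\ell$ with $\nabla\ell(x^*)=0$, the inequalities $\ell(x^k)\leq f(x^k)\leq f(x^*)=\ell(x^*)$ (valid for every S-stationary multiplier since $\gamma^*_i=0$ off $I_{0+}\cup I_{01}$ and $x^k_i=0$ there for large $k$), and a second order Taylor expansion to contradict CC-SOSC. The only cosmetic difference is that the paper derives $d_x^T H d_x\leq0$ for an arbitrary S-stationary multiplier and then contradicts CC-SOSC, whereas you first select the multiplier provided by CC-SOSC and contradict $f(x^k)\leq f(x^*)$; the two are logically equivalent.
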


\begin{proof}
	Assume that the claim is false.
	Then we can find a sequence $\left(x^k,y^k\right)_{k}\subseteq Z$ with $(x^k,y^k)\rightarrow (x^*,y^*)$ ($k\rightarrow\infty$) and $x^k\not=x^*$ such that $f(x^k)\leq f(x^*)$ for all $k\in\N$.
	We deduce a contradiction to \eqref{eq:ccossc-inequality} from those properties.
	To this end define the directions $d^k = (d_x^k,d_y^k)$ by
	\begin{align*}
		d_x^k \coloneqq \frac{x^k-x^*}{\vn{x^k-x^*}},\quad\quad d_y^k\coloneqq\frac{y^k-y^*}{\vn{(x^k,y^k)-(x^*,y^*)}}
	\end{align*}
	for all $k\in\N$. 

	We have $\vn{d_x^k}=1$ and $\vn{d_y^k}\leq1$ for all $k\in\N$, i.e. the sequences are bounded. 
	Thus, we can assume without loss of generality that $(d^k)_{k}$ converges to some direction $d = (d_x,d_y)$.
	Because $\vn{d_x^k}=1$ for all $k\in\N$ we know $d_x\not=0$.

	We proceed to show that $d$ is a critical direction.
	To do so, we exploit the fact that $(x^k,y^k)$ are feasible for all $k \in \N$ and converging to $(x^*,y^*)$.
	
	For all $k\in\N$, by the mean value theorem, there is a $\xi^k\in [x^k,x^*]$ such that
	\begin{equation*}
		\nabla g_i(\xi^k)^T(x^k-x^*)=g_i(x^k)-g_i(x^*)\leq 0 \quad \forall i \in I_g(x^*).
	\end{equation*}
	Dividing the above inequality by $\vn{x^k-x^*}$ and letting $k\rightarrow\infty$, we obtain $\nabla g_i(x^*)^Td_x\leq0$ for all $i\in I_g(x^*)$, since $\nabla g_i$ is continuous.
	Analogously we can show $\nabla h_i(x^*)^Td_x=0$ for all $i=1,\dots,p$.

	If $e^T y^*=n-\kappa$, we obtain for all $k \in \N$
	\begin{equation*}
		e^T(y^k-y^*)=e^Ty^k- (n -\kappa) \geq0 
		\quad \Longrightarrow \quad
		e^Td_y\geq0.
	\end{equation*}

	For $i\in I_{\pm0}(x^*,y^*)$ we have $x_i^k\not=0$ and thus $y_i^k=0$ for sufficiently large $k$.
	Hence also
	\begin{equation*}
		e_i^T(y^k-y^*)=y_i^k-y_i^*=0
		\quad \Longrightarrow \quad
		e_i^Td_y=0.
	\end{equation*}

	For $i\in I_{00}(x^*,y^*)$ we have
	\begin{equation*}
		e_i^T(y^k-y^*)=y_i^k-y_i^*=y_i^k\geq0
		\quad \Longrightarrow \quad
		e_i^Td_y \geq0.
	\end{equation*}

	For $i\in I_{01}(x^*,y^*)$ we have
	\begin{equation*}
		e_i^T(y^k-y^*)=y_i^k-y_i^*=y_i^k-1\leq0
		\quad \Longrightarrow \quad
		e_i^Td_y \leq0.
	\end{equation*}

	For $i\in I_{0+}(x^*,y^*)\cup I_{01}(x^*,y^*)$ we have $y_i^k>0$ and thus $x_i^k=0$ for $k$ sufficiently large.
	Hence also
	\begin{equation*}
		e_i^T(x^k-x^*)=x_i^k-x_i^*=0
		\quad \Longrightarrow \quad
		e_i^Td_x \leq0.
	\end{equation*}

	For $i\in I_{00}(x^*,y^*)$ we have 
	\begin{align*}
		(e_i^Td_x)(e_i^Td_y) & =\lim\limits_{k\rightarrow\infty}\left(\frac{e_i^T (x^k-x^*)}{\vn{x^k-x^*}}\right)\left(\frac{e_i^T(y^k-y^*)}{\vn{(x^k,y^k)-(x^*,y^*)}}\right)\\
		& = \lim\limits_{k\rightarrow\infty}\left(\frac{x_i^k-x_i^*}{\vn{x^k-x^*}}\right)\left(\frac{y_i^k-y_i^*}{\vn{(x^k,y^k)-(x^*,y^*)}}\right)\\
		& = \lim\limits_{k\rightarrow\infty}\frac{x_i^k\cdot y_i^k}{\vn{x^k-x^*}\vn{(x^k,y^k)-(x^*,y^*)}}=0.
	\end{align*}
	
	We thus have shown $d\in\cclc{Z}{x^*,y^*}$.
	For all $k\in\N$, applying the mean value theorem to the objective function, we find a $\zeta^k\in [x^k,x^*]$ with
	\begin{equation*}
		\nabla f(\zeta^k)^T(x^k-x^*)=f(x^k)-f(x^*)\leq0
		\quad \Longrightarrow \quad
		\nabla f(x^*)^Td_x\leq0.
	\end{equation*}
	Hence $d\in\cclc{Z}{x^*,y^*}\cap\{(d_x,d_y)\in\R^n\times\R^n\,:\,\nabla f(x^*)^Td\leq0\}=\cccc{Z}{x^*,y^*}$.
	
	Now it remains to show that for all S-stationary multipliers $(\lambda^*,\mu^*,\gamma^*)$ the direction $(d_x,d_y) \in \cccc{Z}{x^*,y^*}$ violates \eqref{eq:ccossc-inequality}.
	To this end fix and arbitrary S-stationary multiplier $(\lambda^*,\mu^*,\gamma^*)$ and define the twice continuously differentiable function $\ell:\R^n\rightarrow\R$ by
	\begin{equation*}
		x\mapsto \ell(x)\coloneqq f(x)+\sum\limits_{i=1}^m\lambda_i^*g_i(x)+\sum\limits_{i=1}^p\mu_i^*h_i(x)+\sum\limits_{i=1}^n\gamma_i^*x_i.
	\end{equation*}
	The Hessian of $\ell$ at $x^*$ is the Hessian in \eqref{eq:ccossc-inequality}. Using the S-stationarity of $(x^*,y^*)$ with the  multipliers $(\lambda^*,\mu^*,\gamma^*)$, we know  $\ell(x^*)=f(x^*)$ and $\nabla \ell(x^*)=0$.

	For sufficiently large $k\in\N$ we thus obtain
	\begin{align}
		\ell(x^*) & =f(x^*)  \geq f(x^k) \notag\\
		& \geq f(x^k) + \sum\limits_{i=1}^m\lambda_i^*g_i(x^k) + \sum\limits_{i=1}^p\mu_i^* h_i(x^k) + \sum\limits_{i=1}^n \gamma_i^*x_i^k = \ell(x^k). \label{thm:ssosc-absch-l}
	\end{align}
	For the second inequality above we use the feasibility of $(x^k,y^k)$ and thus add only non-positive sums.
	The last sum is zero due to the fact that $y_i^k>0$ for all $i\in I_{0+}(x^*,y^*)\cup I_{01}(x^*,y^*)$ and sufficiently large $k\in\N$ and thus $x_i^k=0$. 
	(Note that this argument does not work if $(x^*,y^*)$ is only M-stationary.)
	For each $k\in\N$ Taylor's theorem provides us with a $\xi^k\in [x^k,x^*]$ for which the equality
	\begin{align*}
		\ell(x^k) & = \ell(x^*) + \nabla \ell(x^*)^T(x^k-x^*)+\frac{1}{2}(x^k-x^*)^T\nabla^2\ell(\xi^k)(x^k-x^*)
	\end{align*}
	holds.
	From \eqref{thm:ssosc-absch-l} we know $\ell(x^k)-\ell(x^*)\leq0$.
	Together with $\nabla \ell(x^*)=0$ and the above equality, we therefore have
	\begin{align*} 
		   & (x^k-x^*)^T\left(\nabla^2f(\xi^k)+\sum\limits_{i=1}^n\lambda_i^*\nabla^2g_i(\xi^k)+\sum\limits_{i=1}^p\mu_i^*\nabla^2h_i(\xi^k)\right)(x^k-x^*)\\
		=\ & (x^k-x^*)^T\nabla^2\ell(\xi^k)(x^k-x^*) \ =\  2(\ell(x^k)-\ell(x^*)) \ \leq\ 0
	\end{align*}
	for sufficiently large $k\in\N$. Dividing by $\vn{x^k-x^*}^2$ and letting $k$ tend to infinity this yields a contradiction to the assumption \eqref{eq:ccossc-inequality} due to $d_x \neq 0$. \qed
\end{proof}

In the previous result we have seen that CC-SOSC in an S-stationary point is a sufficient condition for a local minimum.
However, contrary to the corresponding result in nonlinear programming, it guarantees a strict local minimum only with respect to changes in the $x$-variable.
Such a behaviour was to be expected, since the objective function $f$ does not depend on the variable $y$. Thus no point $(x,y)$, at which we can change $y$ without changing $x$, can be a strict local minimum.

This effect can also be observed in the CC-SOSC:
The matrix in \eqref{eq:ccossc-inequality} depends only on the $x$-variable and thus on the $d_x$-part of a critical direction $d = (d_x,d_y)$, whereas the set of critical directions depends on both $x$ and $y$.
For this reason, we have to exclude all critical directions $d =(d_x,d_y)$ with $d_x\not=0$ from the strict inequality \eqref{eq:ccossc-inequality}. 
In contrast, in the SOSC from nonlinear programming and similar results for MPCCs, see for example \cite{NW2006} and \cite{LPR1996}, only the vector $d = (d_x,d_y) = (0,0)$ is excluded from the condition.

Indeed, whenever the cardinality constraint is inactive in a local minimum, one can find critical directions with $d_x=0$, $d_y\not=0$.
For these directions the strict inequality \eqref{eq:ccossc-inequality} cannot hold.
Thus excluding only the vector $(d_x,d_y)=(0,0)$ from \eqref{eq:ccossc-inequality} would lead to a condition which is rarely satisfied.
The following example illustrates this point.

\begin{example}
	Consider the cardinality constrained optimization problem
	\begin{equation*}
		\begin{aligned}
			\min\limits_{x\in\R^2} f(x)=x_1^2+x_2^2 \st g(x)=x_1^2+x_2^2-1\leq0,\ \vn{x}_0\leq1.\\
		\end{aligned}
	\end{equation*}
	The point $x^*=(0,0)$ is the strict (global) minimum of this problem and together with $y^*=(1,0)$ the point $(x^*,y^*)$ is a solution for the continuous reformulation
	\begin{equation*}
		\begin{aligned}
			\min\limits_{(x,y)} f(x)=x_1^2+x_2^2 \st & g(x)=x_1^2+x_2^2-1\leq0,&\\
			& 0\leq y_i\leq1,\ x_i\cdot y_i=0& \forall i=1,2,\\
			& y_1+y_2\geq 1. &
		\end{aligned}
	\end{equation*}
	We have $I_g(x^*)=\emptyset$, $I_0(x^*)=\{1,2\}$ and thus CC-LICQ is fulfilled in $(x^*,y^*)$.
	Hence $(x^*,y^*)$ is an S-stationary point.
	The, due to CC-LICQ unique, S-stationary multipliers are $\lambda^*=\gamma^*=0$.
	Since $\nabla f(x^*)=0$, the critical cone is given by 
	\begin{align*}
		\cccc{Z}{x^*,y^*} = \cclc{Z}{x^*,y^*} = \{(d_x,d_y)\in\R^n\mid & (d_y)_1+(d_y)_2\geq0,\\
		& (d_x)_1=0,\\
		& (d_y)_1\leq0, \quad   (d_y)_2\geq0,\\
		& (d_x)_2\cdot(d_y)_2=0\}
	\end{align*}
	The Hessian in the CC-SOSC condition \eqref{eq:ccossc-inequality} consists only of
	\begin{equation*}
		\nabla^2 f(x^*)=
		\begin{pmatrix}
			2 & 0 \\
			0 & 2
		\end{pmatrix}.
	\end{equation*}
	However, we can choose $(d_x,d_y)=((0,0),(0,1))\in\cccc{Z}{x^*,y^*}\setminus\{0\}$ such that the condition $d_x^T\nabla^2 f(x^*)d_x>0$ is violated.
\end{example}

\subsection{Local uniqueness of M-stationary points using second order information}

While the proofs of Theorems \ref{thm:cc-ssonc} and \ref{thm:cc-ssosc} cannot be transferred directly to M-stationary points of \eqref{eq:reformulation}, we are able to show that an M-stationary point is locally unique, if CC-CPLD and a second order condition hold.
We follow a line of argument by Guo, Lin and Ye \cite{GLY2013}. 
To simplify the presentation of the proof of Theorem \ref{thm:cc-m-isolated} we show the following auxiliary result first.

\begin{proposition}\label{seq-m-multipliers}
	Let $(x^*,y^*)\in Z$ be feasible point of \eqref{eq:reformulation} and $(x^k,y^k)_{k}\subseteq Z$ be a sequence of M-stationary points of \eqref{eq:reformulation} converging to $(x^*,y^*)$.
	\begin{enumerate}[label=(\alph*)]
		\item\label{thm:seq-m-multipliers:part-cc-cpld} If CC-CPLD holds in $(x^*,y^*)$, then $(x^*,y^*)$ is M-stationary and one can find a bounded sequence $(\lambda^k,\mu^k,\gamma^k)_k$ of M-stationary multipliers of $(x^k,y^k)$ such that every accumulation point $(\lambda^*,\mu^*,\gamma^*)$ is an M-stationary multiplier of $(x^*,y^*)$.

		\item If even CC-MFCQ holds in $(x^*,y^*)$, the every sequence $(\lambda^k,\mu^k,\gamma^k)_k$ of M-stationary multipliers of $(x^k,y^k)$ is bounded and every accumulation point $(\lambda^*,\mu^*,\gamma^*)$ is an M-stationary multiplier of $(x^*,y^*)$.
	\end{enumerate}
\end{proposition}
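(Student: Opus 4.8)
The plan is to treat the two parts together by first writing down, for each $k$, the M-stationarity system of $(x^k,y^k)$ and then passing to the limit. For each $k$ there exist multipliers $(\lambda^k,\mu^k,\gamma^k)$ with $\lambda^k_i \geq 0$, $\lambda^k_i g_i(x^k) = 0$ for all $i$, $\gamma^k_i = 0$ for all $i \in I_{\pm 0}(x^k,y^k)$, and
\[
	\nabla f(x^k) + \sum_{i=1}^m \lambda^k_i \nabla g_i(x^k) + \sum_{i=1}^p \mu^k_i \nabla h_i(x^k) + \sum_{i=1}^n \gamma^k_i e_i = 0.
\]
First I would observe that, by continuity of $g$ and of the active-index structure, for all large $k$ one has $I_g(x^k) \subseteq I_g(x^*)$ and $I_0(x^k) \subseteq I_0(x^*)$, hence $\lambda^k_i = 0$ for $i \notin I_g(x^*)$ and $\gamma^k_i = 0$ for $i \notin I_0(x^*)$. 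Moreover $I_{\pm 0}(x^*,y^*) \subseteq I_{\pm 0}(x^k,y^k)$ for large $k$ (if $x^*_i \neq 0$ and $y^*_i = 0$ then $x^k_i \neq 0$ and, since $x^k_i y^k_i = 0$, $y^k_i = 0$), so $\gamma^k_i = 0$ for $i \in I_{\pm 0}(x^*,y^*)$ for all large $k$. Thus every accumulation point of the multiplier sequence, once we know one exists, will automatically be an M-stationary multiplier of $(x^*,y^*)$: the sign conditions and complementarity $\lambda^*_i g_i(x^*) = 0$ pass to the limit by continuity, and the support restrictions on $\lambda^*$ and $\gamma^*$ are inherited from the eventual restrictions just noted.

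The crux is therefore boundedness of the multipliers, and this is exactly where the two parts diverge. For part (b), under CC-MFCQ at $(x^*,y^*)$: Remark \ref{rem:cc-mfcq-neighborhood} gives that CC-MFCQ holds at all $(x^k,y^k)$ with $x^k$ near $x^*$, i.e. the gradients $\nabla g_i(x^k)$ ($i \in I_g(x^*)$), $\nabla h_i(x^k)$ ($i = 1,\dots,p$), $e_i$ ($i \in I_0(x^*)$) are positively linearly independent for large $k$. A standard normalization argument then forces boundedness: if $\|(\lambda^k,\mu^k,\gamma^k)\| \to \infty$ along a subsequence, divide the stationarity equation by this norm; the $\nabla f(x^k)$ term vanishes in the limit, the normalized multipliers converge (after a further subsequence) to some $(\bar\lambda,\bar\mu,\bar\gamma) \neq 0$ with $\bar\lambda \geq 0$ and the correct supports, and one obtains a nontrivial positive linear dependence of the limiting gradients $\nabla g_i(x^*)$, $\nabla h_i(x^*)$, $e_i$ — contradicting CC-MFCQ at $(x^*,y^*)$. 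Hence every such multiplier sequence is bounded, and combined with the first paragraph this proves (b), and in particular that $(x^*,y^*)$ is M-stationary.

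For part (a), under only CC-CPLD, one cannot expect an arbitrary multiplier sequence to be bounded, so the strategy is instead to \emph{construct} a good bounded sequence. For each $k$ I would apply a Carathéodory-type reduction to the stationarity equation of $(x^k,y^k)$: given any M-stationary multiplier, there is another one whose active gradients $\{\nabla g_i(x^k)\}_{i \in I_1^k}$, $\{\nabla h_i(x^k)\}_{i \in I_2^k}$, $\{e_i\}_{i \in I_3^k}$ (the indices where the new multiplier is nonzero, respecting $\lambda \geq 0$) are positively linearly independent, with $I_1^k \subseteq I_g(x^*)$, $I_2^k \subseteq \{1,\dots,p\}$, $I_3^k \subseteq I_0(x^*)$. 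Since there are only finitely many such index triples, pass to a subsequence on which $(I_1^k,I_2^k,I_3^k) \equiv (I_1,I_2,I_3)$ is constant. Now suppose this reduced multiplier sequence were unbounded; normalize as before to get a nontrivial limit $(\bar\lambda,\bar\mu,\bar\gamma) \neq 0$ supported on $(I_1,I_2,I_3)$ with $\bar\lambda \geq 0$, giving a positive linear dependence of $\nabla g_i(x^*)$ ($i \in I_1$), $\nabla h_i(x^*)$ ($i \in I_2$), $e_i$ ($i \in I_3$). By CC-CPLD these gradients remain linearly dependent in a neighbourhood of $x^*$, in particular $\{\nabla g_i(x^k)\}_{i\in I_1} \cup \{\nabla h_i(x^k)\}_{i\in I_2} \cup \{e_i\}_{i\in I_3}$ are linearly dependent for all large $k$ — contradicting the positive linear independence secured by the Carathéodory reduction. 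So the reduced sequence is bounded; any accumulation point is an M-stationary multiplier of $(x^*,y^*)$ by the first paragraph, so $(x^*,y^*)$ is M-stationary, which proves (a).

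The main obstacle is the Carathéodory reduction step in part (a): one must verify that the standard "minimal support / positively linearly independent generators" lemma applies with the correct sign convention (nonnegativity only on the $g$-multipliers $\lambda$, free sign on $\mu$ and $\gamma$) and that the reduced index sets stay inside $I_g(x^*)$, $\{1,\dots,p\}$, $I_0(x^*)$ so that CC-CPLD at $x^*$ is actually applicable. The rest is the routine normalization-and-limit argument, using continuity of the data and the index-inclusion facts from the first paragraph.
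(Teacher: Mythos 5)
Your overall strategy coincides with the paper's: write down the M-stationarity systems at $(x^k,y^k)$, replace the multipliers by ones whose supporting gradients form an independent family (the paper does this via \cite[Lemma A.1]{SU2010}), prove boundedness by a normalization argument whose nonzero limit produces a positive linear dependence at $x^*$ that CC-CPLD propagates to linear dependence near $x^*$, and then pass to the limit to obtain M-stationarity of $(x^*,y^*)$; part (b) is the same normalization argument without any reduction, contradicting CC-MFCQ directly. Your preliminary observations (the inclusions $I_g(x^k)\subseteq I_g(x^*)$ and $I_0(x^k)\subseteq I_0(x^*)$ for large $k$, and $\gamma^k_i=0$ for $i\in I_{\pm0}(x^*,y^*)$ for large $k$, so that sign, complementarity and support conditions pass to the limit) are correct and match the paper.

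The step that does not close as written is the final contradiction in part (a). You state that the Carath\'eodory-type reduction yields multipliers whose supporting gradients are \emph{positively} linearly independent, and you then contradict this with the linear dependence delivered by CC-CPLD at $x^k$. But linear dependence does not contradict positive linear independence: for example the vectors $(1,0)$, $(1,1)$, $(1,2)$, all placed in the nonnegative-multiplier group, are positively linearly independent yet linearly dependent. What the reduction lemma actually provides (and what the paper uses, quoting \cite[Lemma A.1]{SU2010}) is the stronger conclusion that the gradients carrying nonzero multipliers are \emph{linearly} independent, with the sign constraint $\lambda\geq 0$ preserved and the supports only shrinking, so that they stay inside $I_g(x^k)\subseteq I_g(x^*)$ and $I_0(x^k)\subseteq I_0(x^*)$. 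With that version, CC-CPLD applied to the family indexed by the support of the normalized limit (which is contained in the supports of $(\lambda^k,\mu^k,\gamma^k)$ for all large $k$) yields linear dependence of a subfamily of gradients at $x^k$, genuinely contradicting their linear independence, and your argument becomes exactly the paper's proof. So the flaw is confined to the stated conclusion of the reduction lemma, but as written the contradiction you invoke in (a) is not valid and needs this repair.
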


\begin{proof}
	We begin by verifying (a).
	Since $(x^k,y^k)$ are M-stationary points of \eqref{eq:reformulation}, there exist multipliers $(\lambda^k,\mu^k,\gamma^k)$ with
	\begin{align}
		&\nabla f(x^k) + \sum\limits_{i\in I_g(x^k)}\lambda_i^k\nabla g_i(x^k)+\sum\limits_{i=1}^p\mu_i^k\nabla h_i(x^k)+\sum\limits_{i\in I_0(x^k)}\gamma_i^ke_i=0,\label{lem:cc-m-isolated:multiplk1}\\
		&\lambda_i^k\geq0,\quad \lambda_i^kg_i(x^k)=0,\quad\forall i=1,\dots,m,\label{lem:cc-m-isolated:multiplk2}\\
		&\gamma_i^k=0,\quad\forall i\in I_{\pm0}(x^k,y^k).\label{lem:cc-m-isolated:multiplk3}
	\end{align}

	Without loss of generality, we may additionally assume that the vectors
	\begin{equation}\label{eq:cc-m-isolated:independence}
		\nabla g_i(x^k) \  (i \in \supp(\lambda^k)), \quad  \nabla h_i(x^k) \  (i \in \supp(\mu^k)), \quad   e_i \  (i \in \supp(\gamma^k))
	\end{equation}
	are linearly independent.
	Otherwise, the multipliers can be modified according to \cite[Lemma A.1]{SU2010}.

	We show that the sequence $(\lambda^k, \mu^k, \gamma^k)_k$ is bounded and thus has a convergent subsequence.
	To do so, assume for contradiction $\|(\lambda^k, \mu^k, \gamma^k)\| \to \infty$.
	Then the normed sequence
	\[
		\left(\frac{(\lambda^k, \mu^k, \gamma^k)}{\|(\lambda^k, \mu^k, \gamma^k)\|}\right)_{k\in\N}
	\]
	is bounded and thus (at least on a subsequence) convergent to some nonzero limit $(\bar \lambda, \bar \mu, \bar \gamma) \neq 0$.
	This limit then satisfies $\bar \lambda \geq 0$ and $\bar \lambda_i = 0$ for all $i \notin I_g(x^*)$ since then $g_i(x^k) < 0$ and thus $\lambda^k_i = 0$ for all $k$ sufficiently large.
	Similarly, we know $\bar \gamma_i = 0$ for all $i\notin I_0(x^*)$ since then $x^k_i \neq 0$ and thus $\gamma^k_i = 0$ for all $k$ sufficiently large.
	Additionally, we obtain
	\[
	 	\sum\limits_{i\in I_g(x^*)}\bar \lambda_i\nabla g_i(x^*)+\sum\limits_{i=1}^p\bar\mu_i\nabla h_i(x^*)+\sum\limits_{i\in I_0(x^k)}\bar\gamma_i e_i=0
	\] 
	from \eqref{lem:cc-m-isolated:multiplk1}.
	Consequently, the vectors
	\[
		\nabla g_i(x) \  (i \in \supp(\bar \lambda)) \quad \text{and} \quad  \nabla h_i(x) \  (i \in \supp(\bar \mu)), \quad   e_i \  (i \in \supp(\bar \gamma))
	\]
	are positively linearly dependent in $x^*$ and thus by CC-CPLD have to remain linearly dependent in a neighbourhood.
	Due to
	\[
		\supp(\bar \lambda) \subseteq  \supp(\lambda^k), \quad \supp(\bar \mu)) \subseteq  \supp(\mu^k), \quad \supp(\bar \gamma)) \subseteq  \supp(\gamma^k)
	\]
	for all $k$ sufficiently large, we obtain a contradiction to the choice of the multipliers $(\lambda^k, \mu^k, \gamma^k)$ in \eqref{eq:cc-m-isolated:independence}.

	Thus, the sequence $(\lambda^k, \mu^k, \gamma^k)_k$ is bounded and therefore convergent to some limit $(\lambda^*,\mu^*,\gamma^*)$ on a subsequence.

 	Since $f$, $g$ and $h$ are continuously differentiable, we have
 	\[
 		\nabla f(x^*)+\sum\limits_{i=1}^m\lambda_i^*g_i(x^*)+\sum\limits_{i=1}^p\mu_i^*\nabla h_i(x^*)+\sum\limits_{i=1}^n\gamma_i^*e_i = 0.
 	\]
 	Analogously to our previous arguments one sees that $\lambda^* \geq 0$ and $\supp(\lambda^*) \subseteq I_g(x^*)$ as well as $\supp(\gamma^*) \subseteq I_0(x^*)$.
 	Thus, $(x^*,y^*)$ together with the multipliers $(\lambda^*,\mu^*,\gamma^*)$ is M-stationary.

 	To verify part (b) one only has to observe that under the assumption of CC-MFCQ it is not necessary to modify the multipliers to guarantee \eqref{eq:cc-m-isolated:independence} in order to obtain a contradiction. \qed
\end{proof}

The previous result states that the limit of every convergent sequence of M-stationary points is also M-stationary.
This plays a major role in the proof of the following uniqueness theorem for M-stationary points.
In this result, we need an assumption which is closely related to CC-SOSC, but stronger since condition \eqref{eq:ccossc-inequality} now has to hold for all M-stationary multipliers, not only one S-stationary multiplier.
Under CC-LICQ however the M-stationary multiplier is unique.
The following result and its proof is motivated by a similar result for MPCCs \cite{GLY2013}.

\begin{theorem}[Local uniqueness of M-stationary points]\label{thm:cc-m-isolated}
	Let $f,g,h$ be twice continuously differentiable, $(x^*,y^*)$ be an M-stationary point of \eqref{eq:reformulation} satisfying CC-CPLD, and let
	\begin{equation*}
		d_x^T\left(\nabla^2f(x^*)+\sum\limits_{i=1}^m\lambda_i\nabla^2g_i(x^*)+\sum\limits_{i=1}^p\mu_i\nabla^2h_i(x^*)\right)d_x>0
	\end{equation*}
	hold for all $(d_x,d_y)\in\cccc{Z}{x^*,y^*}$ with $d_x\not=0$ and \underline{all} M-stationary multipliers $(\lambda,\mu,\gamma)$ of $(x^*,y^*)$.
	Then there exists a radius $r>0$ such that 
	\begin{equation*}
		\forall (x,y)\in Z \cap B_r(x^*,y^*):\ \left[(x,y) \text{ is M-stationary } \Rightarrow x=x^*\right].
	\end{equation*} 
\end{theorem}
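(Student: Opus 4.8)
The plan is to argue by contradiction in the classical style of isolatedness results: assume there is no such radius $r$, produce a sequence of M-stationary points $(x^k,y^k) \to (x^*,y^*)$ with $x^k \neq x^*$, and derive a contradiction with the second order condition. First I would invoke Proposition \ref{seq-m-multipliers}(a): since CC-CPLD holds at $(x^*,y^*)$, the sequence $(x^k,y^k)$ of M-stationary points admits a bounded sequence of M-stationary multipliers $(\lambda^k,\mu^k,\gamma^k)$, and passing to a subsequence we may assume $(\lambda^k,\mu^k,\gamma^k) \to (\lambda^*,\mu^*,\gamma^*)$, which is an M-stationary multiplier of $(x^*,y^*)$. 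Next, mimicking the start of the proof of Theorem \ref{thm:cc-ssosc}, set $d_x^k := (x^k-x^*)/\|x^k-x^*\|$ and $d_y^k := (y^k-y^*)/\|(x^k,y^k)-(x^*,y^*)\|$; these are bounded, so after a further subsequence $d^k = (d_x^k,d_y^k) \to d = (d_x,d_y)$ with $\|d_x\| = 1$, hence $d_x \neq 0$.

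The second step is to show $d \in \cccc{Z}{x^*,y^*}$. This is essentially a verbatim repeat of the membership argument in the proof of Theorem \ref{thm:cc-ssosc}: using the mean value theorem on each active $g_i$ and on each $h_i$, the feasibility relations $e^T y^k \geq n-\kappa$, the eventual sign patterns on $I_{\pm 0}$, $I_{00}$, $I_{01}$, $I_{0+}$, and finally the orthogonality $x^k_i y^k_i = 0$ on $I_{00}$, one gets $d \in \cclc{Z}{x^*,y^*}$. For the critical-cone part, I would use the M-stationarity of $(x^k,y^k)$: from \eqref{lem:cc-m-isolated:multiplk1} and $\lambda^k \geq 0$, contract $\nabla f(x^k)^T(x^k-x^*)$ against the multiplier identity and exploit that along the sequence the feasibility of $(x^k,y^k)$ forces $\sum_i \lambda^k_i \nabla g_i(x^k)^T(x^k-x^*) \geq$ a manageable quantity; dividing by $\|x^k-x^*\|$ and taking limits yields $\nabla f(x^*)^T d_x \leq 0$, so $d \in \cccc{Z}{x^*,y^*}$. (Alternatively one argues directly that $f(x^k)$ need not decrease here, so the cleaner route is: since $(x^*,y^*)$ and the $(x^k,y^k)$ are all M-stationary with multipliers converging to $(\lambda^*,\mu^*,\gamma^*)$, the Lagrangian-type function $\ell(x) := f(x) + \sum \lambda^*_i g_i(x) + \sum \mu^*_i h_i(x) + \sum \gamma^*_i x_i$ has $\nabla \ell(x^*) = 0$, and one shows $\nabla f(x^*)^T d_x \le 0$ because the linearised constraints defining $\cclc{Z}{x^*,y^*}$ together with $\nabla \ell(x^*)=0$ force the sign — see Proposition \ref{prop:cc-cone-multiplier-description}; the delicate point is the $I_{00}$ indices, where $\gamma^*_i e_i^T d_x$ need not vanish, which is exactly why we only get M- and not S-level conclusions.)

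The third and final step is the second order contradiction, following the Taylor expansion used in Theorem \ref{thm:cc-ssosc}. Define $\ell^k(x) := f(x) + \sum_i \lambda^k_i g_i(x) + \sum_i \mu^k_i h_i(x) + \sum_i \gamma^k_i x_i$; by M-stationarity $\nabla \ell^k(x^k) = 0$, and by feasibility of $(x^k,y^k)$ together with the complementarity relations one has $\ell^k(x^k) = f(x^k)$ and $\ell^k(x^*) \geq f(x^*) \geq f(x^k)$ — here the key is that $\sum_i \gamma^k_i x^*_i = 0$ because $\gamma^k_i = 0$ on $\supp(x^*) \subseteq \{i : x^*_i \neq 0\} \subseteq I_{\pm 0}(x^k,y^k) \cup \{\text{indices where } x^k_i\neq 0\}$ for large $k$, wait — more carefully: for $i \in \supp(x^*)$ we have $x^k_i \neq 0$ for large $k$ so $\gamma^k_i = 0$ by \eqref{lem:cc-m-isolated:multiplk3} applied at indices where additionally $y^k_i = 0$, and where $y^k_i > 0$ one has $x^*_i = 0$; in any case $\sum_i \gamma^k_i x^*_i = 0$. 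Then a Taylor expansion of $\ell^k$ around $x^k$ evaluated at $x^*$ (with a point $\xi^k \in [x^k,x^*]$) gives $\ell^k(x^*) - \ell^k(x^k) = \tfrac12 (x^*-x^k)^T \nabla^2 \ell^k(\xi^k)(x^*-x^k) \geq 0$, so dividing by $\|x^k-x^*\|^2$, using $\lambda^k,\mu^k,\gamma^k \to \lambda^*,\mu^*,\gamma^*$, $\xi^k \to x^*$, and $d^k_x \to d_x$, we obtain
\[
	d_x^T\left(\nabla^2 f(x^*) + \sum_{i=1}^m \lambda^*_i \nabla^2 g_i(x^*) + \sum_{i=1}^p \mu^*_i \nabla^2 h_i(x^*)\right) d_x \leq 0,
\]
contradicting the hypothesis, since $(\lambda^*,\mu^*,\gamma^*)$ is an M-stationary multiplier of $(x^*,y^*)$ and $d_x \neq 0$.

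I expect the main obstacle to be the correct handling of the linear term $\sum_i \gamma^k_i x^*_i$ and $\sum_i \gamma^k_i x^k_i$ in passing between $\ell^k$ and $f$ — unlike the S-stationary case in Theorem \ref{thm:cc-ssosc}, here $\gamma^k_i$ may be nonzero on $I_{00}$-type indices, so one must track carefully which indices contribute. The resolution is that $\gamma^k_i x^k_i = 0$ for all $i$ (either $\gamma^k_i = 0$ when $x^k_i \neq 0$ via \eqref{lem:cc-m-isolated:multiplk3} and the orthogonality of the reformulation, or $x^k_i = 0$), and $\gamma^k_i x^*_i = 0$ for all $i$ once $k$ is large enough (for $i \in \supp(x^*)$, $x^k_i \neq 0$ hence $\gamma^k_i = 0$; otherwise $x^*_i = 0$); this keeps the identities $\ell^k(x^k) = f(x^k)$ and $\ell^k(x^*) \geq f(x^*)$ intact. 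A secondary subtlety is establishing $\nabla f(x^*)^T d_x \leq 0$ for membership in the critical cone without the descent property $f(x^k) \leq f(x^*)$ that was available in Theorem \ref{thm:cc-ssosc}; this is handled by contracting the multiplier identity \eqref{lem:cc-m-isolated:multiplk1} with $x^k - x^*$, using $\lambda^k \geq 0$ with $\nabla g_i(x^k)^T(x^k-x^*) \leq o(\|x^k-x^*\|)$ on active indices (and $= 0$ asymptotically for $i \in I_{g+}$), the constraint $h_i \equiv 0$, and the structure of the $\gamma^k$-terms, then dividing by $\|x^k-x^*\|$ and passing to the limit.
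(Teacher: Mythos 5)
Your setup follows the paper: contradiction sequence of M-stationary points $(x^k,y^k)\to(x^*,y^*)$ with $x^k\neq x^*$, Proposition \ref{seq-m-multipliers}(a) to get convergent multipliers, normalized directions with limit $d=(d_x,d_y)$, $d_x\neq 0$, and membership in the CC-linearisation cone as in Theorem \ref{thm:cc-ssosc}. Your first route to $\nabla f(x^*)^Td_x\leq 0$ (contracting the M-stationarity identity at $x^k$ with $x^k-x^*$ and using that $\gamma_i^k x_i^k=\gamma_i^k x_i^*=0$ and $\lambda_i^k g_i(x^k)=\lambda_i^k g_i(x^*)=0$ for large $k$) is essentially the paper's first-order Taylor argument and is fine, modulo a sign slip on the active-$g_i$ estimate; it actually yields $\nabla f(x^*)^Td_x=0$.

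The genuine gap is in your final step. You use the chain $\ell^k(x^*)\geq f(x^*)\geq f(x^k)=\ell^k(x^k)$, but the middle inequality $f(x^*)\geq f(x^k)$ is simply not available here: in Theorem \ref{thm:cc-ssosc} it came from negating strict local minimality, whereas in this theorem the $(x^k,y^k)$ are only assumed M-stationary, with no relation between $f(x^k)$ and $f(x^*)$. In fact the cross-complementarity relations give the exact identities $\ell^k(x^*)=f(x^*)$ and $\ell^k(x^k)=f(x^k)$ for large $k$, so your one-sided expansion only says $\tfrac12(x^*-x^k)^T\nabla^2\ell^k(\xi^k)(x^*-x^k)=f(x^*)-f(x^k)$, a quantity of unknown sign (and note the internal inconsistency: from ``$\geq 0$'' you could at best conclude $d_x^THd_x\geq 0$, which does not contradict the strict inequality in the hypothesis). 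The paper avoids any objective-value comparison by exploiting stationarity at \emph{both} endpoints: it forms the scalar function $s_k(t)$ from interpolated points and multipliers, shows $s_k(0)=s_k(1)=0$ via the complementarity identities, and applies the mean value theorem, so the quadratic form with interpolated data vanishes at some $\tau_k$ and in the limit $d_x^TH^*d_x=0$ with $H^*:=\nabla^2f(x^*)+\sum_{i=1}^m\lambda_i^*\nabla^2g_i(x^*)+\sum_{i=1}^p\mu_i^*\nabla^2h_i(x^*)$, the desired contradiction. Your argument can be repaired in the same spirit: besides expanding $\ell^k$ around $x^k$ at $x^*$, also expand $\ell^*(x):=f(x)+\sum_i\lambda_i^*g_i(x)+\sum_i\mu_i^*h_i(x)+\sum_i\gamma_i^*x_i$ around $x^*$ at $x^k$ (where $\nabla\ell^*(x^*)=0$ and $\ell^*(x^k)-\ell^*(x^*)=f(x^k)-f(x^*)$ by the cross-complementarity), add the two expansions so the unknown difference $f(x^*)-f(x^k)$ cancels, divide by $\|x^k-x^*\|^2$ and pass to the limit to obtain $d_x^TH^*d_x=0$. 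Without such a two-sided device the proof as written does not go through.
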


\begin{proof}
	Assume that the claim is false.
	Then there exists a sequence $(x^k,y^k)_{k\in\N}\subset Z$ of M-stationary points converging to $(x^*,y^*)$ with $x^k\not=x^*$.
	By Proposition \ref{seq-m-multipliers}\ref{thm:seq-m-multipliers:part-cc-cpld} we can assume without loss of generality that the corresponding M-stationary multipliers $(\lambda^k,\mu^k,\gamma^k)$ are convergent, too, and that the limit $(\lambda^*,\mu^*,\gamma^*)$ is an M-stationary multiplier for $(x^*,y^*)$, i.e.
	\begin{align}
		&\nabla f(x^*)+\sum\limits_{i=1}^m\lambda_i^*\nabla g_i(x^*)+\sum\limits_{i=1}^p\mu_i^*\nabla h_i(x^*) + \sum\limits_{i=1}^n \gamma_i^*e_i = 0, \notag\\
		&g_i(x^*)\leq0,\ \lambda_i^*\geq0,\ \lambda_i^*g_i(x^*)=0,\quad\forall i=1,\dots,m,\label{thm:cc-m-isolated:multipl*} \\
		&h_i(x^*)=0,\quad\forall i=1,\dots,p,\notag\\
		&\gamma_i^*=0,\quad\forall i\in I_{\pm0}(x^*,y^*).\notag
	\end{align}
	For $k\in\N$ define the directions $d^k = (d_x^k,d_y^k)$ by
	\begin{align*}
		d_x^k \coloneqq \frac{x^k-x^*}{\vn{x^k-x^*}},\quad\quad d_y^k\coloneqq\frac{y^k-y^*}{\vn{(x^k,y^k)-(x^*,y^*)}}.
	\end{align*}
	We have $\vn{d_x^k}=1$ and $\vn{d_y^k}\leq1$ for all $k\in\N$.
	Hence the sequences are bounded and we can assume without loss of generality that $d^k = (d_x^k,d_y^k)$ is convergent.
	Denote the limit by $d = (d_x,d_y)$.
	Since $\vn{d_x^k}=1$ for all $k\in\N$, we have $d_x\not=0$.
	
	Furthermore we can show $(d_x,d_y)\in\cclc{Z}{x^*,y^*}$ analogously to the proof of Theorem \ref{thm:cc-ssosc}.
	
	Before we show $\nabla f(x^*)^Td_x\leq0$, we derive four helpful equations.
	Since $(\lambda^k,\mu^k,\gamma^k)$ is a M-stationary multiplier for $(x^k,y^k)$, we have
	\begin{equation}\label{thm:cc-m-isolated:lagrangekk}
		\sum\limits_{i=1}^m\lambda_i^k g_i(x^k)+\sum\limits_{i=1}^p\mu_i^k h_i(x^k)+\sum\limits_{i=1}^n\gamma_i^k x_i^k=0
	\end{equation}
	for all $k\in\N$. 
	Because of the continuity of $g_i$ and the properties of the multipliers $(\lambda^k,\mu^k,\gamma^k)$ and $(\lambda^*,\mu^*,\gamma^*)$, the implications 
	\begin{align*} 
		g_i(x^*)\not=0\ \Rightarrow\ g_i(x^k)\not=0\ \Rightarrow\ \lambda_i^k=0, \\
		x_i^*\not=0\ \Rightarrow\ x_i^k\not=0\ \Rightarrow\ \gamma_i^k=0,\\
		\lambda_i^*\not=0\ \Rightarrow\ \lambda_i^k\not=0\ \Rightarrow\ g_i(x^k)=0,\\
		\gamma_i^*\not=0,\ \Rightarrow\ \gamma_i^k\not=0\ \Rightarrow\ x_i^k=0,
	\end{align*}
	hold for sufficiently large $k$.
	Hence we also have
	\begin{align}
		\label{thm:cc-m-isolated:lagrangek*} \sum\limits_{i=1}^m\lambda_i^k g_i(x^*)+\sum\limits_{i=1}^p\mu_i^k h_i(x^*)+\sum\limits_{i=1}^n\gamma_i^k x_i^*=0,\\
		\label{thm:cc-m-isolated:lagrange*k} \sum\limits_{i=1}^m\lambda_i^* g_i(x^k)+\sum\limits_{i=1}^p\mu_i^* h_i(x^k)+\sum\limits_{i=1}^n\gamma_i^* x_i^k=0,
	\end{align}
	for all $k$ sufficiently large.
	Define $\ell:\R^n\times\R^m\times\R^p\times\R^n\rightarrow\R$ by 
	\begin{equation*}
		(x,\lambda,\mu,\gamma)\mapsto \ell(x,\lambda,\mu,\gamma)\coloneqq \sum\limits_{i=1}^m\lambda_i g_i(x)+\sum\limits_{i=1}^p\mu_i h_i(x)+\sum\limits_{i=1}^n\gamma_i x_i.
	\end{equation*}
	
	A first order Taylor-expansion of $x\mapsto\ell(x,\lambda^k,\mu^k,\gamma^k)$ around $x^*$ evaluated at $x^k$ yields
	\begin{align*}
		0 & = \ell(x^k,\lambda^k,\mu^k,\gamma^k)\\
		& = \ell(x^*,\lambda^k,\mu^k,\gamma^k)+\nabla \ell(x^*,\lambda^k,\mu^k,\gamma^k)(x^k-x^*)+\text{o}(\vn{x^k-x^*})\\
		& = \nabla \ell(x^*,\lambda^k,\mu^k,\gamma^k)(x^k-x^*)+\hbox{o}(\vn{x^k-x^*})
	\end{align*}
	for sufficiently large $k$.
	here, we used \eqref{thm:cc-m-isolated:lagrangekk} and \eqref{thm:cc-m-isolated:lagrangek*}.
	By dividing through $\vn{x^k-x^*}$ and letting $k$ tend to infinity we get
	\begin{equation}\label{thm:cc-m-isolated:gradient-l-**}
		0 = \nabla \ell(x^*,\lambda^*,\mu^*,\gamma^*)d_x.
	\end{equation}
	Using this together with the M-stationarity of $(x^*,y^*)$ we can calculate
	\begin{align*}
		\nabla f(x^*)d_x & = -\left(\sum\limits_{i=1}^m\lambda_i^*\nabla g_i(x^*)+\sum\limits_{i=1}^p\mu_i^*\nabla h_i(x^*) + \sum\limits_{i=1}^n \gamma_i^*e_i^T\right)d_x\\
		& = -\nabla\ell(x^*,\lambda^*,\mu^*,\gamma^*))d_x = 0.
	\end{align*}
	Because $(d_x,d_y)\in\cclc{Z}{x^*,y^*}$ we consequently have verified $(d_x,d_y)\in \cccc{Z}{x^*,y^*}$.
	
	To keep the notation more compact, define $\omega$ as an abbreviation for the multipliers $\omega := (\lambda, \mu, \gamma)$.
	For $k\in\N$ define the functions
	\begin{align*}
		\bar x^k:[0,1]\rightarrow \R^n,&\quad t\mapsto\bar x(t)\coloneqq x^*+t\cdot(x^k-x^*),\\
		\bar \omega^k:[0,1]\rightarrow \R^{m+p+n},&\quad t\mapsto\bar \omega(t)\coloneqq \omega^*+t\cdot(\omega^k-\omega^*),
	\end{align*}
	and $s_k:[0,1]\rightarrow \R$ by
	\begin{align*}
		s_k(t)\coloneqq \big(\nabla f(\bar x^k(t))+\nabla\ell\left(\bar x^k(t),\bar \omega^k(t)\right)\big)^T(x^k-x^*)-\ell\left(\bar x^k(t),\omega^k-\omega^*\right).
	\end{align*}
	Using \eqref{thm:cc-m-isolated:multipl*}-\eqref{thm:cc-m-isolated:lagrange*k} and the fact that $\omega^k = (\lambda^k,\mu^k,\gamma^k)$ is an M-stationary multiplier for $(x^k,y^k)$ we can calculate
	\begin{align*}
		s_k(0)  =&  \left(\nabla f(x^*)+\nabla\ell(x^*,\omega^*)\right)^T(x^k-x^*)-\ell(x^*,\omega^k-\omega^*)\\
		= & -\left(\sum\limits_{i=1}^m\lambda_i^k g_i(x^*)+\sum\limits_{i=1}^p\mu_i^k h_i(x^*)+\sum\limits_{i=1}^n\gamma_i^k x_i^*\right)\\
		&-\left(\sum\limits_{i=1}^m\lambda_i^* g_i(x^*)+\sum\limits_{i=1}^p\mu_i^*h_i(x^*)+\sum\limits_{i=1}^p\gamma_i^*x_i^*\right)\  = 0,\\
		s_k(1) =& \left(\nabla f(x^k)+\nabla\ell(x^k,\omega^k)\right)^T(x^k-x^*)-\ell(x^k,\omega^k-\omega^*)\\
		= & -\left(\sum\limits_{i=1}^m\lambda_i^k g_i(x^k)+\sum\limits_{i=1}^p\mu_i^kh_i(x^k)+\sum\limits_{i=1}^n\gamma_i^k x_i^k\right)\\
		& -\left(\sum\limits_{i=1}^m\lambda_i^*g_i(x^k)+\sum\limits_{i=1}^p\mu_i^*h_i(x^k)+\sum\limits_{i=1}^n\gamma_i^*x_i^k\right) \  = 0.
	\end{align*}
	The functions $s_k$ are twice continuously differentiable.
	The mean value theorem provides the existence of a $\tau_k\in(0,1)$ such that
	\begin{equation}\label{thm:cc-m-isolated:sk'}
		s_k'(\tau_k)=\frac{s_k(1)-s_k(0)}{1-0}=0.
	\end{equation}
	Using $(\bar x^k)'(t)=x^k-x^*$ and $(\bar \omega^k)'(t)=\omega^k-\omega^*$ it is straight forward to calculate
	{\small\begin{equation*}
		s_k'(\tau_k)=(x^k-x^*)^T\left(\nabla^2f(\bar x^k(\tau_k))+\sum\limits_{i=1}^m\bar\lambda_i^k(\tau_k)\nabla^2g_i(\bar x^k(\tau_k))+\sum\limits_{i=1}^p\mu_i^k(\tau_k)\nabla^2h_i(\bar x^k(\tau_k))\right)(x^k-x^*).
	\end{equation*}}
	Since $\tau_k$ is bounded we have $\bar x^k(\tau_k)\rightarrow x^*$, $\bar \omega^k(\tau_k) \rightarrow \omega^*$ for $k\rightarrow\infty$.
	It follows from \eqref{thm:cc-m-isolated:sk'} that
	{\small\begin{equation*}
		\frac{(x^k-x^*)^T}{\vn{x^k-x^*}}\left(\nabla^2f(\bar x^k(\tau))+\sum\limits_{i=1}^m\bar\lambda_i^k(\tau)\nabla^2g_i(\bar x^k(\tau))+\sum\limits_{i=1}^p\mu_i^*\nabla^2h_i(\bar x^k(\tau))\right)\frac{(x^k-x^*)}{\vn{x^k-x^*}} = 0
	\end{equation*}}
	for sufficiently large $k\in\N$ and thus for $k\rightarrow\infty$
	\begin{equation*}
		d_x^T\left(\nabla^2f(x^*)+\sum\limits_{i=1}^m\lambda_i^*\nabla^2g_i(x^*)+\sum\limits_{i=1}^p\mu_i^*\nabla^2h_i(x^*))\right)d_x = 0,
	\end{equation*}
	since the functions $f$, $g$ and $h$ are twice continuously differentiable.
	Because $(d_x,d_y)\in\cccc{Z}{x^*,y^*}$ and $d_x\not=0$, this is a contradiction to the theorem's assumption. \qed
\end{proof}

Since the definition of an M-stationary point is independent from $y$, we can formulate a result on uniqueness of M-stationary points directly for \eqref{eq:ccproblem}.

\begin{corollary}\label{cor:cc-m-isolated}
	Let $f,g,h$ be twice continuously differentiable. Let $x^*$ be feasible for \eqref{eq:ccproblem}, M-stationary, satisfy CC-CPLD and let
	\begin{equation*}
		d_x^T\left(\nabla^2f(x^*)+\sum\limits_{i=1}^m\lambda_i\nabla^2g_i(x^*)+\sum\limits_{i=1}^p\mu_i\nabla^2h_i(x^*)\right)d_x>0
	\end{equation*}
	hold for all $d_x\in \mathcal{C}_\mathcal{X}(x^*)$ with $d_x\not=0$ and all M-stationary multipliers $(\lambda,\mu,\gamma)$ of $x^*$.
	Then there exists a radius $r>0$ such that 
	\begin{equation*}
		\forall (x,y)\in Z \cap (B_r(x^*) \times \R^n):\ \left[(x,y) \text{ is M-stationary } \Rightarrow x=x^*\right].
	\end{equation*} 
\end{corollary}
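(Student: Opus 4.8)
The plan is to reduce the corollary to Theorem~\ref{thm:cc-m-isolated} by exploiting two facts already established in the excerpt: M-stationarity, the set of M-stationary multipliers, and CC-CPLD all depend only on $x^*$ and not on the auxiliary variable $y^*$; and for every $y^*$ with $(x^*,y^*)\in Z$ the $d_x$-projection of $\cccc{Z}{x^*,y^*}$ is contained in $\mathcal{C}_\mathcal{X}(x^*)$. The only genuine difference between the two statements is that Theorem~\ref{thm:cc-m-isolated} produces a ball $B_\rho(x^*,y^*)$ around a fixed pair, whereas here we need a ``tube'' $B_r(x^*)\times\R^n$; bridging this gap is where a compactness argument in the $y$-component enters.

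First I would argue by contradiction. If no such $r$ exists, then for each $k$ there is an M-stationary point $(x^k,y^k)\in Z$ with $x^k\in B_{1/k}(x^*)$ and $x^k\neq x^*$, so that $x^k\to x^*$. Since feasibility forces $0\leq y^k\leq e$, the sequence $(y^k)_k$ lies in a compact set; passing to a subsequence we may assume $y^k\to y^*$, and by closedness of $Z$ we get $(x^*,y^*)\in Z$. Next I would verify that the hypotheses of Theorem~\ref{thm:cc-m-isolated} hold at this particular pair $(x^*,y^*)$. CC-CPLD at $(x^*,y^*)$ is literally the same condition as CC-CPLD at $x^*$, since it only involves $I_g(x^*)$, $I_0(x^*)$ and the gradients of $g,h$. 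The M-stationary multipliers of $(x^*,y^*)$ coincide with the M-stationary multipliers of $x^*$, because $I_{\pm0}(x^*,y^*)=\supp(x^*)$ for every feasible $y^*$ (if $x_i^*\neq0$ then $x_i^*y_i^*=0$ forces $y_i^*=0$), so the side condition $\gamma_i=0$ for $i\in I_{\pm0}(x^*,y^*)$ is exactly $\gamma_i=0$ for $i\notin I_0(x^*)$. Finally, any $(d_x,d_y)\in\cccc{Z}{x^*,y^*}$ with $d_x\neq0$ satisfies $d_x\in\mathcal{L}_{\mathcal X}(x^*)$ by the projection description of $\mathcal{L}_{\mathcal X}(x^*)$ recorded before Definition~\ref{def:cc-critical-cone}, together with $\nabla f(x^*)^Td_x\leq0$, hence $d_x\in\mathcal{C}_\mathcal{X}(x^*)$. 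Therefore the second-order assumption of the corollary, stated in terms of $\mathcal{C}_\mathcal{X}(x^*)$ and the M-stationary multipliers of $x^*$, implies the second-order assumption of Theorem~\ref{thm:cc-m-isolated} at $(x^*,y^*)$.

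Applying Theorem~\ref{thm:cc-m-isolated} at $(x^*,y^*)$ then yields a radius $\rho>0$ such that every M-stationary point $(x,y)\in Z\cap B_\rho(x^*,y^*)$ satisfies $x=x^*$. Since $(x^k,y^k)\to(x^*,y^*)$, for all sufficiently large $k$ the point $(x^k,y^k)$ lies in $B_\rho(x^*,y^*)$ and is M-stationary, forcing $x^k=x^*$ and contradicting $x^k\neq x^*$. This proves the claim. I expect the only slightly delicate point to be the bookkeeping in the second paragraph, namely confirming that the critical-cone data and the multiplier sets genuinely transfer from the $\mathcal{C}_\mathcal{X}(x^*)$-formulation of the corollary to the $(x^*,y^*)$-based formulation demanded by Theorem~\ref{thm:cc-m-isolated}; everything else is the routine compactness-plus-subsequence upgrade from a ball to a tube.
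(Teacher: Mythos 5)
Your proof is correct: all the hypotheses of Theorem~\ref{thm:cc-m-isolated} indeed transfer to any feasible pair $(x^*,y^*)$, since CC-CPLD and the M-stationarity conditions involve only $I_g(x^*)$, $I_0(x^*)$ (note $I_{\pm0}(x^*,y^*)=\supp(x^*)$ for every feasible $y^*$), and since the $d_x$-projection of $\cccc{Z}{x^*,y^*}$ lies in $\mathcal{C}_\mathcal{X}(x^*)$ by construction of $\mathcal{L}_{\mathcal X}(x^*)$ as a union over feasible $y^*$. The difference from the paper lies in how the tube $B_r(x^*)\times\R^n$ is handled. You argue by contradiction with a sequence of M-stationary points $(x^k,y^k)$, use $0\le y^k\le e$ to extract a convergent subsequence, and apply Theorem~\ref{thm:cc-m-isolated} once, at the accumulation point $(x^*,y^*)$. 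The paper instead applies the theorem at \emph{every} feasible pair $(x^*,\bar y)$, uses an open-cover argument on the compact set $\{(x^*,y)\mid (x^*,y)\in Z\}$ to obtain a uniform radius $r$, and then observes that for any M-stationary $(x,y)$ with $x\in B_r(x^*)$ the pair $(x^*,y)$ is itself feasible (because $x_i^*\neq0$ forces $x_i\neq0$ and hence $y_i=0$ for $r$ small), so that $(x,y)\in B_r(x^*,y)$ and the uniform radius applies. Your sequential route is slightly more economical — it avoids both the uniformization step and the explicit feasibility-transfer $(x^*,y)\in Z$, which is absorbed into the closedness of $Z$ at the limit — while the paper's covering argument yields as a by-product a single radius valid simultaneously around all feasible completions $(x^*,y)$, a mildly stronger intermediate statement. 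Both are complete proofs resting on the same reduction to Theorem~\ref{thm:cc-m-isolated}.
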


\begin{proof}
	For every $\bar y$, such that $(x^*,\bar y) \in Z$, the point $(x^*,\bar y)$ is M-stationary for \eqref{eq:reformulation}.
	Due to the definition of $d_x\in \mathcal{C}_\mathcal{X}(x^*)$ the prerequisites of Theorem \ref{thm:cc-m-isolated} are satisfied and thus there exists $r_{\bar y} > 0$ such that
	\begin{equation*}
		\forall (x,y)\in Z \cap B_{r_{\bar y}}(x^*,\bar y) :\ \left[(x,y) \text{ is M-stationary } \Rightarrow x=x^*\right].
	\end{equation*} 

	Together the balls $B_{r_{y}}(x^*,y)$ form an open covering of the compact set $\{(x^*,y) \mid (x^*,y) \in Z\}$ and thus we can find a $r > 0$ such that \begin{equation*}
		\forall (x,y)\in Z \cap B_{r}(x^*,\bar y) :\ \left[(x,y) \text{ is M-stationary } \Rightarrow x=x^*\right]
	\end{equation*} 
	for all $(x^*,\bar y) \in Z$.

	Now consider an arbitrary M-stationary point $(x,y)\in Z \cap (B_r(x^*) \times \R^n)$.
	By choosing $r > 0$ sufficiently small we can ensure the implication
	\[
		x_i^* \neq 0 \quad \Longrightarrow \quad x_i \neq 0 \quad \Longrightarrow \quad y_i = 0
	\]
	and thus $(x^*, y) \in Z$.
	This implies $(x,y) \in B_r(x^*,y)$ and thus $x = x^*$. \qed
\end{proof}

This result is later used to ensure the local convergence of a Scholtes-type regularization method.

\section{Convergence Properties of Scholtes Regularization}\label{sec:scholtes}

The Scholtes regularization for MPCCs \cite{S2001} has been successfully adapted to the relaxation of cardinality constrained and sparse optimization problems \eqref{eq:reformulation} in \cite{BBCS2017,FMPSW2013}.
As in the MPCC case, the adapted version is numerically very successful compared to other regularization approaches.
In this section we briefly introduce the Scholtes regularization for cardinality constrained optimization problems investigated already in \cite{BBCS2017}.
We also repeat convergence results for this regularization. 
Then we use the second order optimality conditions from Section \ref{sec:2nd-order} to expand the convergence theory. 
We show that the regularized programs have a solution in a neighbourhood of a strict local minimum $x^*$ of \eqref{eq:ccproblem}.
We then use that result to prove the convergence of KKT points $(x^k,y^k$) of the regularized programs to $x^*$.

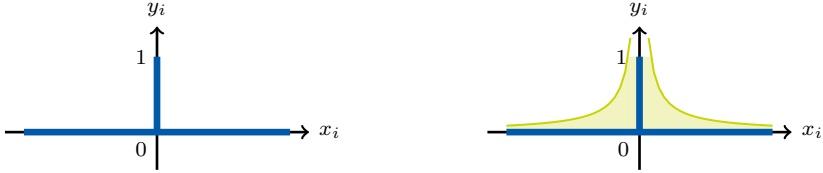
\begin{figure}[h]
	\centering
	\begin{minipage}{.4\textwidth}
		\begin{tikzpicture}[scale=1]
			\tikzstyle{every node}=[font=\footnotesize]
			\draw[->,line width = 1pt] (-2,0) -- (0,0) node[below left]{$0$} -- (2,0) node[right]{$x_i$};
			\draw[->,line width = 1pt] (0,-0.5) -- (0,1) node[left]{$1$} -- (0,1.4) node[above]{$y_i$};
			\draw[-, tud1b, line width = 2.5pt] (-1.75,0) -- (1.75,0);
			\draw[-, tud1b, line width = 2.5pt] (0,0) -- (0,1);
		\end{tikzpicture}
	\end{minipage}
	\quad
	\begin{minipage}{.4\textwidth}
		\begin{tikzpicture}[scale=1]
			\tikzstyle{every node}=[font=\footnotesize]
			\draw[->,line width = 1pt] (-2,0) -- (0,0) node[below left]{$0$} -- (2,0) node[right]{$x_i$};
			\draw[->,line width = 1pt] (0,-0.5) -- (0,1.4) node[above]{$y_i$};
			\fill[fill=tud5b, opacity = 0.25] (-1.75,0) -- plot [domain=-1.75:-0.15] (\x,{-0.15*\x^(-1)}) -- (-0.15,0) -- plot[domain=-0.15:0.15] (\x,1) -- (0.15,0) -- plot [domain=0.15:1.75] (\x,{0.15*\x^(-1)}) -- (1.75,0) -- cycle;
			\draw[-, tud5b, line width = 0.75pt] plot [domain=-1.75:-0.12] (\x,{-0.15*\x^(-1)});
			\draw[-, tud5b, line width = 0.75pt] plot [domain=0.12:1.75] (\x,{0.15*\x^(-1)});
			\draw[-, tud1b, line width = 2.5pt] (-1.75,0) -- (1.75,0);
			\draw[-, tud1b, line width = 2.5pt] (0,0) -- (0,1) node[left,black]{$1$};			
		\end{tikzpicture}
	\end{minipage}
	\caption{Orthogonality constraints (left) and the Scholtes-type regularization (right)}\label{fig:regularization-scholtes}
\end{figure}

To adapt the Scholtes regularization originally introduced for MPCCs in \cite{S2001} to \eqref{eq:reformulation}, the orthogonality constraint $x_i\cdot y_i=0$, $i=1,\dots,n$, is replaced by 
\begin{equation*}
	-t\leq x_i\cdot y_i\leq t\quad\forall i=1,\dots,n,
\end{equation*}
for a regularization parameter $t\geq 0$, see Figure \ref{fig:regularization-scholtes} for an illustration.
The resulting regularized programs are given by
\begin{equation}\label{eq:reg-nlp-scholtes}
	\begin{aligned}
		\text{NLP($t$)}:\quad\quad\min\nolimits_{(x,y)\in\R^n\times\R^n}\ f(x) \st & g(x)\leq0, \quad h(x) = 0, &\\
		& 0 \leq y \leq e, \quad e^Ty \geq n - \kappa, &\\
		& -t e \leq x \circ y\leq te.&
	\end{aligned}
\end{equation}
Let $Z(t)$ be the feasible set of NLP($t$) for $t\geq0$.
The idea of the regularization method is to compute KKT points of NLP($t$) for decreasing parameters $t\rightarrow0$ to obtain a feasible and stationary point of \eqref{eq:reformulation}.%
In \cite{BBCS2017} it was shown that the limit of such a sequence is S-stationary under CC-MFCQ.
We repeat the precise result here for completeness sake.

\begin{theorem}[{\cite[Theorem 3.1]{BBCS2017}}]\label{thm:reg-scholtes-convergence}
   Let $(t^k)_{k} \downarrow 0$ and $(x^k,y^k)_{k}$ be a sequence of KKT points of \textnormal{NLP(}$t^k$\textnormal{)} with $x^k \to x^*$.
   If CC-MFCQ holds at $x^*$, then for every accumulation point $y^*$ of  the bounded sequence $(y^k)_k$  the pair $(x^*,y^*)$ is an S-stationary point of \eqref{eq:reformulation}.
\end{theorem}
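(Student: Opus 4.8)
The plan is to unfold the KKT system of NLP($t^k$), regroup its multipliers into the shape of an M-stationary system for \eqref{eq:reformulation}, establish boundedness of the $x$-multipliers from CC-MFCQ, pass to the limit, and finally check the extra ``strong'' requirement $\gamma_i^\ast=0$ on $I_{00}(x^\ast,y^\ast)$. First I would record that $(x^\ast,y^\ast)\in Z$: since $|x_i^ky_i^k|\le t^k\to 0$ we get $x_i^\ast y_i^\ast=0$, and the remaining constraints pass to the limit by continuity (and $(y^k)_k\subseteq[0,e]$ is trivially bounded, so accumulation points exist). Next I would write the KKT conditions of \eqref{eq:reg-nlp-scholtes} at $(x^k,y^k)$ with multipliers $\lambda^k\ge 0$ for $g\le 0$, $\mu^k$ for $h=0$, $a^k,b^k\ge 0$ for $0\le y\le e$, $c^k\ge 0$ for $e^Ty\ge n-\kappa$, and $\eta^{k,+},\eta^{k,-}\ge 0$ for $-t^ke\le x\circ y\le t^ke$. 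Putting $\delta_i^k:=\eta_i^{k,+}-\eta_i^{k,-}$ and $\gamma_i^k:=\delta_i^k y_i^k$, the stationarity in $x$ takes exactly the M-stationary form
\begin{equation*}
	\nabla f(x^k)+\sum_{i=1}^m\lambda_i^k\nabla g_i(x^k)+\sum_{i=1}^p\mu_i^k\nabla h_i(x^k)+\sum_{i=1}^n\gamma_i^k e_i=0,
\end{equation*}
while the stationarity in $y$ reads componentwise $-a_i^k+b_i^k-c^k+\delta_i^k x_i^k=0$. Since $t^k>0$, for each $i$ at most one of $\eta_i^{k,+},\eta_i^{k,-}$ is nonzero, and $\delta_i^k\neq 0$ forces $x_i^ky_i^k=\pm t^k$, hence $x_i^k\neq 0$ and $y_i^k\neq 0$; in particular $\gamma_i^k=0$ whenever $x_i^k=0$ or $y_i^k=0$.

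The main structural step is boundedness of $(\lambda^k,\mu^k,\gamma^k)_k$. By Remark~\ref{rem:cc-mfcq-neighborhood}, CC-MFCQ holds at $x^k$ for $k$ large. Assuming $\|(\lambda^k,\mu^k,\gamma^k)\|\to\infty$, I would normalise, extract a nonzero limit $(\tilde\lambda,\tilde\mu,\tilde\gamma)$, and show $\tilde\lambda\ge 0$ with $\supp(\tilde\lambda)\subseteq I_g(x^\ast)$ (from $\lambda_i^kg_i(x^k)=0$) and $\supp(\tilde\gamma)\subseteq I_0(x^\ast)$; for the latter one uses that $x_i^\ast\neq 0$ implies $x_i^k$ is bounded away from $0$, hence $y_i^k=t^k/|x_i^k|\to 0$ whenever $\delta_i^k\neq 0$, and then the $y_i$-stationarity (with the box constraint on $y_i$ inactive) makes $\gamma_i^k$ negligible relative to $\|(\lambda^k,\mu^k,\gamma^k)\|$. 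Dividing the displayed $x$-stationarity by $\|(\lambda^k,\mu^k,\gamma^k)\|$ and letting $k\to\infty$ then contradicts the positive linear independence in CC-MFCQ. With boundedness secured, I pass to a convergent subsequence $\lambda^k\to\lambda^\ast\ge 0$, $\mu^k\to\mu^\ast$, $\gamma^k\to\gamma^\ast$; continuity of the first derivatives yields the M-stationarity equation at $(x^\ast,y^\ast)$ with $\lambda_i^\ast g_i(x^\ast)=0$, and the same estimate gives $\gamma_i^\ast=0$ for $i\in I_{\pm0}(x^\ast,y^\ast)$, so $(x^\ast,y^\ast)$ is M-stationary.

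The remaining — and hardest — point is $\gamma_i^\ast=0$ for $i\in I_{00}(x^\ast,y^\ast)$, which upgrades M-stationarity to S-stationarity in the sense of Definition~\ref{def:ccstationary}(b). Fix such an $i$, so $x_i^\ast=y_i^\ast=0$. If $y_i^k=0$ or $\delta_i^k=0$ for all large $k$ then $\gamma_i^k=0$; otherwise, for $k$ large, $0<y_i^k<1$ (as $y_i^k\to 0$) and $x_i^ky_i^k=\pm t^k$, so the box constraint on $y_i$ is inactive and the $y_i$-stationarity collapses to $\delta_i^kx_i^k=c^k$, which gives $\gamma_i^k=\delta_i^ky_i^k=c^k\,x_i^ky_i^k/(x_i^k)^2=\pm c^kt^k/(x_i^k)^2$. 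One then has to show this vanishes, combining $|x_i^k|\,|y_i^k|=t^k$ with $y_i^k\to 0$ and a uniform bound on the multiplier $c^k$; securing that bound — by a closer look at the $y$-multipliers of \eqref{eq:reg-nlp-scholtes} near $(x^\ast,y^\ast)$, again via CC-MFCQ — is the step I expect to be the genuine obstacle, and it is precisely where the cardinality structure (the two-sided relaxation and the absence of a sign constraint on $x$) must be exploited, since at this point the corresponding MPCC-Scholtes regularization only delivers C-stationarity. Once $\gamma_i^\ast=0$ is established on $I_{00}(x^\ast,y^\ast)$, all conditions of Definition~\ref{def:ccstationary}(b) are met and $(x^\ast,y^\ast)$ is S-stationary, completing the proof.
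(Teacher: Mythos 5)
You should first note that the paper itself does not prove this theorem; it is quoted verbatim from \cite[Theorem 3.1]{BBCS2017}, so your attempt can only be measured against the standard argument of that reference. Your overall route is the right one: feasibility of the limit, rewriting the KKT system of \eqref{eq:reg-nlp-scholtes} with $\delta_i^k=\eta_i^{k,+}-\eta_i^{k,-}$ and $\gamma_i^k=\delta_i^k y_i^k$, boundedness of $(\lambda^k,\mu^k,\gamma^k)_k$ via CC-MFCQ, and passage to the limit to get M-stationarity. But the decisive step, $\gamma_i^*=0$ for $i\in I_{00}(x^*,y^*)$, is exactly the one you leave open, and it is the substance of the theorem — it is what separates this result from the MPCC--Scholtes situation, where only C-stationarity survives. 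Moreover, the repair you sketch would not work as stated: a \emph{uniform} bound on $c^k$ is not enough, because $|\gamma_i^k|=c^k t^k/(x_i^k)^2=c^k (y_i^k)^2/t^k$ and $(y_i^k)^2/t^k$ may diverge even though $y_i^k\to 0$; what is needed is $c^k=O(t^k)$. Also, CC-MFCQ cannot deliver any bound on the $y$-multipliers $(a^k,b^k,c^k)$ directly, since it only involves gradients with respect to $x$. The same soft spot appears earlier in your boundedness argument, where the claim that $\gamma_i^k$ is ``negligible'' relative to $\|(\lambda^k,\mu^k,\gamma^k)\|$ for $i\notin I_0(x^*)$ is asserted but not derived; it too requires control of $c^k$.

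The missing idea is a comparison index with $y_j^*>0$. Suppose $i\in I_{00}(x^*,y^*)$ and $\gamma_i^k\not\to 0$ along a subsequence. Then for large $k$ one has $y_i^k\in(0,1)$, $x_i^k\neq 0$, $|x_i^ky_i^k|=t^k$ and $\delta_i^kx_i^k=c^k$; if $c^k=0$ then $\delta_i^k=0$ and $\gamma_i^k=0$, so $c^k>0$ and hence $e^Ty^k=n-\kappa$. Since $e^Ty^*\geq n-\kappa\geq 1$, the set $J:=I_{0+}(x^*,y^*)\cup I_{01}(x^*,y^*)$ is nonempty, and $|J|\geq n-\kappa$; if every $j\in J$ had $y_j^k=1$, then $e^Ty^k=n-\kappa$ would force $y_l^k=0$ for all $l\notin J$, contradicting $y_i^k>0$. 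Hence, after passing to a subsequence, there is a fixed $j\in J$ with $0<y_j^k<1$ for all large $k$, so $\delta_j^kx_j^k=c^k>0$, $\delta_j^k\neq0$, $|x_j^ky_j^k|=t^k$, and therefore $c^k=|\gamma_j^k|\,|x_j^k|/y_j^k=|\gamma_j^k|\,t^k/(y_j^k)^2\leq C\,t^k$ for large $k$, using the already established boundedness of $\gamma_j^k$ and $y_j^k\to y_j^*>0$. Consequently $|\gamma_i^k|=c^k(y_i^k)^2/t^k\leq C'(y_i^k)^2\to 0$, the desired contradiction. The same device (now with $c^k/\|(\lambda^k,\mu^k,\gamma^k)\|\leq |x_j^k|/y_j^k\to 0$, or the degenerate case $y_l^k=0$ off $J$) closes the gap in your boundedness step for $i\notin I_0(x^*)$. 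Without these arguments your proof establishes at most M-stationarity of the limit, not the claimed S-stationarity.
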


A necessary follow up question is whether the regularized programs NLP($t$) possess KKT points.
In \cite{BBCS2017} it was shown that the regularized problems satisfy standard MFCQ if the original problem \eqref{eq:reformulation} satisfies CC-MFCQ.
We state a slightly modified version here, whose proof coincides exactly with the one of \cite[Theorem 3.2]{BBCS2017}.

\begin{theorem}[{\cite[Theorem 3.2]{BBCS2017}}]\label{thm:reg-scholtes-mfcq}
   Let $(x^*,y^*)$ be feasible for \eqref{eq:reformulation} and CC-MFCQ hold there.
   Then there exists a radius $r > 0$ and  a $T > 0$ such that for all $t \in (0,T]$ standard MFCQ for NLP($t$) holds at every $(x,y)\in Z(t)$ with $x \in B_r(x^*)$.
\end{theorem}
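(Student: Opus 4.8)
The plan is to verify the two ingredients of standard MFCQ for NLP($t$) at an arbitrary $(x,y)\in Z(t)$ with $x\in B_r(x^*)$ directly: linear independence of the gradients of the active equality constraints, and existence of a vector $(d_x,d_y)$ in their common kernel along which every active inequality constraint strictly decreases. The equality constraints of NLP($t$) are just $h_i(x)=0$, with gradients $(\nabla h_i(x),0)^T$, and CC-MFCQ at $x^*$ in particular forces $\nabla h_1(x^*),\dots,\nabla h_p(x^*)$ to be linearly independent; by continuity of $\nabla h$ this persists on some $B_{r_1}(x^*)$, so the first ingredient holds once $r\le r_1$. I would also record the following facts, valid for $t>0$ and $x\in B_r(x^*)$ with $r$ small: for each index at most one of $x_iy_i=t$, $x_iy_i=-t$ is active; on any active such index $|x_i|\ge t$ because $|y_i|\le 1$; for $i\notin I_0(x^*)$ one has $|x_i|\ge\delta$ for a fixed $\delta>0$, so there $|y_i|\le t/\delta$ is small when $t$ is small, whereas $|x_i|<r$ for $i\in I_0(x^*)$; and $I_g(x)\subseteq I_g(x^*)$, $I_0(x)\subseteq I_0(x^*)$.

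For the feasible direction I would construct $d_x$ and $d_y$ in turn. CC-MFCQ at $x^*$ equivalently yields (by a standard transposition argument) a vector $\hat d$ with $\nabla g_i(x^*)^T\hat d<0$ for $i\in I_g(x^*)$, $\nabla h_i(x^*)^T\hat d=0$ for all $i$ and $\hat d_i=0$ for $i\in I_0(x^*)$, together with full row rank of the matrix $A(x^*)$ whose rows are $\nabla h_i(x^*)^T$ ($i=1,\dots,p$) and $e_i^T$ ($i\in I_0(x^*)$). For $x$ near $x^*$ the analogous matrix $A(x)$ still has full row rank, and the projection of $\hat d$ onto $\ker A(x)$ still satisfies $\nabla g_i(x)^T(\,\cdot\,)<0$ for $i\in I_g(x^*)$. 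I would therefore first solve $A(x)d_x=b$, where the $h$-block of $b$ vanishes and the $I_0(x^*)$-block prescribes $(d_x)_i$ to have sign $-\sgn(x_i)$ and large enough magnitude on the orthogonality-active indices $i\in I_0(x^*)$, and $(d_x)_i=0$ for the remaining $i\in I_0(x^*)$; and then add a large positive multiple of the projection of $\hat d$ onto $\ker A(x)$ to force $\nabla g_i(x)^Td_x<0$ for $i\in I_g(x)$. Since every element of $\ker A(x)$ vanishes on $I_0(x^*)$, this last step disturbs neither the prescribed components nor the relations $\nabla h_i(x)^Td_x=0$.

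For $d_y$ I would impose the sign forced by the active box constraints -- $(d_y)_i>0$ where $y_i=0$, $(d_y)_i<0$ where $y_i=1$ -- take $(d_y)_i$ of sign opposite to $\sgn(x_i)$ and large enough on the orthogonality-active indices $i\notin I_0(x^*)$ that $x_i(d_y)_i$ dominates the small cross term $y_i(d_x)_i$, and, if $e^Ty=n-\kappa$ holds, set $(d_y)_{i_0}$ large and positive on one further index $i_0$ with $y_{i_0}<1$ not of the previous two types so that $\sum_i(d_y)_i>0$; when $\kappa\ge1$ and $r,T$ are small such an $i_0$ is available. One then checks that every active inequality of NLP($t$) strictly decreases along $(d_x,d_y)$: the box and cardinality constraints by construction; the active orthogonality constraints at $i\notin I_0(x^*)$ because there $x_i(d_y)_i$ dominates $y_i(d_x)_i$; and the active orthogonality constraints at $i\in I_0(x^*)$ because $(d_x)_i$ was prescribed with sign $-\sgn(x_i)$, so $y_i(d_x)_i$ has the favourable sign and, for $r$ small, dominates $x_i(d_y)_i$. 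Fixing $r$ and $T$ small enough that all these estimates hold uniformly for $t\in(0,T]$ and $x\in B_r(x^*)$ then completes the proof.

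The main obstacle is precisely this coordination, together with the bookkeeping of the estimates: $d_x$ is not free componentwise (it must lie in $\ker\{\nabla h_i(x)\}$ and strictly decrease the active $g_i$), yet it is the only lever for the active orthogonality constraints at indices of $I_0(x^*)$, while $d_y$ must simultaneously respect the box signs and, via an active cardinality bound, a global positivity condition. What makes the reconciliation possible is CC-MFCQ itself, which supplies both the full row rank of $A(x)$ (room to prescribe $d_x$ on $I_0(x^*)$) and a strictly feasible $\hat d\in\ker A(x)$ (room to then dominate the $g_i$); the chained ``dominate the cross term'' and ``keep the cardinality sum positive'' requirements then close up because the multiplicative gains they produce can be pushed below one by shrinking $r$ and $T$. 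A little extra care is needed in degenerate sub-cases, e.g. when the cardinality constraint is active and almost every index is orthogonality-active, or when $\ker A(x)=\{0\}$, which under CC-MFCQ forces $I_g(x)=\emptyset$ and hence leaves no $g$-constraint to worry about.
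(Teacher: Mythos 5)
Your overall strategy (primal construction of an MFCQ direction for NLP($t$), using the equivalent ``full row rank plus strictly feasible kernel direction'' form of CC-MFCQ) is workable, and your treatment of the equality block, of $d_x$ on $I_0(x^*)$ via $A(x)d_x=b$, and of the $g$-descent via a large multiple of the projection of $\hat d$ onto $\ker A(x)$ is sound. The paper itself does not reprove the statement but refers to \cite[Theorem 3.2]{BBCS2017}, where MFCQ is verified in its dual form (a sign-constrained vanishing combination of active gradients of NLP($t$) is reduced, through the $y$-components, to a positive linear dependence of $\nabla g_i(x)$, $\nabla h_i(x)$, $e_i$ $(i\in I_0(x^*))$, contradicting CC-MFCQ near $x^*$), which sidesteps the bookkeeping that causes the problems below. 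Two steps of your construction are genuinely flawed. First, a sign error: at a regularization-active index $i\notin I_0(x^*)$ with the \emph{lower} bound active, i.e. $x_iy_i=-t$ (so $x_i<0<y_i$), MFCQ requires $-y_i(d_x)_i-x_i(d_y)_i<0$. Your rule ``$(d_y)_i$ of sign opposite to $\sgn(x_i)$, large'' gives $(d_y)_i>0$, hence $-x_i(d_y)_i=|x_i|(d_y)_i$ is the large \emph{positive} term and you certify the wrong strict inequality. The correct choice is $(d_y)_i<0$ (decrease $y_i$) on every regularization-active index outside $I_0(x^*)$, irrespective of $\sgn(x_i)$; easy to fix, but as written the verification fails whenever $x_i^*<0$ and the branch $x_iy_i=-t$ is active.

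Second, the treatment of the active cardinality constraint is not justified. The index $i_0$ with $y_{i_0}<1$ and free of box/regularization activity need not exist: e.g. with $\|x^*\|_0=\kappa$ one can have $(x,y)\in Z(t)$ with $x$ arbitrarily close to $x^*$ in which every $i\in I_0(x^*)$ has $y_i\in(0,1)$ and $x_iy_i=t$, every $i\notin I_0(x^*)$ has $y_i=t/|x_i|$ and $x_iy_i=\pm t$, and $e^Ty=n-\kappa$; then \emph{every} index with $y_i<1$ is regularization-active. In such configurations the positive mass for $e^Td_y>0$ must come from regularization-active indices in $I_0(x^*)$, where a positive $(d_y)_i$ is admissible only if $y_i(d_x)_i$ outweighs $x_i(d_y)_i$; but you fixed $d_x$ before $d_y$, and your justification that ``for $r$ small $y_i(d_x)_i$ dominates $x_i(d_y)_i$'' is false in general (take $x_i=y_i=\sqrt t$: the ratio is $|(d_x)_i|/|(d_y)_i|$, independent of $r$). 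Closing this loop requires choosing the prescribed magnitudes of $(d_x)_i$ on $I_0(x^*)$ jointly with the $d_y$-budget and showing the resulting loop gain (through $d_x^{(0)}$, $M$, the off-$I_0$ components of $d_x$, and back) is less than one for $r,T$ small — for instance by working at an index $i\in I_0(x^*)$ carrying at least its average share $y_i\gtrsim (n-\kappa)/n$ of the cardinality sum, where the allowance is of order $(d_x)_i/t$. Your closing remark that ``the gains can be pushed below one by shrinking $r$ and $T$'' points at this but does not carry it out, and it is precisely the delicate step of the argument; as stated, the recipe does not produce an MFCQ direction in the configurations above.
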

Thus in case the regularized problems have local solutions close to $(x^*,y^*)$, these local solutions are KKT points.
Next, we investigate whether the regularized programs NLP($t$) posses a local solution in the vicinity of a local solution $x^*$ of \eqref{eq:ccproblem}.

\begin{theorem}\label{thm:reg-scholtes-solutions}
	\begin{enumerate}[label=(\alph*)]
		\item\label{thm:reg-scholtes-solutions:part-min-ccproblem} Let $x^*$ be a strict local minimum  of \eqref{eq:ccproblem}.
		Then there exist $r>0$ and $T>0$ such that for all $t \in (0,T]$ the regularized program NLP($t$) has a local minimum $(x^t,y^t)$ with $x^t \in B_r(x^*)$.

		\item\label{thm:reg-scholtes-solutions:part-min-reformulation} Let $(x^*,y^*)$ be a strict local minimum of \eqref{eq:reformulation} with respect to $x$ and $\|x^*\|_0 = \kappa$.
		Then there exist $r>0$ and $T>0$ such that for all $t \in (0,T]$ the regularized program NLP($t$) has a local minimum $(x^t,y^t)$ with $x^t \in B_r(x^*)$.
	\end{enumerate}
\end{theorem}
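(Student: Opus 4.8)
The plan is to treat both parts by the same device: for each regularisation parameter $t>0$ I introduce an auxiliary restricted problem, obtained by intersecting $Z(t)$ with a fixed closed ball around $x^*$ (resp.\ around $(x^*,y^*)$), solve it by a compactness argument, and then show that for all sufficiently small $t$ the resulting minimiser lies in the interior of that ball, which makes it a local minimum of NLP($t$). The compactness and limiting steps are routine; the only genuine subtlety is the role of the assumption $\|x^*\|_0=\kappa$ in part~\ref{thm:reg-scholtes-solutions:part-min-reformulation}.

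For part~\ref{thm:reg-scholtes-solutions:part-min-ccproblem}, use strict local minimality to pick $\rho>0$ so small that $f(x^*)<f(x)$ for every $x\neq x^*$ which is feasible for \eqref{eq:ccproblem} and lies in $\overline{B_\rho(x^*)}$. For $t>0$ consider $\min\{f(x):(x,y)\in Z(t),\ x\in\overline{B_\rho(x^*)}\}$. Since $0\le y\le e$ always holds and $g,h$ are continuous, the feasible set of this restricted problem is compact, and it is nonempty because $Z\subseteq Z(t)$, so that $(x^*,y^*)\in Z(t)$ for any $y^*$ with $(x^*,y^*)\in Z$ (such a $y^*$ exists since $\|x^*\|_0\le\kappa$). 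Hence the restricted problem has a global minimiser $(x^t,y^t)$, in particular $f(x^t)\le f(x^*)$. I would then argue by contradiction that $x^t$ lies in the open ball $B_\rho(x^*)$ for all small $t$: otherwise there is a sequence $t^k\downarrow 0$ with minimisers $(x^k,y^k)$, $x^k\in\partial B_\rho(x^*)$; by compactness pass to a convergent subsequence $(x^k,y^k)\to(\bar x,\bar y)$, so that $\bar x\in\partial B_\rho(x^*)$, hence $\bar x\neq x^*$. Letting $k\to\infty$ in $-t^ke\le x^k\circ y^k\le t^ke$ gives $\bar x\circ\bar y=0$, and with the remaining (closed) constraints $(\bar x,\bar y)\in Z$, i.e.\ $\bar x$ is feasible for \eqref{eq:ccproblem}; passing to the limit in $f(x^k)\le f(x^*)$ gives $f(\bar x)\le f(x^*)$, contradicting the choice of $\rho$. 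Thus there is $T>0$ with $x^t\in B_\rho(x^*)$ for all $t\in(0,T]$, and then $(x^t,y^t)$ is a local minimum of NLP($t$): any $(x,y)\in Z(t)$ close to it has $x\in B_\rho(x^*)$, hence is admissible in the restricted problem and satisfies $f(x)\ge f(x^t)$. Taking $r:=\rho$ proves part~\ref{thm:reg-scholtes-solutions:part-min-ccproblem}.

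For part~\ref{thm:reg-scholtes-solutions:part-min-reformulation} the argument is the same, now with a closed ball $\overline{B_\rho(x^*,y^*)}$ in the product space, where $\rho$ is chosen so that $f(x^*)<f(x)$ for all $(x,y)\in\overline{B_\rho(x^*,y^*)}\cap Z$ with $x\neq x^*$, and the restricted problem is $\min\{f(x):(x,y)\in Z(t)\cap\overline{B_\rho(x^*,y^*)}\}$. The only new issue arises in the contradiction step: the limit $(\bar x,\bar y)\in\partial B_\rho(x^*,y^*)\cap Z$ need not satisfy $\bar x\neq x^*$ a priori, so strict minimality with respect to $x$ cannot be invoked directly, and excluding the case $\bar x=x^*$ is where I expect the real work to lie. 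This is precisely where $\|x^*\|_0=\kappa$ enters: if $\bar x=x^*$, then $(x^*,\bar y)\in Z$, but $\|x^*\|_0=\kappa$ together with $x^*\circ y=0$, $0\le y\le e$ and $e^Ty\ge n-\kappa$ forces $y_i=0$ on $\supp(x^*)$ (which has $\kappa$ indices) and $y_i=1$ on its complement (which has $n-\kappa$ indices), so that $y$ is uniquely determined by $x^*$; hence $\bar y=y^*$, contradicting $(\bar x,\bar y)\in\partial B_\rho(x^*,y^*)$. Therefore $\bar x\neq x^*$, and now strict local minimality with respect to $x$ gives $f(\bar x)>f(x^*)$, contradicting $f(\bar x)\le f(x^*)$. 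The remainder is verbatim as in part~\ref{thm:reg-scholtes-solutions:part-min-ccproblem}.
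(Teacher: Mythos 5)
Your proposal is correct. Part \ref{thm:reg-scholtes-solutions:part-min-ccproblem} is essentially the paper's own argument: minimise $f$ over the nonempty compact set obtained by intersecting $Z(t)$ with a closed ball in the $x$-variable, observe $f(x^t)\le f(x^*)$ because $Z\subseteq Z(t)$, rule out boundary minimisers for small $t$ by passing to the limit $t^k\downarrow 0$ (the limit $\bar x\in\partial B_\rho(x^*)$ is feasible for \eqref{eq:ccproblem} with $f(\bar x)\le f(x^*)$, contradicting strict minimality), and conclude that an interior minimiser of the restricted problem is a local minimum of NLP($t$).

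For part \ref{thm:reg-scholtes-solutions:part-min-reformulation} you take a genuinely different route. The paper reduces (b) to (a): since $\|x^*\|_0=\kappa$, any sequence $x^k\to x^*$ feasible for \eqref{eq:ccproblem} with $x^k\neq x^*$ eventually has $\supp(x^k)=\supp(x^*)$, so $(x^k,y^*)$ is feasible for \eqref{eq:reformulation} and strict minimality with respect to $x$ yields $f(x^k)>f(x^*)$; hence $x^*$ is a strict local minimum of \eqref{eq:ccproblem} and part (a) applies. You instead rerun the compactness argument with a ball in the product space and invoke $\|x^*\|_0=\kappa$ at the limit point: if $\bar x=x^*$, then $(x^*,\bar y)\in Z$ together with $x^*\circ\bar y=0$, $0\le\bar y\le e$ and $e^T\bar y\ge n-\kappa$ forces $\bar y_i=0$ on $\supp(x^*)$ and $\bar y_i=1$ elsewhere, so $\bar y=y^*$, which is incompatible with $(\bar x,\bar y)\in\partial B_\rho(x^*,y^*)$; hence $\bar x\neq x^*$ and strict minimality with respect to $x$ gives the contradiction. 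Both exploit the active cardinality constraint correctly, just at different places: the paper's reduction is shorter and isolates the useful intermediate fact that such a point $x^*$ is a strict local minimum of the original problem (the same fact quoted from the literature after Theorem \ref{thm:reg-scholtes-x-unique}), whereas your direct product-space argument stays entirely within the reformulation and makes explicit that $\|x^*\|_0=\kappa$ pins down the auxiliary variable uniquely --- exactly the mechanism whose failure for $\|x^*\|_0<\kappa$ is illustrated in Example \ref{ex:card-strict-necessary}. The only cosmetic point is that passing from $(x^t,y^t)\in B_\rho(x^*,y^*)$ to the asserted $x^t\in B_r(x^*)$ requires the (harmless) compatibility of the product-space norm with the norm on the $x$-component.
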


\begin{proof}
	(a)\quad  By assumption there exists a radius $r> 0$ such that for all $x \in \overline{B_r(x^*)} \setminus \{x^*\}$ feasible for \eqref{eq:ccproblem} we have $f(x) > f(x^*)$.

	Now assume that there is no $T > 0$ such that NLP($t$) has a local minimum in $Z(t) \cap (B_r(x^*) \times \R^n)$ for all $t \in (0,T]$.
	Then we can find a sequence $t_k \downarrow 0$ such that NLP($t_k$) has no local minimum on $Z(t_k) \cap (B_r(x^*) \times \R^n)$.
	Since the set $Z(t_k) \cap (\overline{B_r(x^*)} \times \R^n)$ is nonempty and compact (recall that the $y$-variables are always bounded), $f$ attains a global minimum $(x^k,y^k)$ there.
	Consequently $x^k \in \partial B_r(x^*)$ and $f(x^k) < f(x^*)$.
	If we had $f(x^k)\geq f(x^*)$, then the point $(x^*,y^*)$, where $y^*_i=0$, for all $i\in\supp(x^*)$ and $y_i^*=1$ for all $i\in I_0(x^*)$, would be a local minimum of $f$ on $Z(t_k) \cap (B_r(x^*) \times \R^n)$.

	Since $\partial B_r(x^*)$ is compact, we may assume without loss of generality that $(x^k)_k$ converges to some limit $\bar x\in\partial B_r(x^*)$, which implies $\bar x \neq x^*$.
	And since $y^k$ is bounded, it is also convergent (at least on a subsequence) to some limit $\bar y$.
	Letting $t_k \downarrow 0$, we obtain $(\bar x,\bar y) \in Z$. 
	Hence $\bar x$ is feasible for \eqref{eq:ccproblem}.
	Due to $\bar x \neq x^*$ and the choice of $r$, this yields the contradiction
	\[
		f(x^*) \geq \lim_{k \to \infty} f(x^k) = f(\bar x) > f(x^*).
	\]

	\smallskip
	(b)\quad We only have to show that the assumptions imply that $x^*$ is a strict local minimum of \eqref{eq:ccproblem}.
	To this end consider an arbitrary sequence $x^k \to x^*$ feasible for \eqref{eq:ccproblem} with $x^k \neq x^*$.
	Because $x^k$ is feasible for \eqref{eq:ccproblem}, the active cardinality constraint $\|x^*\|_0 = \kappa$ implies that $(x^k,y^*)$ is feasible for \eqref{eq:reformulation} for all $k$ sufficiently large.
	Consequently we have $f(x^k) > f(x^*)$, due to $x^k \neq x^*$.
	By part \ref{thm:reg-scholtes-solutions:part-min-ccproblem} there exist $r>0$, $T>0$ such that for all $t\in(0,T]$ NLP($t$) has a local minimum $(x^t,y^t)$ with $x^t \in B_r(x^*)$. \qed
\end{proof}

If $(x^*,y^*)$ is a strict local minimum of the reformulation \eqref{eq:reformulation} with respect to $x$ but the cardinality constraint is not active, then Theorem \ref{thm:reg-scholtes-solutions} does not guarantee the existence of solutions of NLP($t$) in a neighbourhood unless $x^*$ is a strict local minimum of the original problem \eqref{eq:ccproblem}.
This is in fact an advantage because local minima of the reformulation \eqref{eq:reformulation}, in which the cardinality constraint is not active, are not necessarily local minima of the original problem \eqref{eq:ccproblem} and thus not points we want the regularization method to converge to.
Precisely this situation is illustrated in the following example.

\begin{example}\label{ex:card-strict-necessary}
	Consider the cardinality constrained optimization problem
	\begin{equation*}\label{ex:card-strict-necessary:ccop}
			\min\limits_{x\in\R^3} f(x)=\vn{x-(0,1,2)^T}^2 \st \vn{x}_0\leq1.
	\end{equation*}
	Then $x^1=(0,0,2)^T$ is the global minimum, $x^2=(0,1,0)^T$ is a local minimum, but $x^*=(0,0,0)^T$ is no local minimum.
	Now consider the continuous reformulation, which is is given by
	\begin{equation*}\label{ex:card-strict-necessary:relaxation}
		\min\limits_{x\in\R^2} f(x)=\vn{x-(0,1,2)^T}^2 \st 0 \leq y \leq e, \quad  e^Ty \geq 2,  \quad x \circ y = 0.
	\end{equation*}
	Then choosing $y^*=(1,1,1)^T$ the point $(x^*,y^*)$ is a strict local minimum of the continuous reformulation with respect to $x$ since for all $r \in (0,1)$ all points $(x,y) \in B_r(x^*,y^*) \cap Z$ satisfy $x = x^*$.
	The regularized program for a parameter $t>0$ is given by
	\begin{equation*}\label{ex:card-strict-necessary:scholtes-regularization}
			\min\limits_{x\in\R^2} f(x)=\vn{x-(0,1,2)^T}^2 \st 0 \leq y \leq e, \quad  e^Ty \geq 2,  \quad -te \leq x \circ y \leq te.
	\end{equation*}
	For all $(x,y)\in Z(t)$ sufficiently close to $(x^*,y^*)$ we have $y_i>0$ and $e^Ty>2$.
	Hence in a neighbourhood of $(x^*,y^*)$ the KKT-conditions of the regularized program in $(x,y)$ imply
	\begin{align*}
		0 & = 2(x_2-1)+\gamma_2y_2 \quad \Longrightarrow \quad \gamma_2y_2 \approx 2,\\
		0 & = 2(x_3-2)+\gamma_3y_3\quad \Longrightarrow \quad \gamma_3y_3 \approx 4,\\
		0 & = \nu+\gamma \circ x , \quad \nu \geq 0,  \quad \gamma \circ x \geq 0
	\end{align*}
	Here, the last equation implies $\nu = 0$ and $\gamma \circ x = 0$, which is only possible if $\gamma = 0$.
	This, however, is a contradiction to the first two equations.
	Thus the KKT-conditions cannot be satisfied in a neighbourhood of $(x^*,y^*)$. 
	Since CC-LICQ holds in $(x^*,y^*)$, it follows from Theorem \ref{thm:reg-scholtes-mfcq} that MFCQ holds for the regularized problem  sufficiently close to $(x^*,y^*)$.
	Consequently the regularized program cannot have local minima in a vicinity of $(x^*,y^*)$.
	
	This implies that the Scholtes-type regularization cannot converge to the undesirable local solution $(x^*,y^*)$ of the continuous reformulation , which does not correspond to a solution of the original problem.
\end{example}

Combining all of our previous results, we are now able to prove the main result of this section:
Whenever $x^*$ is a strict local minimum of \eqref{eq:ccproblem} satisfying CC-MFCQ, then the Scholtes relaxation method is locally well defined and the KKT points $(x^k,y^k)$ converge to $x^*$ at least in the $x$-component.
If additionally $\|x^*\|_0 = \kappa$ holds, then the $y$-component is also convergent.

\begin{theorem}\label{thm:reg-scholtes-x-unique}
	\begin{enumerate}[label=(\alph*)]
		\item Let $x^*$ be a strict local minimizer of \eqref{eq:ccproblem} (or $(x^*,y^*)$ be a strict local minimum of \eqref{eq:reformulation} with respect to $x$ and $\|x^*\|_0 = \kappa$) such that CC-MFCQ holds in $x^*$.
		Then there exist $T>0$ and $r > 0$ such that for all $t \in (0,T]$ NLP($t$) has a local minimum/KKT point $(x^t,y^t)$ with $x^t \in B_r(x^*)$.

		\item Let $(x^*,y^*) \in Z$ satisfy CC-MFCQ and choose $r > 0$ sufficiently small.
		Consider a sequence $(t_k)_k \downarrow 0$ and KKT points $(x^k,y^k)_k$ of NLP($t_k$) such that $x^k \in B_r(x^*)$ for all $k \in \N$.
		Then the sequence $(x^k,y^k)_k$ has accumulation points and every accumulation point $(\bar x, \bar y)$ is an S-stationary point of \eqref{eq:reformulation}.

		\item\label{thm:reg-scholtes-x-unique:part-sosc-xy} Let $f,g,h$ be twice continuously differentiable.
		Let $(x^*,y^*)$ be a strict local minimum of \eqref{eq:reformulation} with respect to $x$ and $\|x^*\|_0 = \kappa$ such that CC-MFCQ holds and
		\begin{equation*}
			d_x^T\left(\nabla^2f(x^*)+\sum\limits_{i=1}^m\lambda_i\nabla^2g_i(x^*)+\sum\limits_{i=1}^p\mu_i\nabla^2h_i(x^*)\right)d_x>0
		\end{equation*}
		hold for all $(d_x,d_y)\in\cccc{Z}{x^*,y^*}$ with $d_x\not=0$ and all S-stationary multipliers $(\lambda,\mu,\gamma)$ of $(x^*,y^*)$. 
		Then there exists $r >0$ such that for all sequences $(t_k)_k \downarrow 0$ for all $k$ sufficiently large NLP($t_k$) has a KKT point $(x^k,y^k)$ with $x^k \in B_r(x^*)$ and $(x^k,y^k) \to (x^*,y^*)$.

		\item\label{thm:reg-scholtes-x-unique:part-sosc-x} Let $f,g,h$ be twice continuously differentiable.
		Let $x^*$ be a  strict local minimum of \eqref{eq:ccproblem} such that CC-MFCQ holds and
		\begin{equation*}
			d_x^T\left(\nabla^2f(x^*)+\sum\limits_{i=1}^m\lambda_i\nabla^2g_i(x^*)+\sum\limits_{i=1}^p\mu_i\nabla^2h_i(x^*)\right)d_x>0
		\end{equation*}
		hold for all $d_x\in \mathcal{C}_\mathcal{X}(x^*)$ with $d_x\not=0$ and all M-stationary multipliers $(\lambda,\mu,\gamma)$ of $x^*$. 
		Then there exists $r >0$ such that for all sequences $(t_k)_k \downarrow 0$ for all $k$ sufficiently large NLP($t_k$) has a KKT point $(x^k,y^k)$ with $x^k \in B_r(x^*)$ and $x^k \to x^*$.
	\end{enumerate}
\end{theorem}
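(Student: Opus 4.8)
The plan is to read off parts~(a) and~(b) from the results already recalled in this section, and then feed their conclusions into the uniqueness statements of Section~\ref{sec:2nd-order} to obtain~(c) and~(d). For~(a), Theorem~\ref{thm:reg-scholtes-solutions} (in case~(a) or case~(b) of that theorem, according to which hypothesis is made) provides a radius and a threshold so that NLP($t$) has a local minimum $(x^t,y^t)$ with $x^t$ in a small ball about $x^*$; since this hypothesis is inherited by smaller balls, the radius may be shrunk below the one furnished by Theorem~\ref{thm:reg-scholtes-mfcq}, so that standard MFCQ holds at $(x^t,y^t)$ and this local minimum is in fact a KKT point of NLP($t$). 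For~(b), the sequence $(x^k,y^k)_k$ is bounded, since $x^k\in B_r(x^*)$ and $0\leq y^k\leq e$, hence has accumulation points; passing to the limit in the defining (in)equalities of $Z(t_k)$ and using $t_k\downarrow0$ (so that $|x_i^ky_i^k|\leq t_k\to0$ yields $\bar x_i\bar y_i=0$) shows any accumulation point $(\bar x,\bar y)$ lies in $Z$, and choosing $r$ small enough that CC-MFCQ holds at every feasible point near $x^*$ (Remark~\ref{rem:cc-mfcq-neighborhood}) we may apply Theorem~\ref{thm:reg-scholtes-convergence} along the relevant subsequence to conclude that $(\bar x,\bar y)$ is S-stationary for~\eqref{eq:reformulation}.

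The crucial point for~(c) is that an active cardinality constraint, $\|x^*\|_0=\kappa$, forces $I_{00}(x^*,y^*)=\emptyset$: feasibility of $(x^*,y^*)$ gives $y_i^*=0$ on $\supp(x^*)$, and since exactly $n-\kappa$ indices lie outside $\supp(x^*)$ while $\sum_{i\notin\supp(x^*)}y_i^*\geq n-\kappa$ and each $y_i^*\leq1$, necessarily $y_i^*=1$ off $\supp(x^*)$; in particular no index has both $x_i^*=0$ and $y_i^*=0$, and moreover $y^*$ is the unique $y$ with $(x^*,y)\in Z$. Because $I_{00}(x^*,y^*)=\emptyset$, the S-stationary and M-stationary multipliers of $(x^*,y^*)$ coincide, so the second order hypothesis of~(c) is exactly the hypothesis of Theorem~\ref{thm:cc-m-isolated} (whose CC-CPLD requirement follows from CC-MFCQ, which also makes $(x^*,y^*)$ M-stationary since it is a local minimum). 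That theorem yields $\rho>0$ such that every M-stationary point of~\eqref{eq:reformulation} lying in $Z\cap B_\rho(x^*,y^*)$ has $x$-component $x^*$. Now take $r$ below $\rho$ and below the radii from~(a) and~(b), and small enough that $Z\cap(B_r(x^*)\times\R^n)\subseteq B_\rho(x^*,y^*)$ (possible since near $x^*$ feasibility pins $y$ to $y^*$, as above). Part~(a) then gives, for all large $k$, a KKT point $(x^k,y^k)$ of NLP($t_k$) with $x^k\in B_r(x^*)$; part~(b) shows that every accumulation point of any such sequence is S-stationary, hence M-stationary, and lies in $Z\cap B_\rho(x^*,y^*)$, so its $x$-component equals $x^*$; by uniqueness of $y^*$ the only accumulation point is $(x^*,y^*)$, whence $(x^k,y^k)\to(x^*,y^*)$.

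Part~(d) is the same scheme with the cardinality constraint possibly inactive: $x^*$ is only assumed a strict local minimum of~\eqref{eq:ccproblem}, the hypothesis is stated in terms of $\mathcal{C}_{\mathcal{X}}(x^*)$ and of all M-stationary multipliers, and one invokes Corollary~\ref{cor:cc-m-isolated} in place of Theorem~\ref{thm:cc-m-isolated}. Again~(a) supplies KKT points $(x^k,y^k)$ with $x^k\in B_r(x^*)$,~(b) makes every accumulation point an M-stationary point of~\eqref{eq:ccproblem}, and the corollary forces its $x$-component to be $x^*$, so $x^k\to x^*$; the $y$-component need not converge, $y^*$ no longer being unique. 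The only genuinely load-bearing step is the identity $I_{00}(x^*,y^*)=\emptyset$ under an active cardinality constraint, which is precisely what lets the weaker hypothesis of~(c), involving only S-stationary multipliers, unlock the M-stationary uniqueness result; the remaining work consists of limit passages and the bookkeeping of the various radii.
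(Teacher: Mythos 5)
Your proposal is correct and takes essentially the same route as the paper's proof: parts (a) and (b) by combining Theorems \ref{thm:reg-scholtes-solutions}, \ref{thm:reg-scholtes-mfcq}, Remark \ref{rem:cc-mfcq-neighborhood} and Theorem \ref{thm:reg-scholtes-convergence}, and parts (c) and (d) by feeding the resulting S-stationary (hence M-stationary) accumulation points into Theorem \ref{thm:cc-m-isolated} and Corollary \ref{cor:cc-m-isolated}, respectively. Your explicit observation that $\|x^*\|_0=\kappa$ forces $I_{00}(x^*,y^*)=\emptyset$, so that S- and M-stationary multipliers coincide and feasibility near $x^*$ pins $y$ to $y^*$, just spells out details the paper leaves implicit ("S- and M-stationarity coincide", "we can enforce $\bar y = y^*$").
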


\begin{proof}
	(a)\quad By Theorem \ref{thm:reg-scholtes-solutions} the assumptions guarantee the existence of $T > 0$ and $r > 0$ such that for all $t \in (0,T]$ NLP($t$) has a local minimum $(x^t,y^t)$ with $x^t \in B_r(x^*)$.
	Decreasing $T$ and $r$ if necessary we can also use Theorem \ref{thm:reg-scholtes-mfcq}, which guarantees MFCQ for NLP($t$) in $(x^t,y^t)$ and thus ensures that $(x^t,y^t)$ are KKT points.

	\smallskip
	(b)\quad Since $x^k \in B_r(x^*)$ and $y^k \in [0,e]$ for all $k \in \N$ the sequence $(x^k,y^k)$ is bounded and thus has at least one accumulation point.
	Now consider an arbitrary accumulation point $(\bar x, \bar y)$.
	Since $t_k \downarrow 0$ we know that $(\bar x, \bar y)$ is feasible for \eqref{eq:reformulation}.
	If we chose $r > 0$ small enough Remark \ref{rem:cc-mfcq-neighborhood} tells us that CC-MFCQ in $x^*$ implies CC-MFCQ in $(\bar x, \bar y)$.
	Thus by Theorem \ref{thm:reg-scholtes-convergence} $(\bar x, \bar y)$ is an S-stationary point of \eqref{eq:reformulation}.

	\smallskip
	(c)\quad Combining part (a) and (b), we see that there exists $r > 0$ such that for all $t_k > 0$ sufficiently small NLP($t_k$) has a KKT point $(x^k,y^k)$ with $x^k \in B_r(x^*)$ and that all accumulation points $(\bar x, \bar y)$ of $(x^k,y^k)_k$ are S-stationary and thus M-stationary.
	By choosing $r > 0$ small enough, we can enforce $\bar y = y^*$.
	Since $(x^*,y^*)$ is a local minimum satisfying CC-MFCQ, it is an S-stationary point, too.
	Furthermore, since $\|x^*\|_0 = \kappa$, S- and M-stationarity coincide and thus $(x^*,y^*)$ satisfies the assumptions for Theorem \ref{thm:cc-m-isolated}.
	Thus, if we choose $r > 0$ small M-stationarity of the accumulation points $(\bar x, \bar y) = (\bar x, y^*)$ implies $\bar x = x^*$.
	This shows $x^k \to x^*$ and  $y^k \to y^*$.

	\smallskip
	(d)\quad Since $x^*$ is a local minimum of \eqref{eq:ccproblem}, every $(x^*,y) \in Z$ is a local minimum of \eqref{eq:reformulation} and, due to CC-MFCQ, an S-stationary and thus M-stationary point.
	Furthermore, the set of M-stationary multipliers is independent from $y$.
	Using the assumptions, we obtain from Corollary \ref{cor:cc-m-isolated} that there exists an $r > 0$ such that all M-stationary points $(\bar x, \bar y) \in Z$ with $\bar x \in B_r(x^*)$ satisfy $\bar x = x^*$.
	Analogously to (c) we see that we can decrease $r > 0$ such that for all $t_k > 0$ sufficiently small NLP($t_k$) has a KKT point $(x^k,y^k)$ with $x^k \in B_r(x^*)$ and that all accumulation points $(\bar x, \bar y)$ of $(x^k,y^k)_k$ are S-stationary and thus M-stationary.
	Consequently all accumulation points satisfy $\bar x = x^*$ which shows $x^k \to x^*$. \qed
\end{proof}

Note that the second order condition in part \ref{thm:reg-scholtes-x-unique:part-sosc-xy} and \ref{thm:reg-scholtes-x-unique:part-sosc-x} is automatically satisfied if $f$ is uniformly convex, $g$ convex and $h$ affine linear.
Furthermore, in part \ref{thm:reg-scholtes-x-unique:part-sosc-xy} the additional assumption $\vn{x^*}_0=\kappa$ implies by \cite[Theorem 3.6.]{BKS2015} that the vector $x^*$ is a strict local minimum of the cardinality constraint problem \eqref{eq:ccproblem}.
Combining this with a few other previously used arguments, once can alternatively prove part \ref{thm:reg-scholtes-x-unique:part-sosc-xy} by showing that it is implied by part \ref{thm:reg-scholtes-x-unique:part-sosc-x}.

\section{Conclusion}

We discussed a reformulation of cardinality constrained optimization problems using continuous auxiliary variables.
Our article contains three main results on second order conditions for this reformulation:
Second order necessary and sufficient optimality conditions for S-stationary points, as well as a uniqueness result for M-stationary points.
All of these second order conditions capture the lack of curvature of the objective function regarding the auxiliary variable. 
The second order sufficient optimality condition can be used both to verify optimality of candidate solutions as well as to improve the convergence theory of numerical methods such as the discussed Scholtes-type regularization.
Thus, the provided second order results expand the set of optimality conditions for the continuous reformulation of cardinality constrained optimization problems.

Moreover we considered a Scholtes-type regularization to compute S-sta\-tio\-nary points.
Using the previously derived second order conditions we showed two main results:
The existence of local solutions of the regularized programs and a uniqueness result for the limit points.
These extend the existing convergence theory of the Scholtes-type regularization for the continuous reformulation of cardinality constrained optimization problems.
Additionally, we complemented the theoretical results by an example illustrating why the Scholtes-type regularization typically does not get stuck in undesirable local solutions of the continuous reformulation.

\subsubsection*{Acknowledgement}

The work of Alexandra Schwartz and Max Bucher is supported by the 'Excellence Initiative' of the German Federal and State Governments and the Graduate School of Computational Engineering at Technische Universit{\"a}t Darmstadt.
\bibliographystyle{plain}

\begin{thebibliography}{10}

\bibitem{Adam2016}
Luk\'{a}\v{s} Adam and Martin Branda.
\newblock {Nonlinear chance constrained problems: optimality conditions,
  regularization and solvers}.
\newblock {\em Journal of Optimization Theory and Applications},
  170(2):419--436, 2016.

\bibitem{BE2013}
Amir Beck and Yonina~C. Eldar.
\newblock {Sparsity Constrained Nonlinear Optimization: Optimality Conditions
  and Algorithms}.
\newblock {\em SIAM Journal on Optimization}, 23(3):1480--1509, 2013.

\bibitem{BS2009}
Dimitris Bertsimas and Romy Shioda.
\newblock {Algorithm for cardinality-constrained quadratic optimization}.
\newblock {\em Computational Optimization and Applications}, 43(1):1--22, 2009.

\bibitem{B96}
Daniel Bienstock.
\newblock {Computational study of a family of mixed-integer quadratic
  programming problems}.
\newblock {\em Mathematical Programming}, 74(2):121--140, 1996.

\bibitem{BBCS2017}
Martin Branda, Max Bucher, Michal \v{C}ervinka, and Alexandra Schwartz.
\newblock {Convergence of a Scholtes-type Regularization Method for
  Cardinality-Constrained Optimization Problems with an Application in Sparse
  Robust Portfolio Optimization}.
\newblock {\em arXiv preprint arXiv:1703.10637}, 2017.

\bibitem{BKS2015}
Oleg~P. Burdakov, Christian Kanzow, and Alexandra Schwartz.
\newblock {Mathematical Programs with Cardinality Constraints: Reformulation by
  Complementarity-Type Conditions and a Regularization Method}.
\newblock {\em SIAM Journal on Optimization}, 26(1):397--425, 2016.

\bibitem{CW2008}
E.J. Candes and M.B. Wakin.
\newblock {An Introduction To Compressive Sampling}.
\newblock {\em Signal Processing Magazine, IEEE}, 25(2):21--30, March 2008.

\bibitem{CWZ2016}
F.~E. Curtis, A.~W\"{a}chter, and V.~M. Zavala.
\newblock {A Sequential Algorithm for Solving Nonlinear Optimization Problems
  with Chance Constraints}.
\newblock Technical Report 16T-012, COR@L Laboratory, Department of ISE, Lehigh
  University, 2016.

\bibitem{FMPSW2013}
Mingbin Feng, John~E Mitchell, Jong-Shi Pang, Xin Shen, and Andreas
  W\"{a}chter.
\newblock {Complementarity formulations of l0-norm optimization problems}.
\newblock {\em Industrial Engineering and Management Sciences. Technical
  Report. Northwestern University, Evanston, IL, USA}, 2013.

\bibitem{GK2013}
Dinakar Gade and Simge K\"{u}\c{c}\"{u}kyavuz.
\newblock {Formulations for dynamic lot sizing with service levels}.
\newblock {\em Naval Research Logistics (NRL)}, 60(2):87--101, 2013.

\bibitem{Galati2009}
Matthew Galati.
\newblock {Decomposition Methods for Integer Linear Programming}, 2010.
\newblock PhD Thesis.

\bibitem{GLY2013}
Lei Guo, Gui-Hua Lin, and Jane~J. Ye.
\newblock {Second-Order Optimality Conditions for Mathematical Programs with
  Equilibrium Constraints}.
\newblock {\em Journal of Optimization Theory and Applications}, 158(1):33--64,
  2013.

\bibitem{HK2007}
Tim Hoheisel and Christian Kanzow.
\newblock {First-and second-order optimality conditions for mathematical
  programs with vanishing constraints}.
\newblock {\em Applications of Mathematics}, 52(6):495--514, 2007.

\bibitem{LW2014}
Po-Ling Loh and Martin~J Wainwright.
\newblock {Support recovery without incoherence: A case for nonconvex
  regularization}.
\newblock {\em arXiv preprint arXiv:1412.5632}, 2014.

\bibitem{LLRSS2012}
D.~Di Lorenzo, G.~Liuzzi, F.~Rinaldi, F.~Schoen, and M.~Sciandrone.
\newblock {A concave optimization-based approach for sparse portfolio
  selection}.
\newblock {\em Optimization Methods and Software}, 27(6):983--1000, 2012.

\bibitem{LPR1996}
Zhi-Quan Luo, Jong-Shi Pang, and Daniel Ralph.
\newblock {\em {Mathematical programs with equilibrium constraints}}.
\newblock Cambridge Univ. Press, Cambridge, 1996.

\bibitem{Miller2002}
A.~Miller.
\newblock {\em {Subset Selection in Regression}}.
\newblock Chapman \& Hall/CRC Press, 2nd ed. edition, 2002.

\bibitem{MuSh2011}
Walter Murray and Howard Shek.
\newblock {A local relaxation method for the cardinality constrained portfolio
  optimization problem}.
\newblock {\em Computational Optimization and Applications}, 53(3):681--709,
  2012.

\bibitem{NW2006}
Jorge Nocedal and Stephen~J. Wright.
\newblock {\em {Numerical optimization}}.
\newblock {Springer series in operations research and financial engineering}.
  Springer, New York [u.a.], 2. ed. edition, 2006.

\bibitem{OKZ1998}
J.V. Outrata, M.~Ko\v{c}vara, and J.~Zowe.
\newblock {\em {Nonsmooth Approach to Optimization Problems with Equilibrium
  Constraints}}.
\newblock {Nonconvex Optimization and its Applications}. Kluwer Academic
  Publishers, 1998.

\bibitem{PXF2017}
LiLi Pan, NaiHua Xiu, and Jun Fan.
\newblock {Optimality conditions for sparse nonlinear programming}.
\newblock {\em Science China Mathematics}, 60(5):759--776, 2017.

\bibitem{SS2000}
H.~Scheel and S.~Scholtes.
\newblock {Mathematical programs with complementarity constraints:
  Stationarity, optimality, and sensitivity}.
\newblock {\em Mathematics of Operations Research}, 25(1), 2000.

\bibitem{S2001}
Stefan Scholtes.
\newblock {Convergence Properties of a Regularization Scheme for Mathematical
  Programs with Complementarity Constraints}.
\newblock {\em SIAM Journal on Optimization}, 11(4):918--936, 2001.

\bibitem{SU2010}
Sonja Steffensen and Michael Ulbrich.
\newblock {A New Relaxation Scheme for Mathematical Programs with Equilibrium
  Constraints}.
\newblock {\em SIAM Journal on Optimization}, 20(5):2504--2539, 2010.

\bibitem{CKS2016}
Michal \v{C}ervinka, Christian Kanzow, and Alexandra Schwartz.
\newblock {Constraint qualifications and optimality conditions for optimization
  problems with cardinality constraints}.
\newblock {\em Mathematical Programming}, 160(1-2):353--377, 2016.

\bibitem{EWS2003}
Jason Weston, Andr\'{e} Elisseeff, Bernhard Sch\"{o}lkopf, and Pack Kaelbling.
\newblock {The use of zero-norm with linear models and kernel methods}.
\newblock {\em Journal of Machine Learning Research}, pages 1439--1461, 2003.

\bibitem{GY2016}
Ganzhao Yuan and Bernard Ghanem.
\newblock {Sparsity constrained minimization via mathematical programming with
  equilibrium constraints}.
\newblock {\em arXiv preprint arXiv:1608.04430}, 2016.

\end{thebibliography}

\end{document}